\theoremstyle{definition}
\newtheorem{ejem}{Example}[section]
\newtheorem{defi}{Definition}[section]
\theoremstyle{plain}
\newtheorem{teo}{Theorem}[section]
\newtheorem{prop}[teo]{Proposition}
\newtheorem{lema}[teo]{Lemma}
\newtheorem{corol}[teo]{Corollary}
\newcommand{\re}{\mathbb{R}}
\newcommand{\nat}{\mathbb{N}}
\newcommand{\sgn}{\operatorname{sign}}
\newcommand{\SSh}{\mathcal{S}}
\title{On the Cauchy problem associated to
 a regularized Benjamin-Ono -- Zakharov–Kuznetsov (rBO-ZK) type equation.}
\author{Fabián Sánchez S. and F\'elix H. Soriano M.}
\begin{document}\maketitle
\begin{abstract}
  In this work we shall study the well-posedness and ill-posedness of
  the Cauchy problem associated to the equation
\begin{equation*} 
u_{t}+a(u^{n})_{x}+(b\mathscr{H} u_{t}+u_{yy})_{x}=0, 
\end{equation*} 
in anisotropic weigthed Sobolev spaces. In particular, we examine the
unique continuation property for solutions, of this, with polynomial
decay.
\end{abstract}
\section{Introduction} 
The non-linear evolution equations play a very important role in
different areas of science and engineering. It is worth mentioning
some of them: fluid mechanics, plasma physics, fiber optics, solid
state physics, chemical kinetics, chemical physics and geochemistry,
among others. So, starting from the study of their solutions, it is
understood the effects of dispersion, diffusion, reaction and
convection associated with the models described by these. For example,
Korteweg-de Vries equation 
\begin{equation}\label{KdV}
u_{t}=u_{xxx}+uu_{x} \qquad(x,t)\in \mathbb{R}^2,
\end{equation}
that models the behavior of waves in shallow water channels, has
solitary waves as solutions which behave as particles, in the sense
that after they collide the shape and velocity are retained, which was
the reason to Kruskal and Zabusky coined the term \emph{soliton} in
their work of 1965 (see \cite{zbkru}). These solitons are stable, in
the sense that if a solution initially differs little in shape to
soliton type solutions, along the time, this will maintain its shape
to differ very little from that of the solution type soliton (see
\cite{Ben72} and \cite{Bona75}); indeed, this solution eventually
takes the soliton shape (see \cite{pewe}). For practical purposes the
notion stability of solitons guarantees, taking meticulous care, that in the
laboratory it can be reproduced these. It was first done by J. Scott Russel
in 1934.\par The Benjamin-Bona-Mahony equation
\begin{equation}\label{BBM}
u_{t}+u_{x}+uu_{x}-u_{xxt}=0,
\end{equation}
was introduced in \cite{BBM} with the intention to model the
propagation of long waves of small amplitude, where the effect is
purely nonlinear dispersion. The way this was obtained, it was sought
to reach an equation equivalent to the KdV equation
\eqref{KdV}. Interestingly, despite this intention, from the purely
mathematical point of view, these equations have significant and
interesting differences.\par Other one-dimensional equations are, one,
the introduced independently by Benjamin in \cite{Benjamin} and by Ono
in \cite{ono},
\begin{equation}\label{BO}
u_{t}+\mathscr{H}u_{xx}+uu_{x}=0.
\end{equation}
which models the internal waves in stratified fluids deep, where
$\mathscr{H}$ is the Hilbert transform.
The another, is the regularized Benjamin-Ono equation
\begin{equation}\label{rBO}
u_{t}+u_{x}+uu_{x}+\mathscr{H}u_{xt}=0
\end{equation}
where $u=u(x,t)$ is a real valued function, with $x,t \in
\mathbb{R}$. This equation is a model for the evolution in time of
waves with large crests at the interface between two fluids
inmiscibles. \par
There are bidimensional versions extending the above equations. In
the case of the KdV equation we have the Kadomtsev-Petviashvilli
equation, see \cite{KP},
\begin{equation}\label{KP}
(u_{t}+auu_{x}+u_{xxx})_{x}+u_{yy}=0,
\end{equation}
which describes waves in thin films of high surface tension. Another
is the Zakharov-Kuznetsov equation
\begin{equation}\label{ZK}
u_{t}=(u_{xx}+u_{yy})_{x}+uu_{x},
\end{equation}
which arises in the study of geophysical fluid dynamics in isotropic
sets (media in which the characteristics of the bodies does not depend
on the direction) and ion acoustic waves in magnetic plasmas.
\par For the case of the equations BBM \eqref{BBM},  BO \eqref{BO} and
rBO \eqref{rBO} we have the following  bidimensionals versions, the
ZK-BBM equation 
\begin{equation}\label{ZK-BBM}
 u_{t}=(u_{xt}+u_{yy})_{x}+ uu_{x},
\end{equation}
the ZK-BO equation
\begin{equation}\label{ZK-BO}
 u_{t}=(\mathcal Hu_{x}+u_{yy})_{x}+ uu_{x},
\end{equation}
and the rZK-BO equation
\begin{equation}\label{eq:rBOZK}
\begin{cases} 
u_{t}+a(u^{n})_{x}+(b\mathscr{H}u_{t}+u_{yy})_{x}=0, \quad (x,y) \in \re^2 , t>0 \\ u(0,x,y)=\varphi(x,y) \end{cases}
\end{equation} 
\par There are important questions that can be done about these
equations, and whose answers help clarify the phenomena that model
each of these. Let us note the following three questions: the
well-posedness (local and global), the existence and stability of
traveling waves and the unique continuation. In this direction, KdV,
BBM, BO, KP and ZK equations have deserved a comprehensive study.  In
\cite{BonaKalish}, \cite{mammeri} and \cite{Angulo} was studied the
well-posedness of the rBO equation. In \cite{Angulo} it was also
examined the existence and stability of travelling periodic waves of
rBO equation. In \cite{FonsecaGui} was studied the well and
ill-posedness of rBO in weighted Sobolev spaces, in particular, they
obtained a result on unique continuation property of this equation
that shows the no persitence of solutions of this in spaces of
functions with arbitrary decay polinomial. Scarcer is the literature
on the ZK-BBM equation. Of this we can say that, very recently, the
local well-poseness has been studied in \cite{Vega}. The existence and
stability of solitary waves were examined in \cite{Johnson},
\cite{Wazwaz} and \cite {Wazwaz2}. In \cite{Wazwaz} (actually, in the
literature the same author cited in this article) it was proved the
existence of solitons and compactons (solitons with compact
support).\par In this work we shall examine the local well-posedness
of the Cauchy problem \eqref{eq:rBOZK} in the anisotropic Sobolev
spaces
\begin{equation}
  \label{eq:1.1}
H^{s_1,s_2}(\re^2)=\{f \in \SSh' \mid
(1+|\xi|^{s_1}+|\eta|^{s_2})\widehat{f} \in L^2(\re^2)\},  
\end{equation}
for $s_1$ and $s_2$ positive real numbers such that
$\frac1{s_1}+\frac1{s_2}<2$, and the weighted Sobolev spaces
\begin{equation}\label{eq:1.2}
\mathcal{F}_{r_1,r_2}^{s_1,s_2}(\re^2) = H^{s_1,s_2}(\re^2) \cap L^2_{r_1,r_2}(\re^2),
\end{equation}
where
\begin{equation}\label{eq:1.3}
L^2_{r_1,r_2}=\left\{f \in L^2(\re^2) \mid
(|x|^{r_1}+ |y|^{r_2})f \in L^2(\re^2)\right\},
\end{equation}
for $s_2\ge r_2$, $s_2\ge 2r_1$ and $r_1<5/2$.  We shall show,
further, that the unique solution to \eqref{eq:rBOZK} with a
particular condition on initial value and belonging to $\mathcal{ F}_{
  r_1, r_2}^{s_1,s_2}(\re^2)$, for $r_1\ge 5/2$, in two different
times is identically $0$. To do this we follow ideas presented in
\cite{FonsecaGui}, \cite{Fonseca1} and \cite{Fonseca2}.\par The plan
in this paper is the following. In Section 2 we present the
preliminaries results and notations that we will use in the rest of
this work. In section 3, in a first subsection we examine the local
well-posedness of \eqref{eq:rBOZK}, in a second subsection we examine
the unique continuation of its solutions.
\section{Preliminary results}
In this paper we systematically use the following notations.
\begin{enumerate}
\item $\SSh({\re}^n)$  is the Schwartz space. If $n=2$,
  we simply write $\SSh$.
\item $\SSh'({\re}^n)$ is the tempered  distributions.
  If $n=2$, we simply write $\SSh'$.
\item For $f\in{\SSh}'({\re}^n)$, $\widehat{f}$ is the Fourier
  transform of $f$ and $\check{f}$ is inverse Fourier transform
  of $f$. We recall that  $$ \widehat f(\xi) = {(2\pi)^{-\frac n2}}
  \int_{\re^n} f(x)e^{i\langle x,\xi\rangle} dx, $$ for all $\xi\in
  \re^n$, when $f\in \SSh(\re^n)$.
\item $\mathscr H$ is the Hilbert transform (in $x$). If
  $f\in{\SSh}({\re})$,  $$\mathscr H f (x) = \sqrt\frac2\pi \left(\mathrm{v.p.}
  \int_{-\infty}^\infty \frac1{y-x} f(y)\, dy\right). $$
\item For $s\in{\re}$,
$H^s=H^s({\re}^2)$
is the Sobolev space of order $s$.  
\item The inner product in $H^s$ is $\langle f,g\rangle_s=\int_{{\re}^2}(1+\xi^2+\eta^2)^s\widehat{f}\overline{\widehat{g}}d\xi{d}\eta.$
\item For $s_1$ and $s_2\in{\re}$ we denote $H^{s_1,s_2}=H^{s_1,
    s_2}({\re}^2)$ the anisotropic Sobolev space defined as in the
  introduction. 
\item $\Lambda^s=(1-\Delta)^{s/2}$.
\item $L_{p,s}(\re^n)=\{ f\in \SSh'(\re^n) \,\big|\, \Lambda^sf\in L_p(\re^n)\}$. 
\item For $f \in L_{p,s}(\re^2)$, $|f|_{p,s}= \|\Lambda^sf\|_{L_p(\re^2)}$.
\item $[A,B]$ denotes the commutator of $A$ and $B$.
\end{enumerate}
 
In this section we present some preliminaries concepts and results
that we will use as important inputs in this work. Let us see.
\subsection{Anisotropic weighted Sobolev spaces}
In most of this work we move within the context of anisotropic Sobolev
spaces $H^{s_1,s_2}(\re^2)$ and anisotropic weighted Sobolev spaces
$\mathcal F_{r_1,r_2}^{s_1,s_2}(\re^2)$ (see \eqref{eq:1.1} and
\eqref{eq:1.2} for their definition). In these spaces we have the
following version of the Sobolev lemma.
\begin{lema}[Sobolev]\label{lemsob}
  Let $s_1$ and $s_2$ be positive real numbers such that $\frac1{s_1}
  + \frac1{s_2}< 2$. Then, $H^{s_1,s_2}(\re^2)\subset
  C_\infty(\re^2)$ (the set of continuous functions in $\re^2$
  vanishing at infinity), with continuous inclusion.
\end{lema}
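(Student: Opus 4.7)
The plan is to show that functions in $H^{s_1,s_2}(\re^2)$ have integrable Fourier transforms, from which continuity, vanishing at infinity, and the continuity of the inclusion follow by the Fourier inversion formula together with the Riemann--Lebesgue lemma.

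Concretely, for $f\in H^{s_1,s_2}(\re^2)$ I would write
\[
\int_{\re^2} |\widehat f(\xi,\eta)|\,d\xi\,d\eta
= \int_{\re^2} \frac{(1+|\xi|^{s_1}+|\eta|^{s_2})\,|\widehat f(\xi,\eta)|}{1+|\xi|^{s_1}+|\eta|^{s_2}}\,d\xi\,d\eta
\]
and apply Cauchy--Schwarz. The second factor is (equivalent to) $\|f\|_{H^{s_1,s_2}}$ by the very definition \eqref{eq:1.1}, so everything reduces to the finiteness of
\[
I(s_1,s_2):=\int_{\re^2}\frac{d\xi\,d\eta}{(1+|\xi|^{s_1}+|\eta|^{s_2})^2}.
\]

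The core technical step is to estimate $I(s_1,s_2)$. I would reduce to the first quadrant by symmetry, substitute $u=\xi^{s_1}$, $v=\eta^{s_2}$ (so $d\xi\,d\eta = (s_1s_2)^{-1}u^{1/s_1-1}v^{1/s_2-1}du\,dv$), and then integrate in $v$ first via the change of variables $v=(1+u)w$. This yields a Beta-type integral $\int_0^\infty w^{1/s_2-1}(1+w)^{-2}\,dw$, which converges because $0<1/s_2<2$ (the inequality $1/s_2<2$ follows automatically from the hypothesis $1/s_1+1/s_2<2$ since $1/s_1>0$; symmetrically $s_1>1/2$). After this, one is left with $\int_0^\infty u^{1/s_1-1}(1+u)^{1/s_2-2}\,du$, whose integrability at $0$ uses $1/s_1>0$ and whose integrability at infinity demands the exponent $1/s_1+1/s_2-3<-1$, i.e.\ precisely the hypothesis $1/s_1+1/s_2<2$. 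This is where the sharp condition on $(s_1,s_2)$ is consumed and is the only real obstacle in the argument.

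Once $I(s_1,s_2)<\infty$ is established, Cauchy--Schwarz gives
\[
\|\widehat f\|_{L^1(\re^2)}\le I(s_1,s_2)^{1/2}\,\|f\|_{H^{s_1,s_2}},
\]
so $\widehat f \in L^1(\re^2)$. By the Fourier inversion formula, $f$ coincides almost everywhere with the continuous function $(2\pi)^{-1}\int \widehat f(\xi,\eta)e^{-i(x\xi+y\eta)}d\xi\,d\eta$, and the Riemann--Lebesgue lemma applied to $\widehat f$ ensures that this representative belongs to $C_\infty(\re^2)$. The pointwise bound $\|f\|_\infty \le (2\pi)^{-1}\|\widehat f\|_{L^1}\le C\,\|f\|_{H^{s_1,s_2}}$ then gives continuity of the inclusion $H^{s_1,s_2}(\re^2)\hookrightarrow C_\infty(\re^2)$, completing the proof.
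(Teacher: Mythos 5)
Your proof is correct and takes essentially the same approach as the paper: both reduce the claim via Cauchy--Schwarz to the square-integrability of $\left(1+|\xi|^{s_1}+|\eta|^{s_2}\right)^{-1}$ and verify that this holds precisely when $\frac1{s_1}+\frac1{s_2}<2$. The only difference is the elementary evaluation of that integral (you use Beta-function substitutions where the paper uses a pointwise comparison and iterated integration via Tonelli), and you spell out the concluding Fourier-inversion and Riemann--Lebesgue step that the paper leaves implicit.
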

\begin{proof}
  In view of $$ \widehat f=\left( (1+|\xi|^{s_1} + |\eta|^{s_2} \right)^{-1}
  \left( (1+|\xi|^{s_1} + |\eta|^{s_2} \right) \widehat f,$$ it is
  enough to see that $\left(1+|\xi|^{s_1} + |\eta|^{s_2} \right)^{-1} \in
  L^2(\re^2)$, or, in other words, $\left(1+|\xi|^{s_1} +
    |\eta|^{s_2} \right)^{-2} $ is integrable. Now, since 
 $$\left(1+|\xi|^{s_1} + |\eta|^{s_2} \right)^{-1}\simeq \left(
    1+|\xi|+ |\eta|^{s_2/s_1} \right)^{-2s_1}$$ and $$
  \int_{-\infty}^\infty \left( 1+|\xi|+ |\eta|^{s_2/s_1}
  \right)^{-2s_1}\,d\xi = \frac2{2s_1-1} \left( 1+ |\eta|^{s_2/s_1}
  \right)^{-2s_1+1},$$ from Tonelli theorem we have that $\left(
    1+|\xi|^{s_1} + |\eta|^{s_2} \right)^{-1}$ is integrable if
  (and only if) $$\frac1{s_1} + \frac1{s_2}< 2.$$
\end{proof}
Before considering other properties of these spaces, we enunciate the
following two results about commutators of operators that are part of the
important stock of tools used in mathematical analysis.\par
\begin{prop}[Kato-Ponce inequality]\label{deskatoponce}
  Let $s>{0}$, $1<p<\infty$, $\Lambda=(1-\Delta^2)^{1/2}$ and $M_f$
  the multiplication operator by $f$. Then,
\begin{equation}\label{lamilagrosa2}
\left|[\Lambda^{ s}, M_f] g\right |_{p}\le c\left( |\nabla
f|_{\infty} |\Lambda^{ s-1}g|_{p}+ |\Lambda^{ s}f|_{p}| g|_{\infty}
\right),
\end{equation}
for all $f$ and $g\in \SSh(\re^n)$
\end{prop}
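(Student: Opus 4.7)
The plan is to prove this Kato--Ponce commutator estimate via a Littlewood--Paley paraproduct decomposition, in the spirit of Coifman--Meyer. Let $\{\Delta_j\}_{j\ge -1}$ denote an inhomogeneous dyadic frequency decomposition with low-frequency cutoff $S_j=\sum_{k<j}\Delta_k$, and use Bony's paraproduct
\[
fg = T_f g + T_g f + R(f,g),
\]
where $T_f g=\sum_j S_{j-2}f\cdot\Delta_j g$ and $R(f,g)=\sum_{|j-k|\le 1}\Delta_j f\cdot\Delta_k g$. Subtracting the analogous decomposition of $f\cdot\Lambda^s g$ splits $[\Lambda^s,M_f]g$ into three contributions: a low--high commutator $[\Lambda^s,T_f]g$, a high--low piece $\Lambda^s(T_g f)-T_{\Lambda^s g}f$, and a high--high piece $\Lambda^s R(f,g)-R(f,\Lambda^s g)$.

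I would first dispatch the two non-commutator pieces. In $T_g f$ and $R(f,g)$ the high frequency is always carried by $f$, so the relevant bilinear symbols are smooth away from the origin and satisfy Mihlin--H\"ormander type conditions. Bounding $S_{j-2}g$ pointwise by $c\,|g|_\infty$ (via a fixed $L^1$ convolution kernel) and using Bernstein's inequality to move derivatives onto $\Delta_j f$, the Fefferman--Stein vector-valued maximal inequality together with the Littlewood--Paley characterisation of $L^p$-Sobolev norms yields for both pieces the bound $c\,|g|_\infty\, |\Lambda^s f|_p$.

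The actual content of the estimate lies in the low--high commutator $[\Lambda^s,T_f]g=\sum_j[\Lambda^s,S_{j-2}f]\Delta_j g$. Here the spectrum of $S_{j-2}f$ is much smaller than that of $\Delta_j g\sim 2^j$, so a first-order Taylor expansion of the symbol $\langle\xi+\eta\rangle^s-\langle\eta\rangle^s$ around $\xi=0$ produces, for each frequency block, a bilinear multiplier of order $s-1$ acting on $(\nabla f, g)$. Applying Fefferman--Stein once more bounds the associated square function by $c\,|\nabla f|_\infty\,|\Lambda^{s-1}g|_p$. The main obstacle is controlling the Taylor remainder: it must remain of order $s-1$ rather than $s$, which is precisely what the frequency gap $2^{j-2}\ll 2^j$ between $S_{j-2}f$ and $\Delta_j g$ guarantees, so that the Mihlin conditions on each residual symbol are uniform in $j$ and the summation converges to yield the stated inequality.
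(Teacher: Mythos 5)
The paper does not actually prove this proposition: it is stated without proof as the classical Kato--Ponce commutator estimate (the paper simply quotes it as a standard tool, in the same spirit as its later citation of \cite{katoponce} for the fractional Leibniz rule), so there is no in-paper argument to compare against. Your paraproduct sketch is the standard modern route to this inequality and its architecture is sound: splitting $[\Lambda^s,M_f]g$ via Bony's decomposition into the low--high commutator $[\Lambda^s,T_f]g$, the high--low piece $\Lambda^s(T_gf)-T_{\Lambda^sg}f$, and the high--high piece $\Lambda^sR(f,g)-R(f,\Lambda^sg)$ is exactly how one isolates the term $|\nabla f|_\infty|\Lambda^{s-1}g|_p$ (from the first-order Taylor expansion of $\langle\xi+\eta\rangle^s-\langle\eta\rangle^s$ in the low--high regime) from the term $|g|_\infty|\Lambda^sf|_p$ (where $f$ carries the high frequency). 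This differs in flavour from Kato and Ponce's original argument, which works more directly with the Coifman--Meyer multiplier theorem and an explicit symbol expansion rather than a full dyadic paraproduct, but the two are close relatives and yield the same estimate. Two points in your sketch are asserted rather than carried out and deserve a word in a complete write-up: (i) for the high--high remainder $\Lambda^sR(f,g)$ the output frequency can be much lower than the input frequencies, and the hypothesis $s>0$ is what makes the resulting geometric sum over output blocks converge --- this is where $s>0$ genuinely enters; (ii) the Taylor remainder in the low--high piece is a priori of order two in $\xi$ and order $s-2$ in $\eta$, and one must use $|\xi|\lesssim|\eta|$ on the support together with a Coifman--Meyer type bilinear multiplier theorem (with constants uniform in $j$) to recast it as $\nabla f$ paired with an operator of order $s-1$; your remark about the frequency gap is the right idea, but the uniform multiplier bounds are the actual work. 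Neither point is a gap in the strategy, only in the level of detail.
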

\begin{corol}\label{needglob}
For $f$ and $g\in \SSh(\re^n)$,
$$|fg|_{s,p}\le c\left( |f|_{\infty} |\Lambda^{ s}g|_{p}+ |\Lambda^{
    s}f|_{p}| g|_{\infty} \right).
$$
\end{corol}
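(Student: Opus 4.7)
The plan is to exploit the algebraic identity
\[
\Lambda^{s}(fg) \;=\; [\Lambda^{s}, M_f]\,g \;+\; M_f\,\Lambda^{s} g,
\]
which separates the ``non-Leibniz'' part of $\Lambda^{s}$ (the commutator) from the part that obeys a trivial product rule. Taking $L^p$ norms and using the triangle inequality, the corollary will follow once each summand is bounded by the right-hand side of the claimed inequality.

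First I would handle the easy term $M_f\,\Lambda^{s} g = f\,\Lambda^{s} g$: by H\"older's inequality,
\[
\ns{f\,\Lambda^{s} g}{L^p(\re^n)} \;\le\; |f|_{\infty}\,|\Lambda^{s} g|_p,
\]
which is already one of the two terms on the right-hand side of the corollary. For the commutator term I would invoke Proposition \ref{deskatoponce} directly, giving
\[
\bigl|[\Lambda^{s}, M_f]g\bigr|_p \;\le\; c\bigl(|\nabla f|_\infty\,|\Lambda^{s-1}g|_p \;+\; |\Lambda^{s}f|_p\,|g|_\infty\bigr).
\]
The second term here is already of the desired form, so only the first term needs further work.

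The main obstacle, and the only nontrivial step, is controlling $|\nabla f|_\infty\,|\Lambda^{s-1}g|_p$ by the two products $|f|_\infty|\Lambda^{s}g|_p$ and $|\Lambda^{s}f|_p|g|_\infty$. My approach is to apply Gagliardo-Nirenberg type interpolation inequalities of the form
\[
|\nabla f|_\infty \;\le\; c\,|f|_{\infty}^{1-1/s}\,|\Lambda^{s} f|_p^{\,1/s},
\qquad
|\Lambda^{s-1}g|_p \;\le\; c\,|g|_{\infty}^{1/s}\,|\Lambda^{s} g|_p^{\,1-1/s},
\]
valid (for appropriate $p$) when $s\ge 1$; for $0<s<1$ the first term can be dropped from the Kato--Ponce estimate altogether. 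Multiplying these two interpolation bounds and applying Young's inequality $ab\le \frac{a^r}{r}+\frac{b^{r'}}{r'}$ with the conjugate exponents determined by the interpolation weights then yields
\[
|\nabla f|_\infty\,|\Lambda^{s-1}g|_p \;\le\; c\bigl(|f|_\infty\,|\Lambda^{s}g|_p \;+\; |\Lambda^{s}f|_p\,|g|_\infty\bigr).
\]

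Combining the three estimates concludes the proof. The delicate point is really only the interpolation step: one must check that the target space $L^p$ and the order $s$ fall within the range where the Gagliardo-Nirenberg inequality applies, and that the exponents in Young's inequality are chosen so that the two desired products appear with the correct homogeneities. Once that bookkeeping is done, the rest is mechanical.
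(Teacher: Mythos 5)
Your decomposition $\Lambda^{s}(fg)=[\Lambda^{s},M_f]g+f\Lambda^{s}g$ and the H\"older bound on $f\Lambda^{s}g$ are fine, but the step that carries all the weight --- absorbing $|\nabla f|_{\infty}\,|\Lambda^{s-1}g|_{p}$ into the right-hand side --- rests on interpolation inequalities that are not true as written. Both proposed estimates fail a scaling test. For the first: with $f_\lambda(x)=f(\lambda x)$ one has $|\nabla f_\lambda|_\infty=\lambda|\nabla f|_\infty$, $|f_\lambda|_\infty=|f|_\infty$, and $|\Lambda^{s}f_\lambda|_p\sim\lambda^{s-n/p}$ for large $\lambda$, so the claimed bound would force $\lambda\lesssim\lambda^{1-n/(sp)}$, which is false for any finite $p$. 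For the second, a spread-out bump ($\lambda\to0$) gives $|\Lambda^{s-1}g_\lambda|_p\sim\lambda^{-n/p}$ against a right-hand side of order $\lambda^{-n/p+n/(sp)}$, again a contradiction. The correct Gagliardo--Nirenberg bookkeeping, with $L^\infty$ on one side and $L^p$ on the other, pins the exponent to $\theta=1/(s-n/p)$ and requires $s>1+n/p$, a restriction nowhere present in the statement (which holds for all $s>0$, $1<p<\infty$). The auxiliary claim that the $|\Lambda^{s-1}g|_p$ term ``can be dropped'' when $0<s<1$ also does not follow from Proposition \ref{deskatoponce} as stated (that refinement is a separate result, cf.\ Theorem \ref{teo:leib} for the homogeneous derivative).

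For comparison: the paper offers no proof at all --- this is the Kato--Ponce product (fractional Leibniz) estimate, which in \cite{katoponce} is proved alongside, not deduced from, the commutator estimate; both rest on the same Coifman--Meyer/paraproduct machinery. So the honest routes are either to cite it as an independent result of \cite{katoponce}, or to prove it directly by a paraproduct decomposition $fg=T_fg+T_gf+R(f,g)$, estimating $\Lambda^{s}T_fg$ by $|f|_\infty|\Lambda^{s}g|_p$ and symmetrically for the other pieces. Your strategy of funneling everything through the commutator bound cannot work at this level of generality, because the commutator estimate genuinely contains the extra quantity $|\nabla f|_\infty$, which is not controlled by $|f|_\infty$ and $|\Lambda^{s}f|_p$ alone.
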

Also we need the following lemma.
\begin{prop}\label{conmutador}
If $f \in H^{1/2}(\re)$ and $\rho \in C_0^{\infty}(\re)$, then
\begin{equation}
\|[D_x^{1/2},\rho ]f\|_{L^2(\re)} \leq C \| \widehat{D_x^{1/2}\rho }\|_{L^1(\re)} \|f \|_{L^2(\re)}.
\end{equation}
\end{prop}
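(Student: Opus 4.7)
The plan is to work on the Fourier side, using Plancherel to reduce the $L^2$-norm of the commutator to an $L^2$-norm of a convolution-type expression, and then invoke Young's inequality.

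First I would write the commutator explicitly. Since $\widehat{D_x^{1/2} g}(\xi) = |\xi|^{1/2} \widehat{g}(\xi)$ and the Fourier convention of the paper gives $\widehat{\rho f} = (2\pi)^{-1/2} \widehat{\rho} * \widehat{f}$, one computes
\begin{equation*}
\widehat{[D_x^{1/2},\rho]f}(\xi) = (2\pi)^{-1/2}\int_{\re} \widehat{\rho}(\xi-\eta)\bigl(|\xi|^{1/2}-|\eta|^{1/2}\bigr)\widehat{f}(\eta)\,d\eta.
\end{equation*}
The cancellation between $D_x^{1/2}(\rho f)$ and $\rho\, D_x^{1/2}f$ is precisely the reason the multiplier $|\xi|^{1/2}$ becomes the \emph{difference} $|\xi|^{1/2}-|\eta|^{1/2}$.

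The crucial pointwise estimate is the subadditivity of the square root, which yields
\begin{equation*}
\bigl||\xi|^{1/2}-|\eta|^{1/2}\bigr| \le |\xi-\eta|^{1/2} \qquad \text{for all } \xi,\eta\in\re.
\end{equation*}
(For $\xi,\eta$ of the same sign this is the classical inequality $|a^{1/2}-b^{1/2}|\le |a-b|^{1/2}$ for $a,b\ge 0$; for opposite signs one bounds the larger of $|\xi|^{1/2},|\eta|^{1/2}$ by $|\xi-\eta|^{1/2}$ directly.) Substituting this into the integral and using that $|\xi-\eta|^{1/2}|\widehat{\rho}(\xi-\eta)| = |\widehat{D_x^{1/2}\rho}(\xi-\eta)|$, one obtains
\begin{equation*}
\bigl|\widehat{[D_x^{1/2},\rho]f}(\xi)\bigr| \le (2\pi)^{-1/2}\bigl(|\widehat{D_x^{1/2}\rho}| * |\widehat{f}|\bigr)(\xi).
\end{equation*}

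Finally I would apply Young's convolution inequality in the form $L^1 * L^2 \hookrightarrow L^2$, giving
\begin{equation*}
\bigl\|\widehat{[D_x^{1/2},\rho]f}\bigr\|_{L^2(\re)} \le (2\pi)^{-1/2} \bigl\|\widehat{D_x^{1/2}\rho}\bigr\|_{L^1(\re)} \bigl\|\widehat{f}\bigr\|_{L^2(\re)},
\end{equation*}
and conclude by Plancherel's theorem. The only non-routine step is the pointwise inequality on $||\xi|^{1/2}-|\eta|^{1/2}|$; everything else is bookkeeping with Fourier constants. No subtlety arises from the assumption $\rho\in C_0^\infty(\re)$ beyond ensuring $\widehat{D_x^{1/2}\rho}\in L^1$, and the hypothesis $f\in H^{1/2}(\re)$ is used only through $\widehat{f}\in L^2(\re)$.
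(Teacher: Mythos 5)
Your proof is correct and follows essentially the same route as the paper's: pass to the Fourier side, bound the commutator symbol by $\bigl||\xi|^{1/2}-|\eta|^{1/2}\bigr|\le|\xi-\eta|^{1/2}$, and conclude with Young's inequality and Plancherel. You supply slightly more detail on the pointwise inequality (which in fact follows in all cases from $|a^{1/2}-b^{1/2}|\le|a-b|^{1/2}$ for $a,b\ge0$ together with the reverse triangle inequality), but the argument is the same.
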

\begin{proof}
Let $f \in H^{1/2}(\re)$. then,
\begin{align*}
|([D^{1/2},\rho]f)^{\wedge}(\xi)| & = \left| \int_{\re} (|\xi|^{1/2}-|\eta|^{1/2}) |\widehat{\rho}(\xi-\eta)\widehat{f}(\eta)| \, d\eta \right|\\
& \leq   \int_{\re} ||\xi-\eta|^{1/2}\widehat{\rho}(\xi-\eta)| |\widehat{f}(\eta)| \, d \eta .
\end{align*}
Thus, from Young inequality and Plancherel theorem it follows the
proposition.
\end{proof}
\begin{corol}\label{anisotropic}
  Let $s_1$ and $s_2$ be such that $\frac1{s_1} + \frac1{s_2}< 2$.
  Then, the space $H^{s_1,s_2}(\re^2)$, with the punctual product of
  functions, is a Banach algebra. In particular,
  \[\|fg\|_{s_1,s_2} \leq c \|f\|_{s_1,s_2}\|g\|_{s_1,s_2}.\]
\end{corol}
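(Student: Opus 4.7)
\emph{Strategy.} My plan is to reduce the two-dimensional algebra inequality to the one-dimensional Kato--Ponce estimate of Corollary \ref{needglob}, applied slice-by-slice in $x$ and then in $y$, and to absorb the resulting $L^\infty$ factors via the Sobolev embedding of Lemma \ref{lemsob}. By density in $H^{s_1,s_2}(\re^2)$ it is enough to treat $f,g\in \SSh(\re^2)$.

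\emph{Key steps.} First I would record the equivalent description
\[
\|f\|_{s_1,s_2}^2 \;\simeq\; \|f\|_{L^2(\re^2)}^2 \;+\; \int_\re\|f(\cdot,y)\|_{H_x^{s_1}(\re)}^2\,dy \;+\; \int_\re\|f(x,\cdot)\|_{H_y^{s_2}(\re)}^2\,dx ,
\]
which follows from the elementary bound $(1+|\xi|^{s_1}+|\eta|^{s_2})^2\asymp (1+|\xi|^{s_1})^2+(1+|\eta|^{s_2})^2$ together with Plancherel in the untouched variable. It therefore suffices to bound each of the three pieces evaluated at $fg$. For a.e.\ fixed $y\in\re$, invoke the one-dimensional case of Corollary \ref{needglob} (with $p=2$, $s=s_1$, $n=1$) to get
\[
\|(fg)(\cdot,y)\|_{H_x^{s_1}}\;\le\; c\bigl(\|f(\cdot,y)\|_{L_x^\infty}\|g(\cdot,y)\|_{H_x^{s_1}}+\|f(\cdot,y)\|_{H_x^{s_1}}\|g(\cdot,y)\|_{L_x^\infty}\bigr).
\]
Squaring, majorizing each slicewise $L_x^\infty$ norm by the global $L^\infty(\re^2)$ norm, integrating in $y$, and using the decomposition above gives
\[
\int_\re\|(fg)(\cdot,y)\|_{H_x^{s_1}}^2\,dy\;\le\; c\bigl(\|f\|_{L^\infty}^2\|g\|_{s_1,s_2}^2+\|g\|_{L^\infty}^2\|f\|_{s_1,s_2}^2\bigr).
\]
Now Lemma \ref{lemsob}, whose hypothesis is exactly $\frac1{s_1}+\frac1{s_2}<2$, yields the continuous embedding $H^{s_1,s_2}\hookrightarrow L^\infty(\re^2)$, and so each term on the right is bounded by $c\,\|f\|_{s_1,s_2}^2\|g\|_{s_1,s_2}^2$. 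The integral involving $y$-derivatives is handled symmetrically, and the $L^2$ piece is immediate from $\|fg\|_{L^2}\le\|f\|_\infty\|g\|_{L^2}$ together with Lemma \ref{lemsob}. Summing the three contributions and taking square roots yields the claimed inequality.

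\emph{Main obstacle.} The delicate point is the legitimacy of the slicewise application of Corollary \ref{needglob}: one must interpret the $n$-dimensional statement as a pointwise-in-$y$ identity (with measurability and integrability of the slice norms, which follow from $f,g\in\SSh$) and then replace the slice $L^\infty$ norms by a \emph{global} constant. This is precisely where Lemma \ref{lemsob} becomes indispensable, and it is also the reason the algebra property requires the hypothesis $\frac1{s_1}+\frac1{s_2}<2$.
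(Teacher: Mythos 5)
Your proposal is correct and follows essentially the same route as the paper: the paper also applies Corollary \ref{needglob} slicewise in $x$ for fixed $y$ (and symmetrically in $y$), integrates, and then invokes Lemma \ref{lemsob} to absorb the $L^\infty$ factors. Your write-up merely makes explicit the norm decomposition and the measurability point that the paper leaves implicit.
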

\begin{proof}
  Observe that if $f$ and $g\in \SSh$, from Corollary \ref{needglob},
  we have, for any $y\in \re$, 
  \begin{multline*}
  \|(1- \partial_x^2)^{s_1/2}(fg)(\cdot,y) \|_{L^2(\re)} \le\\ \le \|f\|_\infty
  \|(1- \partial_x^2)^{s_1/2}(g)(\cdot,y) \|_{L^2(\re)} +
  \|(1- \partial_x^2)^{s_1/2}(f)(\cdot,y) \|_{L^2(\re)} \|g\|_\infty.
  \end{multline*}  
Then,
\begin{multline*}
  \|(1- \partial_x^2)^{s_1/2}(fg) \|_{L^2(\re^2)} \le\\ \le \|f\|_\infty
  \|(1- \partial_x^2)^{s_1/2}(g) \|_{L^2(\re^2)} +
  \|(1- \partial_x^2)^{s_1/2}(f) \|_{L^2(\re^2)} \|g\|_\infty.
  \end{multline*}
Analogously,
\begin{multline*}
  \|(1- \partial_y^2)^{s_2/2}(fg) \|_{L^2(\re^2)} \le\\ \le \|f\|_\infty
  \|(1- \partial_y^2)^{s_2/2}(g) \|_{L^2(\re^2)} +
  \|(1- \partial_y^2)^{s_2/2}(f) \|_{L^2(\re^2)} \|g\|_\infty.
  \end{multline*}
From these two inequalities and Sobolev lemma (Lemma \ref{lemsob}) it
follows the corollary.
\end{proof}
\begin{corol}\label{algban}
  Under the same assumptions of immediately previous corollary, for any
   $r_1$ and $r_2$  positive real numbers,
  $\mathcal{F}_{r_1,r_2}^{s_1,s_2}$ is a Banach algebra.
\end{corol}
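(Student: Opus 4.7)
The plan is to reduce to the two facts already available: Corollary \ref{anisotropic} (which says $H^{s_1,s_2}$ is a Banach algebra), and the Sobolev embedding in Lemma \ref{lemsob} (which gives $\|h\|_\infty\le c\,\|h\|_{s_1,s_2}$ under the hypothesis $\frac1{s_1}+\frac1{s_2}<2$). First I would fix a natural norm on $\mathcal{F}^{s_1,s_2}_{r_1,r_2}$, namely
\[
\|f\|_{\mathcal F}:=\|f\|_{s_1,s_2}+\bigl\|(|x|^{r_1}+|y|^{r_2})f\bigr\|_{L^2(\re^2)},
\]
and note that completeness is automatic, since $\mathcal{F}^{s_1,s_2}_{r_1,r_2}$ is the intersection of two Banach spaces continuously embedded in $\SSh'$.

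Next I would verify sub-multiplicativity. The anisotropic Sobolev factor is handled directly by Corollary \ref{anisotropic}: $\|fg\|_{s_1,s_2}\le c\,\|f\|_{s_1,s_2}\|g\|_{s_1,s_2}\le c\,\|f\|_{\mathcal F}\|g\|_{\mathcal F}$. For the weight, I would write pointwise $|x|^{r_1}(fg)=(|x|^{r_1}f)\,g$ and $|y|^{r_2}(fg)=f\,(|y|^{r_2}g)$, then apply H\"older ($L^2\cdot L^\infty\subset L^2$):
\[
\bigl\||x|^{r_1}(fg)\bigr\|_{L^2}\le \bigl\||x|^{r_1}f\bigr\|_{L^2}\|g\|_\infty,\qquad
\bigl\||y|^{r_2}(fg)\bigr\|_{L^2}\le \|f\|_\infty\bigl\||y|^{r_2}g\bigr\|_{L^2}.
\]
(Alternatively one may split both terms symmetrically; either splitting suffices.) Invoking Lemma \ref{lemsob} to dominate $\|f\|_\infty$ and $\|g\|_\infty$ by the respective $H^{s_1,s_2}$ norms, and then by $\|\cdot\|_{\mathcal F}$, one obtains
\[
\bigl\|(|x|^{r_1}+|y|^{r_2})(fg)\bigr\|_{L^2}\le c\,\|f\|_{\mathcal F}\|g\|_{\mathcal F}.
\]
Adding this estimate to the $H^{s_1,s_2}$ bound yields $\|fg\|_{\mathcal F}\le c\,\|f\|_{\mathcal F}\|g\|_{\mathcal F}$, which is the claim.

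There is no real obstacle here; the statement is essentially a bookkeeping consequence of Corollary \ref{anisotropic} together with the fact that the polynomial weights $|x|^{r_1}$ and $|y|^{r_2}$ are multiplicative operators on the separate factors of the pointwise product $fg$. The only point requiring a line of care is checking that the resulting product $fg$ does lie in $\SSh'$ so that the intersection norm makes sense, but this is automatic since $f,g\in L^\infty$ by the Sobolev embedding, so $fg$ is in particular a bona fide $L^2$ function.
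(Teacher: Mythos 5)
Your proof is correct and follows essentially the same route as the paper: bound the $H^{s_1,s_2}$ part by Corollary \ref{anisotropic}, and bound the weighted $L^2$ part by H\"older together with the Sobolev embedding of Lemma \ref{lemsob}. The only cosmetic difference is that the paper places the whole weight $(|x|^{r_1}+|y|^{r_2})$ on one factor while you split it between $f$ and $g$, which, as you note, is immaterial.
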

\begin{proof}
Let $f$ and $g$ in $\mathcal{F}_{r_1,r_2}^{s_1,s_2}$, then
\begin{align*}
\|fg\|_{\mathcal{F}_{r_1,r_2}^{s_1,s_2}}^2 & = \|fg\|_{s_1,s_2}^2+ \|fg\|_{L_{r_1,r_2}^2}^2\\
& \leq C\|f\|_{s_1,s_2}^2 \|g\|_{s_1,s_2}^2 + \|f\|_{L^{\infty}}^2 \|g\|_{L_{r_1,r_2}^2}^2\\
& \leq  C\|f\|_{s_1,s_2}^2 \|g\|_{\mathcal{F}_{r_1,r_2}^{s_1,s_2}}^2 \\
& \leq C\|f\|_{\mathcal{F}_{r_1,r_2}^{s_1,s_2}}^2 \|g\|_{\mathcal{F}_{r_1,r_2}^{s_1,s_2}}^2
\end{align*} 
\end{proof}
\begin{defi}
  Let $\omega$ be a non-negative locally integrable function on $\re$.
  We say that $\omega$ \emph{satisfies the  $A_p$ condition}, for
  $1<p<\infty$, if there exists a positive real number $C$ such that
\begin{equation}\label{Ap}
 \left ( \frac{1}{|I|}\int_{I} \omega \right )\left ( \frac{1}{|I|}
   \int_{I} \omega^{-\frac1{p-1}} \right )^{p-1} \le C
\end{equation}
for all nonempty open interval $I$ in $\re$.
\end{defi}
\begin{ejem}\label{Ap2}
An immediate example of a function that satisfies $A_p$ condition is
$w(x)=(\gamma+|x|^\alpha)^r$, for $\gamma\ge 0$ and $-1<r\alpha<p-1$.
\end{ejem}  
A very interesting fact about the $A_p$ condition is the following theorem.
\begin{teo}\label{condicionAp}
  $\omega$ satisfies the $A_p$ condition if, and only if,  Hilbert
  transform is a bounded operator on $L^{p}(\omega(x)dx)$. In other
  words, $\omega$ satisfies the  $A_p$ condition if, and only if,
\begin{equation}
\left ( \int_{-\infty}^{\infty}|\mathscr{H}f|^{p} \omega(x)\, dx\right )^{\frac{1}{p}} \leq c^{*}\left ( \int_{-\infty}^{\infty}|f|^{p} \omega(x) \, dx \right )^{\frac{1}{p}}
\end{equation}
for all $f\in L^{p}(\omega(x)dx)$.
\end{teo}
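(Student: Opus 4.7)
The statement is the classical Hunt--Muckenhoupt--Wheeden theorem characterizing the boundedness of the Hilbert transform on weighted $L^p$ spaces by the Muckenhoupt $A_p$ condition. The plan is to split the proof into the easy necessity direction and the harder sufficiency direction, and to recall that the sufficiency rests on two independent ingredients: the boundedness of the Hardy--Littlewood maximal operator on $L^p(\omega\,dx)$ (which is itself equivalent to $\omega\in A_p$) and a pointwise/good-$\lambda$ control of $\mathscr H$ by the maximal operator.

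For necessity, I would take a bounded interval $I=(a,b)$ and the test function $f=\omega^{-1/(p-1)}\chi_I$. A direct estimate of $\mathscr H f(x)$ for $x$ in an interval $I^\ast$ to the right of $I$ and of comparable length shows that $|\mathscr H f(x)|\ge c\,|I|^{-1}\int_I \omega^{-1/(p-1)}$ there. Plugging this lower bound into the assumed weighted $L^p$ boundedness of $\mathscr H$ and rearranging produces exactly the $A_p$ inequality \eqref{Ap}. This is essentially a one-page computation once the geometry of $I$ and $I^\ast$ is chosen.

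For sufficiency, I would proceed in three stages. First, using the $A_p$ hypothesis and a Calderón--Zygmund type decomposition of $\omega$ on the dyadic intervals of $\re$, prove the weak-type $(p,p)$ bound for the Hardy--Littlewood maximal operator $M$ on $L^p(\omega\,dx)$, then upgrade to strong-type $(p,p)$ by Marcinkiewicz interpolation (using that $A_p$ automatically implies $A_{p-\varepsilon}$ for some $\varepsilon>0$, the so-called reverse Hölder / openness property of the $A_p$ classes). Second, establish Coifman's good-$\lambda$ inequality
\[
\omega\bigl(\{|\mathscr H f|>2\lambda,\ Mf\le \gamma\lambda\}\bigr)\le C\gamma^{\delta}\,\omega\bigl(\{|\mathscr H f|>\lambda\}\bigr),
\]
for any $A_\infty$ weight $\omega$ (in particular any $A_p$ weight), which only uses the Calderón--Zygmund kernel bounds satisfied by $\mathrm{p.v.}\,1/x$. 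Third, integrate the good-$\lambda$ inequality against $p\lambda^{p-1}$, absorb the term in $\mathscr H f$ on the left-hand side for $\gamma$ small, and conclude
\[
\|\mathscr H f\|_{L^p(\omega)}\le C\,\|Mf\|_{L^p(\omega)}\le C'\,\|f\|_{L^p(\omega)}.
\]

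The main obstacle, and the only truly nontrivial step, is the good-$\lambda$ inequality: it requires a careful stopping-time/Whitney decomposition of the level set $\{|\mathscr H f|>\lambda\}$ together with the cancellation built into the principal-value kernel of $\mathscr H$, and the translation of the Lebesgue-measure estimate into a $\omega$-measure estimate via the $A_\infty$ property (or equivalently the reverse Hölder inequality for $\omega$). Because this theorem is classical and its proof is long, in the context of this paper I would not reproduce it in full but rather invoke it as a black box, as the theorem is stated; the arguments above are what would be verified if one wished to supply an internally self-contained proof.
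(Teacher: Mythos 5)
The paper states this theorem without proof, treating it as the classical Hunt--Muckenhoupt--Wheeden result, which is exactly what you propose to do in the end by invoking it as a black box. Your sketch of the underlying argument --- necessity by testing $\mathscr{H}$ on $\omega^{-1/(p-1)}\chi_I$, sufficiency via the weighted Hardy--Littlewood maximal bound (using the openness of the $A_p$ classes) combined with Coifman's good-$\lambda$ inequality --- is the standard and correct route, so there is nothing to object to.
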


Key ingredients in this work is given by the properties of the
homogeneous fractional derivative and the Stein derivative (this last
very related to the first). We recall that the fractional derivative is
defined by
 \begin{equation}
D^bf = (2\pi |\xi|^b \widehat{f}\ )^{\vee}.
\end{equation}
for each tempered distribution $f$ such that $|\xi|^b \widehat{f}$ is
also a tempered distribution.  If $0<b<1$ and $1<p< \infty$ we have that
\begin{equation}\label{eq:2.4}
\|D^b(fg)\|_p \leq C( \|g\|_{\infty} \|D^bf\|_p+\|f\|_{\infty}\|D^bg\|_p)
\end{equation}
(for a proof of this last inequality see \cite{katoponce}). Furthermore,
we have the following result proved in \cite{KPV1993}
\begin{teo}\label{teo:leib}
If $0<b<1$, then 
\begin{enumerate}[(i)]
\item For $1<p<\infty$
\begin{equation}\label{bu}
\|D^b(fg)-fD^bg-gD^bf\|_p \leq c\|f\|_{\infty}\|D^bg\|_p
\end{equation}

\item For $1\leq p<\infty$ , $b_1, \, b_2 \, \in \, [0,b]$ with $b_1+b_2=b$ and $p_1, \, p_2 \, \in \, (1,\infty)$ with $\frac{1}{p_1}+\frac{1}{p_2}=\frac1p$
\begin{equation}
\|D^b(fg)-fD^bg-gD^bf\|_p \leq c\|D^{b_1}f\|_{p_1}\|D^{b_2}g\|_{p_2}
\end{equation}

\end{enumerate}
\end{teo}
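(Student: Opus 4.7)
The plan is to derive both estimates from a single pointwise commutator identity. For $0<b<1$ and $f\in\SSh(\re)$, the homogeneous fractional derivative admits the singular-integral representation
\[
D^b f(x) = c_b\,\mathrm{p.v.}\int_\re \frac{f(x)-f(y)}{|x-y|^{1+b}}\,dy,
\]
and, via the algebraic identity
\[
f(x)g(x)-f(y)g(y) = f(x)(g(x)-g(y)) + g(x)(f(x)-f(y)) - (f(x)-f(y))(g(x)-g(y)),
\]
one obtains the pointwise formula
\[
D^b(fg)(x) - f(x)D^b g(x) - g(x)D^b f(x) = -c_b\int_\re \frac{(f(x)-f(y))(g(x)-g(y))}{|x-y|^{1+b}}\,dy,
\]
which will drive everything.

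For part (ii) I would apply Cauchy--Schwarz in $y$, distributing the weight $|x-y|^{-(1+b)}$ as $|x-y|^{-(1/2+b_1)}\cdot|x-y|^{-(1/2+b_2)}$ with $b_1+b_2=b$. This yields the pointwise bound
\[
\bigl|D^b(fg)-fD^b g-gD^b f\bigr|(x)\le C\,\mathcal{D}^{b_1}f(x)\,\mathcal{D}^{b_2}g(x),
\]
where $\mathcal{D}^s h(x)=\left(\int_\re |h(x)-h(y)|^2/|x-y|^{1+2s}\,dy\right)^{1/2}$ is Stein's square function. H\"older's inequality with $1/p=1/p_1+1/p_2$ together with the $L^q$-equivalence $\|\mathcal{D}^s h\|_q\sim\|D^s h\|_q$ valid for $0<s<1$ and $1<q<\infty$ then completes (ii).

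For part (i) the symmetric Cauchy--Schwarz argument is unavailable, since it would place both factors in fractional Sobolev norms rather than isolating $\|f\|_\infty$. The approach I would take is to split the integral in $y$ at a scale $r(x)>0$: on the near piece $|x-y|\le r(x)$ apply Cauchy--Schwarz to obtain $\mathcal{D}^b g(x)$ times a factor controlled by $\|f\|_\infty$ and a positive power of $r(x)$; on the far piece $|x-y|>r(x)$ use the crude bound $|f(x)-f(y)|\le 2\|f\|_\infty$ and dominate the remaining integral by (a fractional version of) the Hardy--Littlewood maximal function of $D^b g$. Optimizing in $r(x)$ and invoking the $L^p$-boundedness of the maximal operator gives the required inequality. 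The main obstacle is precisely this part (i): the symmetric splitting of the weight is too blunt to extract only $\|f\|_\infty$, so the scale-dependent decomposition above (or, as in \cite{KPV1993}, a Littlewood--Paley / paraproduct decomposition of $fg$) is what must do the work, and keeping the constants uniform through the optimization is the delicate step.
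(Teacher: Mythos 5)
The paper offers no proof of this theorem: it is quoted verbatim from Kenig--Ponce--Vega \cite{KPV1993}, so your attempt has to stand on its own. Your pointwise identity
\[
D^b(fg)(x)-f(x)D^bg(x)-g(x)D^bf(x)=-c_b\int_\re\frac{(f(x)-f(y))(g(x)-g(y))}{|x-y|^{1+b}}\,dy
\]
is correct for $0<b<1$, and for part (ii) with $b_1,b_2\in(0,b)$ the Cauchy--Schwarz splitting of the kernel does give the pointwise bound $C\,\mathcal{D}^{b_1}f(x)\,\mathcal{D}^{b_2}g(x)$ followed by H\"older. Even there you should flag two caveats: the endpoints $b_1=0$ or $b_2=0$, which the statement allows, are not covered (the integral $\int|f(x)-f(y)|^2|x-y|^{-1}dy$ defining ``$\mathcal{D}^0f$'' diverges), and the equivalence $\|\mathcal{D}^s h\|_q\sim\|D^s h\|_q$ is only valid in the restricted range $q\ge 2n/(n+2s)$ and, as stated in the paper's Theorem 2.6, is an inhomogeneous equivalence including $\|h\|_q$, so it does not hand you $\|\mathcal{D}^{b_1}f\|_{p_1}\le c\|D^{b_1}f\|_{p_1}$ for free.

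Part (i) is where the proposal genuinely fails, and the obstruction is structural, not a matter of tracking constants through an optimization. On the near piece $|x-y|\le r(x)$, Cauchy--Schwarz with the split $|x-y|^{-(1+b)}=|x-y|^{-1/2}\cdot|x-y|^{-(1/2+b)}$ leaves the factor $\bigl(\int_{|x-y|\le r}|f(x)-f(y)|^2|x-y|^{-1}\,dy\bigr)^{1/2}$, and the only hypothesis on $f$ is $\|f\|_\infty$, which gives $2\|f\|_\infty\bigl(\int_{|x-y|\le r}|x-y|^{-1}\,dy\bigr)^{1/2}=\infty$: an $L^\infty$ bound carries no modulus of continuity, hence no positive power of $r(x)$ can be extracted. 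Tilting the split to make the $f$-factor integrable forces more than $b$ derivatives onto $g$. On the far piece, the available pointwise bound $|g(x)-g(y)|\lesssim|x-y|^{b}\,(MD^bg(x)+MD^bg(y))$ leads to $\int_{|x-y|>r}|x-y|^{-1}\,dy=\infty$, while the crude bound $|g(x)-g(y)|\le|g(x)|+|g(y)|$ produces $g$ rather than $D^bg$ on the right-hand side; either way the far piece does not close. This is precisely why (i) is not accessible from the kernel representation with only $\|f\|_\infty$: the proof in \cite{KPV1993} proceeds by a Littlewood--Paley/paraproduct decomposition of $fg$, in which subtracting $fD^bg$ and $gD^bf$ cancels the dangerous low-high and high-low interactions and the remainder is handled by Coifman--Meyer-type bilinear estimates. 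The alternative you mention parenthetically is the argument that actually works; the scale-dependent decomposition does not.
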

In particular, for $1<p<\infty$, from \eqref{bu} we have the following
inequality that improves \eqref{eq:2.4}
\begin{equation}\label{eq:2.8}
\|D^b(fg)\|_p \leq C(\|gD^bf\|_p +\|f\|_{\infty}\|D^bg\|_p)
\end{equation}
\begin{corol}\label{acotop}
  The operator $B = - \partial_x (1+\mathscr{H}\partial_x)^{-1} $ is
  bounded on $\mathcal{F}_{r_2,r_2}^{s_1,s_2}$, for
  $ r_1< 5/2$.
\end{corol}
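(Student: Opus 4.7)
The strategy is to split the $\mathcal{F}_{r_1,r_2}^{s_1,s_2}$ norm into its $H^{s_1,s_2}$ and $L^2_{r_1,r_2}$ parts and bound each separately. On the Fourier side, $B$ is the multiplier whose symbol is
\[
m(\xi) \;=\; \frac{-i\xi}{1+|\xi|} \;=\; -i\,\mathrm{sgn}(\xi)\,\frac{|\xi|}{1+|\xi|},
\]
which depends only on $\xi$ and is bounded by $1$. Hence $B$ is $L^2$-bounded, commutes with $(1-\partial_x^2)^{s_1/2}$ and $(1-\partial_y^2)^{s_2/2}$, and the Sobolev estimate $\|Bf\|_{s_1,s_2}\le\|f\|_{s_1,s_2}$ is immediate from Plancherel. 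Moreover, since $B$ acts by convolution in $x$ only, it commutes with multiplication by $|y|^{r_2}$, which gives $\||y|^{r_2}Bf\|_{L^2}\le C\||y|^{r_2}f\|_{L^2}$.

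The substantive step is controlling $\||x|^{r_1}Bf\|_{L^2}$. For $r_1<1/2$, I would use the factorization $B=\mathscr{H}-\mathscr{H}(1+|\partial_x|)^{-1}$ (coming from $m=-i\,\mathrm{sgn}(\xi)+i\,\mathrm{sgn}(\xi)(1+|\xi|)^{-1}$): by Example \ref{Ap2}, $|x|^{2r_1}$ is an $A_2$ weight, so Theorem \ref{condicionAp} applied fibrewise in $y$ controls $\||x|^{r_1}\mathscr{H}g\|_{L^2}$ by $\||x|^{r_1}g\|_{L^2}$, and the smoothing factor $(1+|\partial_x|)^{-1}$ makes the remaining piece strictly easier. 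For $1/2\le r_1<5/2$, the $A_p$ theory fails, so I would pass to the $\xi$-side via Plancherel,
\[
\||x|^{r_1}Bf\|_{L^2_x}^{2} \;=\; c\int |D_\xi^{r_1}(m(\xi)\widehat f(\xi))|^{2}\,d\xi,
\]
decompose $r_1=k+\theta$ with $k=\lfloor r_1\rfloor\in\{1,2\}$ and $\theta\in[0,1)$ (with $\theta<1/2$ when $k=2$), expand $\partial_\xi^k(m\widehat f)$ by the ordinary Leibniz rule, and distribute the remaining fractional derivative $D_\xi^\theta$ using Theorem \ref{teo:leib}. Direct computation gives $m'(\xi)=-i(1+|\xi|)^{-2}$ and $m''(\xi)=2i\,\mathrm{sgn}(\xi)(1+|\xi|)^{-3}$, both in $L^\infty\cap L^2$. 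Each resulting term is then a product of a bounded/decaying symbol factor with some $\partial_\xi^{k-j}\widehat f$, which by Plancherel equals $c\||x|^{k-j}f\|_{L^2}$ and interpolates against $\|f\|_{L^2_{r_1,r_2}}$ and $\|f\|_{s_1,s_2}$.

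The main obstacle is the extreme term $\widehat f\cdot D_\xi^\theta\partial_\xi^2 m$ for $r_1$ close to $5/2$: because $\partial_\xi^2 m$ has a jump at $\xi=0$, its fractional derivative $D_\xi^\theta\partial_\xi^2 m$ lies in $L^\infty$ precisely when $\theta<1/2$, i.e.\ precisely when $r_1<5/2$. This is the point at which the regularity of $m$ at the origin becomes the bottleneck and explains why the hypothesis is sharp; it is exactly the same obstruction that later dictates the polynomial-decay threshold in the unique continuation argument.
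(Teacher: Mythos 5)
Your overall architecture matches the paper's: split the $\mathcal{F}_{r_1,r_2}^{s_1,s_2}$ norm, use that the symbol $m(\xi)=-i\xi/(1+|\xi|)$ is bounded and independent of $(y,\eta)$ to dispose of the $H^{s_1,s_2}$ and $|y|^{r_2}$ parts, and control $\||x|^{r_1}Bf\|_{L^2}$ on the Fourier side by peeling off $\lfloor r_1\rfloor$ ordinary derivatives of $m\widehat f$ and a residual fractional derivative. The paper does exactly this: it proves the case $r_1=2$ by Plancherel, interpolates to cover $0\le r_1\le 2$, and then treats $r_1=2+b$ with $0<b<1/2$ via the fractional Leibniz rule \eqref{eq:2.8}.

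However, your handling of the critical term does not close. You claim that $D_\xi^\theta\partial_\xi^2 m=D_\xi^\theta\bigl(2i\,\sgn(\xi)(1+|\xi|)^{-3}\bigr)$ lies in $L^\infty$ precisely when $\theta<1/2$. It does not: a jump discontinuity at $\xi=0$ forces $D_\xi^\theta$ of the function to behave like $|\xi|^{-\theta}$ near the origin for \emph{every} $\theta>0$ (this is exactly the content of Theorem \ref{lim}, which the paper later exploits for unique continuation). What is true is that this local singularity is square-integrable for $\theta<1/2$; but then the Leibniz term $\|\widehat f\cdot D_\xi^\theta\partial_\xi^2 m\|_{L^2}$ requires $\widehat f$ to be bounded near $\xi=0$, i.e.\ essentially $f\in L^1$, which is not available from $f\in\mathcal{F}_{r_1,r_2}^{s_1,s_2}$ alone. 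The paper sidesteps this entirely: it never differentiates $\sgn(\xi)$ fractionally, but instead recognizes multiplication by $\sgn(\xi)$ as (a constant times) the Hilbert transform on the physical side and invokes Theorem \ref{condicionAp} with the $A_2$ weight $|x|^{2b}$ of Example \ref{Ap2} --- valid exactly for $2b<1$, i.e.\ $b<1/2$, i.e.\ $r_1<5/2$ --- thereby reducing the estimate to $D_\xi^b$ applied to the smooth product $(1+|\xi|)^{-3}\widehat\varphi$, where \eqref{eq:2.8} applies cleanly. Ironically, this is precisely the $A_p$ tool you set aside for $r_1\ge 1/2$: it indeed fails for the full weight $|x|^{r_1}$, but it is indispensable for the residual weight $|x|^{b}$ once the two integer derivatives have been taken. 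Your diagnosis that the threshold $r_1<5/2$ comes from the $\sgn(\xi)$ singularity of $\partial_\xi^2 m$ is the right intuition, but the estimate you propose to realize it is false as stated and needs the Hilbert-transform/$A_2$ argument to be repaired.
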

\begin{proof} It is clear that 
\begin{align*}
  \|B(\varphi)\|_{{s_1,s_2}} &\le \|\varphi\|_{{s_1,s_2}} \intertext
  {and} \|(1+y^2)^{ \frac {r_2}2} B(\varphi)\|_{L^2} &=\|B((1+y^2)^{
    \frac {r_2}2} \varphi)\|_{L^2}\le \| (1+y^2)^{ \frac {r_2}2}\varphi\|_{L^{2}}.
\end{align*}
Let us examine what happens with $\||x|^{ r_2} B(\varphi)\|_{L^2} $. Since
$$\partial_\xi \left( \frac \xi{1+|\xi|} \right) =\frac {1}
{(1+ |\xi|)^2} \quad\text y \quad \partial_\xi^2 \left( \frac \xi{1+|\xi|} \right) =-\frac {2\sgn \xi}
{(1+ |\xi|)^3},$$ we have $$\|x^2 B\varphi\|_{L^2} =
\left\| \partial_\xi^2 \left( \frac \xi{1+|\xi|}  \widehat \varphi \right)
\right\|_{L^2} \le c\|(1+x^2) \varphi\|_{L^2}.$$
A simple interpolation argument shows that
\begin{equation}\label{eq:2.9}
\|(1+x^2)^{\frac {r_1}2} B\varphi\|_{L^2}\le C \|(1+x^2)^{\frac
  {r_1}2}  \varphi\|_{L^2},
\end{equation}
for $ r_1\le 2$.
Now suposse that $2< r_1<5/2$. In this case $r_1=2+b$, for
$0<b<1/2$. Therefore,
\begin{equation}\label{eq:2.10}
\begin{aligned}
\||x|^{ r_1}B\varphi \|=& \left \|D_\xi^b \partial_\xi^2 \left( \frac
    \xi{1+|\xi|}  \widehat \varphi \right) \right\|_{L^2} \\ \le &C \bigg(  \left \|
    D_\xi^b  \left( \frac \xi{1+|\xi|}  \partial_\xi^2 \widehat \varphi
    \right) \right\|_{L^2} + \left\| 
    D_\xi^b  \left( \frac 1{(1+|\xi|)^2}  \partial_\xi \widehat \varphi
    \right) \right\|_{L^2} \\ & + \left\|
    D_\xi^b  \left( \frac {-2\sgn \xi} {(1+|\xi|)^3}  \widehat \varphi
    \right) \right\|_{L^2}\bigg).
\end{aligned}
\end{equation}
Now, examining each of the terms on the right side of the above
inequality we have, first, as we have already proved that $B$ is a
bounded operator in $L^2((1+x^2)^b \, dxdy)$, for $0<b<1/2$, $$\left
  \| D_\xi^b \left( \frac \xi{1+|\xi|} \partial_\xi^2 \widehat \varphi
  \right) \right\|_{L^2} = \||x|^b B(x^2\phi) \|_{L^2} \le
C\|(1+x^2)^{\frac {r_1}2}\phi \|_{L^2}. $$ Second, from inequality
\eqref{eq:2.8} and since $\frac 1{(1+|\xi|)^2}\in H^1(\re)$,
\begin{align*} \left\| 
    D_\xi^b  \left( \frac 1{(1+|\xi|)^2}  \partial_\xi \widehat \varphi
    \right) \right\|_{L^2} \le & c\left( \left\| 
    D_\xi^b  \left( \frac 1{(1+|\xi|)^2}\right)\right\|_\infty
  \| \partial_\xi \widehat \varphi  \|_{L^2} + \|D_\xi^{b+1}
\widehat \varphi\|_{L^2} \right)\\ \le &c\|(1+x^2)^{\frac {r_1}2}\phi \|_{L^2},
\end{align*}
for $0<b<1/2$. Finally, from inequality \eqref{eq:2.8},
Theorem \ref{condicionAp} and since $\frac 1{(1+|\xi|)^3}\in
H^1(\re)$, we have \begin{align*}\left\| D_\xi^b \left( \frac {-2\sgn \xi}
      {(1+|\xi|)^3} \widehat \varphi \right) \right\|_{L^2} =&2\left
    \||x|^b \mathscr H \left( \frac {1} {(1+|\xi|)^3} \widehat \varphi
    \right)^\vee \right\|_{L^2}\\ \le & \left\||x|^b \left( \frac {1}
      {(1+|\xi|)^3} \widehat \varphi \right)^\vee \right\|_{L^2} =
  \left\| D_\xi^b \left( \frac {1} {(1+|\xi|)^3} \widehat \varphi
    \right) \right\|_{L^2}\\ \le &c\left( \left\| D_\xi^b
      \left( \frac {1} {(1+|\xi|)^3} \right) \right\|_\infty \| \widehat
    \varphi \|_{L^2} +\left\| \frac {1} {(1+|\xi|)^3}  D_\xi^b
      \widehat \varphi \right\|_{L^2}\right)\\ \le &c\|(1+x^2)^{\frac
    {r_1}2}\varphi \|_{L^2},
\end{align*}
for $0<b<1/2$. Then, from these last  estimates and
\eqref{eq:2.10} it follows \eqref{eq:2.9}, for $2< r_1<5/2$. It proves 
the corollary. 
\end{proof}
We recall that Stein derivative is defined by: for $b \in
(0,1)$ and complex-valued measurable function $f$ on $\re^n$, 
\begin{equation}
\mathcal{D}^{b}f(x)= \left(\int_{\re^{n}}
  \frac{|f(x)-f(y)|^2}{|x-y|^{n+2b}} \, dy \right)^{\frac12}. 
\end{equation}
\begin{defi}
For $s\in \re$ we denote by $L_s^p(\re^n)$ the space of all 
functions $f$ in $L^p(\re^n)$ such that $(1-\Delta)^sf\in L^p(\re^n)$.
The norm in this space is defined by $$\|f\|_{s,p}=\|
(1-\Delta)^{s/2}f\|_p,$$ for all $f$ in $L^p(\re^n)$. 
\end{defi}
In the foollowing theorem it is given a characterization of the spaces
$L_s^p(\re^n)$ in Stein derivative terms.
\begin{teo}
Suposse $b \in (0,1)$ and ${2n}/ {n+2b} \leq p < \infty$. Then, $f \in L_b^{p}(\re^n)$ if, and only if,
\begin{enumerate}[(a)]
\item $f \in L^{p}(\re^n)$
\item $\displaystyle{\mathcal{D}^{b}f(x) \in L^{p}(\re^{n})}$. 
\end{enumerate}
Furthermore,
\begin{equation*}
\|f\|_{b,p}= \|(1- \Delta)^{b/2}f\|_p \cong \|f\|_p+\|D^b f\|_p \cong \|f\|_p+\|\mathcal{D}^b f\|_p
\end{equation*}
\end{teo}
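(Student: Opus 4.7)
The plan is to establish Stein's classical chain of equivalences
\begin{equation*}
\|(1-\Delta)^{b/2} f\|_p \cong \|f\|_p + \|D^b f\|_p \cong \|f\|_p + \|\mathcal{D}^b f\|_p
\end{equation*}
in two successive stages: first a Fourier-multiplier reduction that compares the Bessel potential to the pure Riesz-type operator $D^b$, and then a vector-valued singular-integral estimate that relates $D^b$ to the square function $\mathcal{D}^b$. Once both stages are in hand, the if-and-only-if of the theorem follows by reading the chain in either direction.

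For the first equivalence, I would introduce the symbols
\begin{equation*}
m_1(\xi) = \frac{(1+|\xi|^2)^{b/2}}{1+|\xi|^b}, \qquad m_2(\xi) = \frac{1+|\xi|^b}{(1+|\xi|^2)^{b/2}},
\end{equation*}
check that both are bounded on $\re^n$ and satisfy the Mikhlin-H\"ormander derivative bound $|\partial^\alpha m_j(\xi)| \le C|\xi|^{-|\alpha|}$ for $|\alpha| \le [n/2]+1$, and conclude that both define bounded operators on $L^p$ for $1 < p < \infty$. This reduces the first comparison to an elementary symbol calculation.

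The heart of the argument is the second equivalence $\|D^b f\|_p \cong \|\mathcal{D}^b f\|_p$. At $p=2$, Plancherel combined with Fubini and the elementary identity $\int_{\re^n} |1-e^{i\langle h,\xi\rangle}|^2 |h|^{-n-2b}\, dh = c_{n,b}|\xi|^{2b}$ yields the equivalence directly. For general $p$ in the stated range I would reinterpret $\mathcal{D}^b f(x)$ as the $L^2(|h|^{-n}\,dh)$-norm of the vector-valued operator $Tf(x,h) = (f(x+h)-f(x))/|h|^b$ and invoke the vector-valued Calder\'on-Zygmund theory, together with a Littlewood-Paley decomposition, to transfer the $L^2$ bound to the desired $L^p$ range.

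The main obstacle will be the reverse inequality $\|D^b f\|_p \le c(\|f\|_p + \|\mathcal{D}^b f\|_p)$ in the subcritical regime $p<2$, where one cannot simply invert the forward estimate. The cleanest route I see is to bring in Stein's $g$-function
\begin{equation*}
g_b f(x) = \Bigl(\int_0^\infty t^{1-2b}|\partial_t P_t f(x)|^2\, dt\Bigr)^{1/2},
\end{equation*}
with $P_t$ the Poisson semigroup, to prove $\|g_b f\|_p \cong \|D^b f\|_p$ on the full allowed range, and then to dominate $g_b f$ pointwise by a suitable maximal-type average of $\mathcal{D}^b f$. The threshold $p \ge 2n/(n+2b)$ is sharp and reflects precisely the exponent at which the vector-valued estimate interpolates between the $L^2$ identity and a Hardy-space endpoint; failing to secure this pointwise control is where the proof would break down outside the stated range.
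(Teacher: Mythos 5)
The paper does not actually prove this theorem: its ``proof'' is the citation to Stein's 1961 announcement and his 1970 book, so there is no internal argument to compare yours against. Judged on its own merits, your first stage is sound: the symbols $m_1$ and $m_2$ are bounded and satisfy the Mikhlin--H\"ormander condition (the derivatives of $1+|\xi|^b$ blow up like $|\xi|^{b-1}$ at the origin, which is still dominated by $|\xi|^{-1}$ since $b>0$), so the comparison $\|(1-\Delta)^{b/2}f\|_p \cong \|f\|_p+\|D^bf\|_p$ for $1<p<\infty$ is a routine multiplier argument. The forward inequality $\|\mathcal{D}^bf\|_p\lesssim \|f\|_p+\|D^bf\|_p$ via the vector-valued Calder\'on--Zygmund interpretation of $h\mapsto (f(x+h)-f(x))/|h|^b$ is also the standard and correct route.

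The gap is in the step you yourself flag as the main obstacle, and the fix you propose does not work as stated. A pointwise domination of $g_bf$ by $\mathcal{D}^bf$ is false: writing $\partial_tP_tf(x)=\int \partial_tP_t(h)\,(f(x+h)-f(x))\,dh$ and applying Cauchy--Schwarz against the weight $|h|^{n+2b}$ gives $|\partial_tP_tf(x)|^2\le C\,t^{2b-2}\,(\mathcal{D}^bf(x))^2$, so the integrand of $g_bf(x)^2$ is only bounded by $C\,t^{-1}(\mathcal{D}^bf(x))^2$ and the $t$-integral diverges logarithmically. Stein's actual argument replaces $g_b$ by the nontangential square function $g_\lambda^{*}$ with $\lambda=(n+2b)/n$; the spatial averaging built into $g_\lambda^{*}$ is what absorbs the divergence, and its $L^p$-boundedness range $p>2/\lambda=2n/(n+2b)$ is the true source of the threshold in the statement --- not an interpolation between $L^2$ and a Hardy-space endpoint, as you suggest. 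Without identifying $g_\lambda^{*}$ (or an equivalent device) and proving the correct averaged pointwise bound, the reverse inequality for $p<2$ remains unproved, which is precisely the content of the theorem beyond the easy $p\ge 2$ case.
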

\begin{proof}
See \cite{stein1961} or \cite{stein1970}.
\end{proof}
The next theorem is analogous to Theorem \ref{teo:leib} for the Stein
derivative case.
\begin{teo}
  Let $b \in (0,1)$ and $1 \leq p < \infty$. If $f, g: \re^n \to
  \mathbb{C}$ are measurable functions, then
\begin{align}
\text{ a)}\quad&\mathcal{D}^b(fg)(x) \leq \|f\|_{\infty} \mathcal{D}^bg(x)+ |g(x)|\mathcal{D}^bf(x)\\
\text{ b)}\quad&\|\mathcal{D}^b(fg)\|_p \leq \|f\|_{\infty} \|\mathcal{D}^bg\|_p + \|g\mathcal{D}^bf\|_p\\
\text{ c)}\quad&\|\mathcal{D}^b(fg)\|_2 \leq \|f\mathcal{D}^bg\|_2 +
\|g\mathcal{D}^bf\|_2 \label{equ1}
\end{align}
\begin{proof}
See \cite{napo}
\end{proof}
\end{teo}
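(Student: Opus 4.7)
All three inequalities rest on a common algebraic decomposition of $(fg)(x)-(fg)(y)$ followed by Minkowski's (triangle) inequality in an appropriate $L^2$ space; what distinguishes them is how one handles the nonsymmetric value $f(y)$ (or $g(y)$) that appears after the decomposition.

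For (a), I start from
\[
(fg)(x)-(fg)(y)=g(x)\bigl(f(x)-f(y)\bigr)+f(y)\bigl(g(x)-g(y)\bigr),
\]
take absolute values, bound $|f(y)|\le\|f\|_{\infty}$, and apply the triangle inequality in the Hilbert space $L^2\bigl(dy/|x-y|^{n+2b}\bigr)$. This directly yields $\mathcal{D}^b(fg)(x)\le |g(x)|\mathcal{D}^bf(x)+\|f\|_{\infty}\mathcal{D}^bg(x)$. Inequality (b) then follows by taking $L^p(\re^n)$ norms of (a) and applying the triangle inequality once more.

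For (c), one cannot afford to pull out an $L^\infty$ norm. I would instead use the pointwise bound $|(fg)(x)-(fg)(y)|\le |f(x)||g(x)-g(y)|+|g(y)||f(x)-f(y)|$, and apply Minkowski in $L^2\bigl(dy/|x-y|^{n+2b}\bigr)$ to obtain
\[
\mathcal{D}^b(fg)(x)\le |f(x)|\mathcal{D}^bg(x)+I(x),\qquad I(x)^2=\int_{\re^n}\frac{|g(y)|^2|f(x)-f(y)|^2}{|x-y|^{n+2b}}\,dy.
\]
Taking the $L^2(\re^n,dx)$ norm and using the triangle inequality gives $\|\mathcal{D}^b(fg)\|_2\le \|f\mathcal{D}^bg\|_2+\|I\|_2$. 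The crucial computation is that of $\|I\|_2^2$: by Fubini (swapping the roles of $x$ and $y$),
\[
\|I\|_2^2=\int_{\re^n}\!\!\int_{\re^n}\frac{|g(y)|^2|f(x)-f(y)|^2}{|x-y|^{n+2b}}\,dx\,dy=\int_{\re^n}|g(y)|^2\,\mathcal{D}^bf(y)^2\,dy=\|g\mathcal{D}^bf\|_2^2.
\]
Taking square roots yields (c).

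The only genuine subtlety I expect lies in (c): one must pick the decomposition that leaves $|f(x)|$ (and not $|f(y)|$) multiplying $\mathcal{D}^bg$, and one must resist the temptation to square too early. Applying the triangle inequality at the level of $L^2$ norms rather than squared norms is what produces the sharp constant $1$; a crude $(a+b)^2\le 2a^2+2b^2$ would only give $\sqrt{2}$ on the right-hand side. The Fubini step is the one delicate point where the symmetric kernel $|x-y|^{-(n+2b)}$ is essential.
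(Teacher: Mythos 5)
Your argument is correct and complete. Note that the paper itself offers no proof of this theorem---it simply refers the reader to \cite{napo}---so there is no internal argument to compare yours against; what you have written is essentially the standard proof one finds in that reference. Parts (a) and (b) follow exactly as you say from the decomposition $(fg)(x)-(fg)(y)=g(x)(f(x)-f(y))+f(y)(g(x)-g(y))$, the bound $|f(y)|\le\|f\|_\infty$, and Minkowski's inequality in $L^2\bigl(dy/|x-y|^{n+2b}\bigr)$. For (c) your two key observations are exactly the right ones: the decomposition must be chosen so that the factor evaluated at $x$ (here $|f(x)|$) multiplies the difference quotient of $g$, and the leftover term $I(x)$, which is \emph{not} $|g(x)|\mathcal{D}^bf(x)$ pointwise, nevertheless has the same $L^2(\re^n,dx)$ norm as $g\,\mathcal{D}^bf$ by Tonelli and the symmetry of the kernel $|x-y|^{-(n+2b)}$ (Tonelli rather than Fubini is the cleaner citation, since the integrand is nonnegative and no integrability hypothesis is needed). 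Your remark about applying the triangle inequality at the level of norms rather than squares, so as to get constant $1$ rather than $\sqrt2$, is also on point. The only cosmetic caveat is that the inequalities should be read in $[0,\infty]$ unless one assumes a priori that the right-hand sides are finite, but this costs nothing.
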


\begin{lema}\label{interpolacion}
Let $a$ and $b$ positive real numbers. If $J_u =
(1-\partial_u^2)^{1/2}$ and $\langle v\rangle=(1+v^2)^{1/2}$, then,
for any $\theta \in (0,1)$,

\[\text{ a)}\quad \|J_u^{\theta a}( \langle v\rangle^{(1-\theta)b}f)\| \leq c \|\langle v \rangle ^b f\|^{1- \theta} \|J_u^af\|^{\theta} \]
\[\text{ b)} \quad \|\langle v\rangle^{(1-\theta)b}J_u^{\theta a}f)\| \leq c \|\langle v \rangle ^b f\|^{1- \theta} \|J_u^af\|^{\theta}, \]
for $f\in L^2(\re^2)$, where $u$ and $v$ can be taken as $x$ and $y$
interchangeably.
\end{lema}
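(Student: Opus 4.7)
The plan is to prove both inequalities by Hadamard's three lines theorem applied to an analytic family of operators, in the spirit of Stein's complex interpolation. For (a), fix $f\in\SSh(\re^2)$ and for $z$ in the closed strip $\overline\Omega=\{z\in\co : 0\le\mathrm{Re}(z)\le 1\}$ define
\[
T_z f := J_u^{za}\bigl(\langle v\rangle^{(1-z)b}f\bigr).
\]
The structural fact that drives the argument is that $J_u$, being a Fourier multiplier acting only in the variable $u$, commutes with multiplication by any complex power of $\langle v\rangle$, since $v$ is an independent variable.

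The first step is to bound $T_z$ on the two boundary lines. For $z=it$, write $\langle v\rangle^{(1-it)b}=\langle v\rangle^b\langle v\rangle^{-itb}$ and commute to obtain $T_{it}f=J_u^{iat}\langle v\rangle^{-itb}(\langle v\rangle^b f)$; since $J_u^{iat}$ (Fourier multiplier with unimodular symbol) and $\langle v\rangle^{-itb}$ (unimodular pointwise multiplier) are both $L^2$-isometries,
\[
\|T_{it}f\|_{L^2}\;\le\;\|\langle v\rangle^b f\|_{L^2},\qquad t\in\re.
\]
For $z=1+it$, using $J_u^{(1+it)a}=J_u^aJ_u^{iat}$ and commuting gives $T_{1+it}f=\langle v\rangle^{-itb}J_u^{iat}(J_u^a f)$, hence $\|T_{1+it}f\|_{L^2}\le\|J_u^a f\|_{L^2}$.

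The second step is to fix $g\in\SSh(\re^2)$ with $\|g\|_{L^2}=1$ and set $F(z):=\langle T_z f,g\rangle$. Since $f$ is Schwartz, $\langle v\rangle^{(1-z)b}f$ is a Schwartz function whose $L^2$ norm is dominated by $\|\langle v\rangle^b f\|_{L^2}$ uniformly on $\overline\Omega$, and $J_u^{za}$ applied to a Schwartz function remains Schwartz; so $F$ is bounded on $\overline\Omega$ and holomorphic on $\Omega$ (by dominated convergence in the Fourier representation of $J_u^{za}$). Hadamard's three lines theorem then yields
\[
|F(\theta)|\;\le\;\|\langle v\rangle^b f\|_{L^2}^{\,1-\theta}\,\|J_u^a f\|_{L^2}^{\,\theta},
\]
and taking the supremum over admissible $g$ proves (a) for Schwartz $f$. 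A standard density argument extends the inequality to every $f$ making the right-hand side finite. Part (b) follows by the same scheme applied to $T_z f:=\langle v\rangle^{(1-z)b}J_u^{za}f$; the commutation of $J_u$ with $\langle v\rangle^{(1-z)b}$ makes the boundary estimates verbatim identical.

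The main obstacle, and the only non-trivial technical point, is verifying the analyticity and the uniform boundedness of $F(z)$ on $\overline\Omega$ needed to invoke three lines without the Phragmén--Lindelöf refinement. Working with Schwartz $f$ disposes of this cleanly, because $|\langle v\rangle^{(1-z)b}|=\langle v\rangle^{(1-\mathrm{Re}(z))b}\le\langle v\rangle^b$ throughout $\overline\Omega$, so no growth in $\mathrm{Im}(z)$ occurs and the bound on $\|T_z f\|_{L^2}$ is controlled by $\|\langle v\rangle^b f\|_{L^2}+\|J_u^a f\|_{L^2}$. With this point in hand, the rest of the proof reduces entirely to the algebraic commutativity of $J_u$ and multiplication by functions of $v$.
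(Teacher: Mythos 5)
Your proof is correct and follows essentially the same route as the paper, which for this lemma simply cites the proof of Lemma 1 in \cite{Fonseca1} --- an analytic-family/three-lines (Stein interpolation) argument of exactly the kind you give, with the boundary estimates $\|T_{it}f\|\le\|\langle v\rangle^b f\|$ and $\|T_{1+it}f\|\le\|J_u^a f\|$ obtained from the unimodularity of the imaginary powers. The only remark worth adding is that the commutativity you isolate (valid because $J_u$ is a Fourier multiplier in $u$ while $\langle v\rangle$ multiplies in the independent variable $v$) in fact makes a) and b) the same statement and allows one to bypass interpolation altogether: writing the square of the left-hand side in the partial-Fourier variables as $\int (1+\xi^2)^{\theta a}(1+v^2)^{(1-\theta)b}|\tilde f|^2$ and applying H\"older with exponents $1/\theta$ and $1/(1-\theta)$ gives the inequality with $c=1$, which is presumably what the authors mean by the proof being ``very simpler when $u$ and $v$ are different.''
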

\begin{proof}
  The proof is exactly the same of the Lemma 1 in
  \cite{Fonseca1} (See also \cite{napo}, Lemma 4); indeed, it is very
 simpler when $u$ and $v$ are different.
\end{proof}

The next theorem is used in the proof of the unique continuation of
solutions of the equation \eqref{eq:rBOZK}.

\begin{teo}\label{lim}
  Let $p \in (1, \infty)$ and $f \in L^p(\re)$. If for some $x_0 \in
  \re$ such that $f(x_0+)$ and $f(x_0-)$ there exist and are different,
  then, for any $\delta >0$, $D^{1/p}f \notin
  L_{loc}^p(B(x_0,\delta))$. In particular, $f \notin
  L_{\frac{1}{p}}^p(\re)$
\end{teo}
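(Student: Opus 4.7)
The proof plan is by contradiction: assume $D^{1/p}f\in L^p_{loc}(B(x_0,\delta))$ and show that the jump of $f$ at $x_0$ forces the Stein derivative of a localized copy of $f$ to have infinite $L^p$ norm, contradicting the Stein-derivative characterization of $L^p_{1/p}(\re)$ recalled earlier.

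First I would localize. Set $J:=|f(x_0+)-f(x_0-)|>0$ and fix $\eta\in(0,\delta/2)$ so small that $|f(z)-f(x_0+)|<J/4$ for every $z\in(x_0,x_0+\eta)$ and $|f(z)-f(x_0-)|<J/4$ for every $z\in(x_0-\eta,x_0)$. Pick $\phi\in C_c^\infty(\re)$ with $\phi\equiv 1$ on $[x_0-\eta,x_0+\eta]$ and $\mathrm{supp}(\phi)\subset B(x_0,\delta)$. From the decomposition
$$D^{1/p}(\phi f)=\phi\, D^{1/p}f+[D^{1/p},\phi]f,$$
the first summand lies in $L^p(\re)$ because $\mathrm{supp}(\phi)$ is a compact subset of $B(x_0,\delta)$ on which $D^{1/p}f$ is, by hypothesis, $L^p$, while the commutator $[D^{1/p},\phi]f$ belongs to $L^p(\re)$ by a Kato--Ponce/Calder\'on-type bound in the spirit of Proposition \ref{conmutador}. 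Thus $\phi f\in L^p_{1/p}(\re)$, so by the Stein-derivative characterization, $\mathcal D^{1/p}(\phi f)\in L^p(\re)$.

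Next I would obtain a pointwise lower bound that contradicts the previous step. Since $\phi\equiv 1$ on $[x_0-\eta,x_0+\eta]$, for $x\in(x_0,x_0+\eta)$ and $y\in(x_0-\eta,x_0)$ we have $(\phi f)(x)-(\phi f)(y)=f(x)-f(y)$, and the triangle inequality together with the choice of $\eta$ yields $|f(x)-f(y)|\ge J/2$. Hence
$$\mathcal D^{1/p}(\phi f)(x)^2\ge \frac{J^2}{4}\int_{x_0-\eta}^{x_0}\frac{dy}{(x-y)^{1+2/p}}=\frac{pJ^2}{8}\Bigl[(x-x_0)^{-2/p}-(x-x_0+\eta)^{-2/p}\Bigr].$$
For $x$ close enough to $x_0$ from the right, the first bracketed term dominates, producing $\mathcal D^{1/p}(\phi f)(x)\ge C(x-x_0)^{-1/p}$ on some interval $(x_0,x_0+\eta')$; raising to the $p$-th power and integrating yields $\|\mathcal D^{1/p}(\phi f)\|_{L^p(\re)}^p\ge C^p\int_{x_0}^{x_0+\eta'}(x-x_0)^{-1}\,dx=+\infty$, the desired contradiction. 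The ``in particular'' claim is immediate, since $f\in L^p_{1/p}(\re)$ would give $D^{1/p}f\in L^p(\re)\subset L^p_{loc}(B(x_0,\delta))$, already excluded.

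The step I expect to be the main obstacle is the commutator bound $\|[D^{1/p},\phi]f\|_{L^p(\re)}\le C\|f\|_{L^p(\re)}$, since Proposition \ref{conmutador} is stated only for $b=1/2$ in $L^2$. For $b=1/p$ in $L^p$ I would adapt its Fourier-side argument using the elementary inequality $\bigl||\xi|^{1/p}-|\eta|^{1/p}\bigr|\le|\xi-\eta|^{1/p}$ (valid because $1/p\le 1$), which reduces the commutator to convolution of $|\widehat f|$ against the $L^1$ function $|\xi|^{1/p}|\widehat\phi(\xi)|$, and then upgrade from $L^2$ to $L^p$ either via a Calder\'on--Zygmund analysis of the resulting kernel or by invoking Theorem \ref{teo:leib}(ii) with an appropriate splitting of the order $1/p$.
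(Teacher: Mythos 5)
Your argument is correct in substance, and it is a genuinely fuller route than what the paper records: the paper's entire proof of Theorem \ref{lim} is the one-line assertion that $D^{1/p}f(x)\sim |x-x_0|^{-1/p}$ as $x\to x_0$, leaving the reader to justify both the localization (needed because $D^{1/p}$ is nonlocal and the hypothesis is only local) and the asymptotics. You supply exactly the two missing ingredients: the cutoff-plus-commutator step that upgrades the local hypothesis to $\phi f\in L^p_{1/p}(\re)$, and the passage to the Stein derivative $\mathcal D^{1/p}$, whose positive integral kernel converts the jump $J$ into the clean pointwise lower bound $\mathcal D^{1/p}(\phi f)(x)\gtrsim J\,(x-x_0)^{-1/p}$ near $x_0^+$, which fails to be in $L^p$; the computation of $\int_{x_0-\eta}^{x_0}(x-y)^{-1-2/p}\,dy$ and the triangle-inequality bound $|f(x)-f(y)|\ge J/2$ are both right. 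What the paper's heuristic buys is brevity; what yours buys is an actual proof, and it is the natural one given that the paper has already imported the Stein characterization $\|f\|_{b,p}\cong\|f\|_p+\|\mathcal D^bf\|_p$.

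The one point you correctly flag as the obstacle is the only real caveat: the commutator bound $\|[D^{1/p},\phi]f\|_{L^p}\le C\|f\|_{L^p}$ for $p\ne 2$. Your Fourier-side reduction via $\bigl||\xi|^{1/p}-|\eta|^{1/p}\bigr|\le|\xi-\eta|^{1/p}$ closes the case $p=2$ (it is exactly Proposition \ref{conmutador} with $1/2$ replaced by $1/p$, and Plancherel plus Young finishes), but it does not transfer to $L^p$ by Hausdorff--Young alone, and Theorem \ref{teo:leib} does not apply directly either, since both versions of the Leibniz rule put a norm of $D^{b_2}f$ or $\|f\|_\infty$ on the right, neither of which you control. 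The statement is nonetheless true ($[D^{b},M_\phi]$ is, modulo a Mikhlin multiplier, a pseudodifferential operator of order $b-1\le 0$), so this is a citable classical fact rather than a gap in the strategy; and for the only use the paper makes of Theorem \ref{lim}, namely the $p=2$ case inside Theorem \ref{continuacionunica}, Proposition \ref{conmutador} already suffices and your proof is complete as written.
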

\begin{proof}
  Observe that $D^{1/p}f(x)\sim \dfrac1{|x-x_0|^{1/p}}$, when $x\to x_0$. 
\end{proof}
\section{Well-posedness}
\subsection{The linear problem in weighted spaces}
In the next two lemmas, it will be given explicitly the formulae for
derivatives of the symbol (via Fourier transform) of the unitary group
that generates the solutions to the linear equation associated to
\eqref{eq:rBOZK}. 
 \begin{lema}\label{derivadas1}
 Let  $F(\xi,\eta,t)=e^{b(\xi, \eta)t}$, where
 $b(\xi,\eta)=\frac{i\xi\eta^2}{1+|\xi|} $. Then,
 \begin{align*}\partial _{\xi} F(t,\xi,\eta)= (it)(1+|\xi|)^{-2}\eta^2
    F(t,\xi, \eta)\end{align*}
\begin{align*} 
\partial^2 _{\xi} F(t,\xi,\eta) =& -2it(1+|\xi|)^{-3} \sgn(\xi)\eta^2
F(t, \xi,\eta) + (it)^2(1+|\xi|)^{-4} \eta^4 F(t,\xi,\eta)\\ 
\partial^3 _{\xi} F(t,\xi,\eta) =& 4(it)\delta \eta^2+
6it(1+|\xi|)^{-4}\eta^2 F(t, \xi,\eta)- \\&-6(it)^2(1+|\xi|)^{-5}
\sgn(\xi)\eta^4 F(t,\xi,\eta)+ (it)^3(1+|\xi|)^{-6} \eta^6 F(t,
\xi,\eta) \\
\partial^4 _{\xi} F(t,\xi,\eta) =& 4(it)\delta' \eta^2 -6(it)^2\delta
\eta^4-24(it)(1+|\xi|)^{-5}\sgn(\xi)\eta^2 F(t, \xi,\eta)+ \\
&+36(it)^2(1+|\xi|)^{-6} \eta^4F(t,\xi,\eta)-\\ &-12(it)^3 (1+|\xi|)^{-7}
\sgn(\xi) \eta^6F(t, \xi,\eta) + (it)^4 (1+|\xi|)^{-8}\eta^8 F(t,\xi, \eta)
\intertext{In general,  for $j \geq 5$}
\partial_{\xi}^{j}F(t,\xi,\eta) =& 4(it)\delta^{(j-3)}\eta^2 -6(it)^2
\delta^{(j-4)}\eta^4 -\\
&- \sum \limits_{k=1}^{j-4} (a_k (it)^{k} \eta^{2k}+b_k
(it)^{k+2}\eta^{2(k+2)}) \delta^{(k-1)} +\\
&+\sum \limits_{k=1}^{j} a_k (it)^{j}(1+|\xi|)^{-(j+k)}
(\sgn(\xi))^{j-k} \eta^{2k}F(t, \xi, \eta)
\end{align*}
where $\delta=\delta_{\xi=0}$ is  the Dirac's delta measure concentred
in the straight line $\xi=0$, and $a_k$ and $b_k$ are constants
dependending only on $k$.
\end{lema}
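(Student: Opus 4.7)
The plan is to prove all formulas by iterated application of the chain and Leibniz rules, treating smooth (pointwise) contributions and distributional contributions supported at $\xi=0$ separately. Writing $b(\xi,\eta) = i\eta^2 g(\xi)$ with $g(\xi) = \xi(1+|\xi|)^{-1}$, a direct computation splitting into the cases $\xi > 0$ and $\xi < 0$ gives $g'(\xi) = (1+|\xi|)^{-2}$; the chain rule then yields $\partial_\xi F(t,\xi,\eta) = it\eta^2(1+|\xi|)^{-2} F(t,\xi,\eta)$, which is the stated formula for $j=1$.

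For the higher derivatives I would apply Leibniz's rule repeatedly, using the two identities
\[
\partial_\xi (1+|\xi|)^{-k} = -k\,\sgn(\xi)\,(1+|\xi|)^{-(k+1)}, \qquad \partial_\xi \sgn(\xi) = 2\,\delta_{\xi=0},
\]
the first valid pointwise away from $\xi=0$ (and distributionally across it, since $(1+|\xi|)^{-k}$ is continuous at $0$), the second in the distributional sense. Each further differentiation acts in exactly three possible ways on a summand: it can (i) differentiate a factor of $F$, contributing a multiplicative $it\eta^2(1+|\xi|)^{-2}$; (ii) differentiate a smooth factor of the form $\sgn(\xi)^m(1+|\xi|)^{-k}$, producing (using $(\sgn\xi)^2=1$) a new smooth term with the power of $(1+|\xi|)$ lowered by one and the exponent of $\sgn$ flipped; or (iii) produce a delta contribution from $\partial_\xi\sgn(\xi) = 2\delta_{\xi=0}$. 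The first delta contribution appears at $j=3$, and the cases $j=2,3,4$ are then routine (if tedious) direct verifications. Whenever a delta or one of its derivatives multiplies a smooth function, one simplifies using $F(t,0,\eta)=1$ together with the standard identity $\varphi\,\delta_{\xi=0}^{(k)} = \sum_{\ell=0}^k (-1)^\ell \binom{k}{\ell}\varphi^{(\ell)}(0)\,\delta_{\xi=0}^{(k-\ell)}$.

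For $j \ge 5$ I would proceed by induction. Assuming the formula for $j-1$, differentiating once more produces three families of terms: from the $F$-weighted smooth summands, a new family fitting into $\sum_{k=1}^j a_k(it)^j(1+|\xi|)^{-(j+k)}(\sgn\xi)^{j-k}\eta^{2k}F$; from the $\sgn\xi$ factors in those summands, new distributional contributions whose coefficients, after being reduced via the product formula above, collapse into the claimed form with constants depending only on $k$; and from the preexisting $\delta^{(k-1)}$ terms, a shift to $\delta^{(k)}$. The two isolated summands $4(it)\delta^{(j-3)}\eta^2$ and $-6(it)^2\delta^{(j-4)}\eta^4$ in the statement correspond to the top-order singular contributions newly generated at step $j$.

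The main obstacle is not any analytic subtlety but the combinatorial bookkeeping: one must keep separate ledgers for the absolutely continuous part and the singular part supported at $\xi=0$, verify at each induction step that the coefficients depend only on $k$ (and not on $j$) in the asserted way, and ensure that cross-products of $\sgn\xi$ with itself are correctly reduced using $(\sgn\xi)^2=1$. Writing out the $j=5$ case explicitly should exhibit the pattern that powers the induction and confirm that no new types of terms can arise.
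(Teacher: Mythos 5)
The paper states this lemma without any proof, treating it as a direct computation, and your proposal --- the chain/Leibniz-rule computation with $\partial_\xi \sgn(\xi)=2\delta_{\xi=0}$ handled distributionally, the reduction $\varphi\,\delta^{(k)}=\sum_\ell(-1)^\ell\binom{k}{\ell}\varphi^{(\ell)}(0)\delta^{(k-\ell)}$, and induction for $j\ge 5$ --- is exactly the argument the paper implicitly relies on, so your approach is correct and essentially the same. One remark worth recording: carrying the computation out carefully yields the singular term at $j=3$ as $-4it\eta^2\delta$ rather than $+4(it)\delta\eta^2$, and at $j=4$ as $-4it\eta^2\delta'-12(it)^2\eta^4\delta$, so a few of the displayed constants and signs in the statement are off; this is immaterial for the later use of the lemma (Corollary \ref{pesoenx}), where only the presence, order, and nonvanishing of the delta contributions matter.
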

\begin{corol}\label{pesoenx}
  Let $A=\partial_x(1+\mathscr{H}\partial_x)^{-1} \partial_{yy}$ and
  $E(t)=e^{At}$.  For $s_1$, $s_2 \in \re$ and $r \in \mathbb{N}$ such
  that $s_2 \geq 2r$, we have:
\begin{enumerate}
\item If $r=1$ or $2$, 
  \[\|E(t)\varphi\|_{\mathcal{F}_{r,0}^{s_1,s_2}} \leq
  P_r(t)\|\varphi\|_{\mathcal{F}_{r,0}^{s_1,s_2}},\] where $P_r(t)$ is
  a polynomial of degree $r$ in $t$ with  positive coefficients.
\item If $r\geq 3$ and $\varphi \in \mathcal{F}_{r,0}^{s_1,s_2}$,
  $E(t)\varphi \in C([0,\infty) ; \mathcal{F}_{r,0}^{s_1,s_2})$ if, and only
  if,
\[\partial_{\xi}^j \widehat{\varphi}(0,\eta)=0,\text{ for all $\eta$
y } j=0,1,2 \dots r-3\]
\end{enumerate}
\end{corol}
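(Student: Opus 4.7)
The plan is to work on the Fourier side: since $E(t)$ is a Fourier multiplier by $F(t,\xi,\eta)=e^{b(\xi,\eta)t}$ with $b$ purely imaginary, one has $|F|\equiv 1$, hence $\|E(t)\varphi\|_{s_1,s_2}=\|\varphi\|_{s_1,s_2}$. The whole content of the corollary is therefore the control of the weight in $x$, i.e.\ of
\[\||x|^{r}E(t)\varphi\|_{L^2(\re^2)}=\|\partial_\xi^{r}(F\widehat\varphi)\|_{L^2(\re^2)}\]
(by Plancherel and $|x|^{r}f\leftrightarrow (\pm i)^{r}\partial_\xi^{r}\widehat f$).

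First I would apply the Leibniz rule
\[\partial_\xi^{r}(F\widehat\varphi)=\sum_{k=0}^{r}\binom{r}{k}(\partial_\xi^{k}F)(\partial_\xi^{r-k}\widehat\varphi)\]
and insert the explicit formulae from Lemma \ref{derivadas1}. Every smooth summand inside $\partial_\xi^{k}F$ has the form $t^{m}(1+|\xi|)^{-\alpha}(\sgn\xi)^{\beta}\eta^{2m}F$ with $m\leq k$, and is therefore bounded pointwise by $t^{m}|\eta|^{2m}$; the resulting $L^{2}$ contribution is controlled by
\[t^{m}\|\eta^{2m}\partial_\xi^{r-k}\widehat\varphi\|_{L^2}=t^{m}\|x^{r-k}\partial_y^{2m}\varphi\|_{L^2}.\]
Using Lemma \ref{interpolacion}(b) with $u=y$, $v=x$, $a=s_2$, $b=r$ and $\theta=k/r$, together with the monotonicity $\|J_y^{\beta}g\|\leq\|J_y^{\beta_0}g\|$ for $\beta\leq\beta_0$ (which is in force since $(k/r)s_2\geq 2k\geq 2m$ by $s_2\geq 2r$), each such term is bounded by $C\,t^{m}\|\varphi\|_{\mathcal{F}_{r,0}^{s_1,s_2}}$. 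Summing in $k$ and $m$ produces a polynomial $P_r(t)$ of degree $r$ with positive coefficients. Since for $r\leq 2$ the derivatives $\partial_\xi^{k}F$ with $k\leq r$ are entirely smooth, this proves part (1).

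For part (2), with $r\geq 3$, the same expansion applies but $\partial_\xi^{k}F$ with $k\geq 3$ now contains singular distributional summands $\delta^{(k-3)}_{\xi=0}\eta^{2}$ and, for $k\geq 4$, $\delta^{(k-4)}_{\xi=0}\eta^{4}$, together with the lower-order $\delta^{(l-1)}$ terms listed in the general formula of Lemma \ref{derivadas1}. Using
\[\delta^{(m)}_{\xi=0}\cdot g(\xi,\eta)=\sum_{l=0}^{m}\binom{m}{l}(-1)^{l}(\partial_\xi^{l}g)(0,\eta)\,\delta^{(m-l)}_{\xi=0},\]
the total singular part of $\partial_\xi^{r}(F\widehat\varphi)$ reorganizes as $\sum_{j=0}^{r-3}\delta^{(j)}_{\xi=0}\otimes q_{j}(t,\eta)$, each $q_{j}(t,\eta)$ being a polynomial in $t$ (with at least one nonzero coefficient) whose coefficients are triangular linear combinations, in $i$, of the traces $(\partial_\xi^{i}\widehat\varphi)(0,\eta)$ for $0\leq i\leq r-3$. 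For $\partial_\xi^{r}(F\widehat\varphi)$ to lie in $L^{2}(\re^2)$ at any $t\neq 0$ every $q_{j}$ must vanish identically, and by triangularity this is equivalent to
\[(\partial_\xi^{j}\widehat\varphi)(0,\eta)=0\qquad\text{for a.e.\ }\eta,\ j=0,1,\dots,r-3.\]
Under these conditions only the smooth residuals of $\partial_\xi^{k}F$ survive, and they are handled exactly as in part (1), yielding $\||x|^{r}E(t)\varphi\|_{L^2}\leq P_{r}(t)\|\varphi\|_{\mathcal{F}_{r,0}^{s_1,s_2}}$; continuity of $t\mapsto E(t)\varphi$ in $\mathcal{F}_{r,0}^{s_1,s_2}$ then follows by dominated convergence on the Fourier side, with this pointwise estimate serving as majorant.

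The step I expect to be the main obstacle is the identification of the singular part of $\partial_\xi^{r}(F\widehat\varphi)$ in part (2) and, once reorganized in the basis $\{\delta^{(j)}_{\xi=0}\}_{j=0}^{r-3}$, the verification that the resulting linear system in the traces $(\partial_\xi^{i}\widehat\varphi)(0,\cdot)$ is triangular, so that the vanishing of every coefficient is equivalent to, rather than merely implied by, the condition on $\varphi$. The remaining estimates are bookkeeping in the Leibniz formula and routine invocations of Lemma \ref{interpolacion}.
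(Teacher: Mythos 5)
Your proposal is correct and follows essentially the same route as the paper: Plancherel, the Leibniz expansion of $\partial_\xi^{r}(F\widehat\varphi)$ using the explicit formulae of Lemma \ref{derivadas1}, the interpolation Lemma \ref{interpolacion} to control the mixed terms $\|x^{r-k}\partial_y^{2m}\varphi\|_{L^2}$, and the observation that $L^2$ membership forces the distributional ($\delta^{(j)}_{\xi=0}$) part to vanish, which yields the trace conditions. The only difference is presentational: you organize the converse for general $r$ as a triangular linear system in the traces $\partial_\xi^{j}\widehat\varphi(0,\cdot)$, whereas the paper treats $r=3$ explicitly and disposes of $r\geq 4$ by induction.
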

\begin{proof}
For $r=1$, from the above lemma we have,
\begin{align*}
\|E(t)\varphi\|_{\mathcal{F}_{1,0}^{s_1,s_2}}^2 & = \|E(t)\varphi\|_{s_1,s_2}^2+ \|xE(t)\varphi\|_{L^2}^2\\
&\leq  \|\varphi\|_{s_1,s_2}^2 + \int_{\re^2} |xE(t)\varphi|^2 \, dxdy \\
& = \|\varphi\|_{s_1,s_2}^2 + \int_{\re^2} |\partial_{\xi}(F(t,\xi,\eta)\widehat{\varphi})|^2 \, d\xi d\eta \\
&\leq   \|\varphi\|_{s_1,s_2}^2 + \int_{\re^2} |(\partial_{\xi}F(t,\xi,\eta))\widehat{\varphi}|^2 +|F(t,\xi,\eta) \partial_{\xi} \widehat{\varphi}|^2\, d\xi d\eta\\
&\leq   \|\varphi\|_{s_1,s_2}^2 + \int_{\re^2} |\frac{it\eta^2}{(1+|\xi|)^2}F(t,\xi,\eta))\widehat{\varphi}|^2 + |\partial_{\xi} \widehat{\varphi}|^2\, d\xi d\eta \\
&\leq  (1+t^2) \|\varphi\|_{s_1,s_2}^2+\|\varphi\|_{L_{1,0}^2}^2\\
& \leq P_{1}^2(t) \|\varphi\|_{\mathcal{F}_{1,0}^{s_1,s_2}}^2
\end{align*}
If $r=2$
\begin{align*}
  \|E(t)\varphi\|_{\mathcal{F}_{2,0}^{s_1,s_2}}^2  =& \|E(t)\varphi\|_{s_1,s_2}^2+ \|x^2E(t)\varphi\|_{L^2}^2\\
  =  &\|\varphi\|_{s_1,s_2}^2 + \int_{\re^2} |x^2E(t)\varphi|^2 \, dxdy \\
   = &\|\varphi\|_{s_1,s_2}^2 + \int_{\re^2}
  |\partial_{\xi}^2(F(t,\xi,\eta)\widehat{\varphi})|^2 \, d\xi d\eta\\
  \phantom{\|E(t)\varphi\|_{\mathcal{F}_{2,0}^{s_1,s_2}}^2} \leq&
  \|\varphi\|_{s_1,s_2}^2 + \int_{\re^2}
  \left(|(\partial_{\xi}^2F(t,\xi,\eta))\widehat{\varphi}|^2 +
    |(\partial_{\xi}F(t,
    \xi,\eta))\partial_{\xi}\widehat{\varphi}|^2\right) d\xi d\eta + \\
  &+\int_{\re^2} |F(t,\xi,\eta) \partial_{\xi}^2 \widehat{\varphi}|^2
  \, d\xi d\eta  \\
  \leq  &\|\varphi\|_{s_1,s_2}^2 + \int_{\re^2} \left | \left
      (\frac{2it\eta^2 \sgn(\xi)}{(1+|\xi|)^3}+ \frac{(it)^2
        \eta^4}{(1+|\xi|)^4}\right)F(t,\xi,\eta ) \widehat{\varphi}
  \right |^2 \, d\xi d\eta + \\
  &+ \int_{\re^2}\left |\frac{2it
      \eta^2}{(1+|\xi|)^2}F(t,\xi,\eta)\partial_{\xi}\widehat{\varphi}\right
  |^2 \, d\xi d\eta + \int_{\re^2} |\partial_{\xi}^2\varphi|^2 \, d\xi d\eta\\
  \leq  &\|\varphi\|_{s_1,s_2}^2 + 4t^2 \|\partial_y\varphi\|_{L^2}^2+
  t^4\|\partial_y^4 \varphi\|_{L^2}^2+ 4t^2 \|x\partial_y^2 \varphi\|_{L^2}^2 +\|x^2 \varphi\|_{L^2}^2 
\end{align*}
Now, since from Lemma  \ref{interpolacion}
\begin{equation}\label{eq:2.13}
\begin{aligned}
\|x \partial_y^2 \varphi\|^2 
& \leq \|\langle x \rangle J^2_y\varphi\|^2 \\
& \leq c\|\langle x \rangle^2 \varphi\|\|J_y^4\varphi\|\\
& \leq c( \|\varphi\|_{s_1,s_2}^2+ \|\varphi\|_{L_{2,0}^2}^2),
\end{aligned}
\end{equation}
we have that
$$
\|E(t)\varphi\|_{\mathcal{F}_{2,0}^{s_1,s_2}}^2   \leq P_{2}^2(t)
\|\varphi\|_{\mathcal{F}_{2,0}^{s_1,s_2}}^2.
$$

Assume that $r=3$ and $\widehat\varphi(0, \eta)=0$. Inasmuch as $t \delta F
\widehat{\varphi}=t \delta F(t,0,\eta) \widehat{\varphi}(0, \eta)=0$, then 
\begin{align*}
\|E(t)\varphi  \|_{\mathcal{F}_{3,0}^{s_1,s_2}}^2  =&
\|E(t)\varphi\|_{s_1,s_2}^2+ \|x^3E(t)\varphi\|_{ L^2}^2\\
\leq & \|\varphi\|_{s_1,s_2}^2 + \int_{\re^2} |x^3E(t)\varphi|^2 \, dxdy 2\\
 \le &\|\varphi\|_{s_1,s_2}^2 + \int_{\re^2} |\partial_{\xi}^3(F( t,
 \xi, \eta)\widehat{\varphi})|^2 \, d\xi d\eta \\
\leq  & \|\varphi\|_{s_1,s_2}^2 + \int_{\re^2} |(\partial_{ \xi}^3F( t, \xi,
\eta)) \widehat{\varphi}|^2 d\xi d\eta + |3(\partial_{\xi}^2F( t, \xi,
\eta))\partial_{\xi}\widehat{\varphi}|^2 d\xi d\eta +\\
&+\int_{\re^2} |3\partial_{\xi}F(t,\xi,\eta) \partial_{\xi}^2
\widehat{ \varphi}|^2\, d\xi d\eta + \int_{\re^2} |F(t, \xi,
\eta) \partial_{ \xi}^3 \widehat{\varphi}|^2\, d\xi d\eta \\
\leq  & \|\varphi\|_{s_1,s_2}^2 +\\ &+ \int_{\re^2} \left | \left
    (\frac{6it \eta^2}{(1+|\xi|)^4}+ \frac{6 (it)^2\eta^4}{
      (1+|\xi|)^5 }+\frac{(it)^3 \eta^6}{(1+|\xi|)^6}\right)F(t, \xi,
  \eta) \widehat{ \varphi} \right |^2  d\xi d\eta + \\ 
&+9\int_{\re^2} \left | \left (\frac{2it\eta^2 \sgn(\xi)}{(1+
      |\xi|)^3} +\frac{(it)^2 \eta^4}{(1+ |\xi|)^4} \right)F( t, \xi,
  \eta) \partial_{ \xi} \widehat{\varphi} \right |^2 \, d\xi d\eta \\
&+ 9\int_{\re^2}\left |\frac{2it \eta^2}{(1+|\xi|)^2}F(t,\xi,
  \eta) \partial_{\xi}^2 \widehat{\varphi}\right |^2\, d\xi d\eta +
\int_{\re^2} |\partial_{ \xi}^3\varphi|^2 \, d\xi d\eta\\ \leq & \|
\varphi\|_{ s_1,s_2}^2 + 36t^2\|\partial_y^2 \varphi\|_{L^2}^2+
36t^4\|\partial_y^4  \varphi\|_{L^2}^2+9 t^6\|\partial_y^6 \varphi\|_{
  L^2}^2 +\\&+36t^2\|x \partial_y ^2\varphi\|_{L^2}^2 +
9t^4\|x \partial_y^4 \varphi\|_{L^2}^2+ 36t^2\|x^2 \partial_y^2
\varphi\|_{ L^2}^2 + \|x^3 \varphi\|_{L^2}^2  
\end{align*}
Now, arguing as in \eqref{eq:2.13}, we have
\begin{align*}
\|x \partial_y^4 \varphi\|  
& \leq c ( \|\langle x \rangle^3 \varphi\| + \|J_y^6 \varphi\| ),\\
\|x^2 \partial_y^2 \varphi\|  
& \leq c ( \|\langle x \rangle^3 \varphi\| + \|J_y^6 \varphi\| ). \\
\end{align*}
Therefore,
$$
\|E(t)\varphi\|_{\mathcal{F}_{3,0}^{s_1,s_2}}^2   \leq P_{3}^2(t)
\|\varphi\|_{\mathcal{F}_{3,0}^{s_1,s_2}}^2.
$$
Let us suposse that $E(t)\varphi \in C([0,\infty) ; \mathcal{F }_{r,0}
^{s_1, s_2})$. Since $$\partial_\xi ^3 (F\varphi)= \sum_{k=0}^3
{3\choose k} \partial_\xi^{k} F \partial_\xi^{3-k}\widehat \varphi, $$
employing the same arguments we used above in this proof, we can see
$\partial_\xi^{k} F \partial_\xi^{3-k}\widehat \varphi\in L^2(\re^2)$,
for $k=0,1$ and $2$, and $\partial_\xi^{3} F \widehat \varphi -4(it)
\eta^2 \widehat \varphi(0,\eta)\delta \in L^2(\re^2)$. Then, it
follows that $4(it) \eta^2 \widehat \varphi(0,\eta)\delta\in
L^2(\re^2)$, which shows that $\widehat \varphi(0,\eta)=0$, for all
$\eta$.\par The proof for $r\ge 4$ is basically the same for $r=3$
with the help of an induction argument.
\end{proof}
Now, let us see the derivatives of the group with respect to the
variable $\eta$.
\begin{lema}\label{derivadas2}
  Let $F(t,\xi,\eta)=e^{b(\xi, \eta)t}$, where $b(\xi,\eta )=\frac{
    i\xi \eta^2}{1+ |\xi|}$, then, for $j \geq 1$,
\begin{align*}
  \partial_{\eta}^{2j}F(t, \xi, \eta)&= \sum
  \limits_{k=0}^{j}a_{k}\eta^{2k} \left( \frac{2i t \xi}{1+ |\xi|}
  \right)^{j+k}F(t,\xi,\eta) \\
  \intertext{and}
 \partial_{\eta}^{2j+1}F(t,\xi,\eta)&= \sum \limits_{k=0}^{j}
 a_{k}\eta^{2k+1} \left( \frac{2i t \xi}{1+|\xi|}
 \right)^{j+1+k}F(t,\xi,\eta)
\end{align*}
\end{lema}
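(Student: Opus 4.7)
The plan is to prove both identities simultaneously by induction on $j$, after observing the key structural fact that makes the form of the derivatives stable. Namely, with the abbreviation $c(\xi):=\frac{2it\xi}{1+|\xi|}$, we have $b(\xi,\eta)t=\tfrac12 c(\xi)\eta^{2}$, so
\[
\partial_{\eta}F(t,\xi,\eta)=c(\xi)\,\eta\, F(t,\xi,\eta).
\]
This first-order identity is what forces every higher $\eta$-derivative of $F$ to be a polynomial in $\eta$ and $c(\xi)$ multiplying $F$; the combinatorial bookkeeping of that polynomial is exactly what the lemma records.

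For the base case $j=1$ I would simply compute: from $\partial_{\eta}F=c\eta F$ one gets $\partial_{\eta}^{2}F=(c+c^{2}\eta^{2})F$ and $\partial_{\eta}^{3}F=(3c^{2}\eta+c^{3}\eta^{3})F$, which match the stated formulas with $a_{0}=a_{1}=1$ (even case) and $a_{0}=3,a_{1}=1$ (odd case).

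For the inductive step I would assume the even formula at level $j$ and differentiate once more using the product rule together with $\partial_{\eta}F=c\eta F$:
\[
\partial_{\eta}^{2j+1}F=\sum_{k=0}^{j}a_{k}\!\left(2k\,\eta^{2k-1}c^{j+k}+\eta^{2k}c^{j+k}\cdot c\eta\right)F.
\]
Reindexing the first sum by $m=k-1$ and combining with the second collects the result into $\sum_{k=0}^{j}\tilde a_{k}\eta^{2k+1}c^{j+1+k}F$, with $\tilde a_{k}=2(k+1)a_{k+1}+a_{k}$ for $k<j$ and $\tilde a_{j}=a_{j}$; this has exactly the shape asserted for $\partial_{\eta}^{2j+1}F$. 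An identical differentiation, starting from the odd formula at level $j$, produces $\partial_{\eta}^{2(j+1)}F$ in the stated even form, completing the induction.

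The proof is essentially a routine double-induction; there is no real obstacle beyond careful index-shifting. The only point that deserves attention is checking that the highest and lowest powers of $\eta$ indeed stay within the stated range $0\le 2k\le 2j$ (respectively $1\le 2k+1\le 2j+1$) after each differentiation — that is, verifying that differentiation does not push the index $k$ outside $\{0,\dots,j\}$ in the next step — and that the resulting coefficients still depend only on $k$ (and not on $\xi$, $\eta$, or $t$), which is transparent from the recursion $\tilde a_{k}=2(k+1)a_{k+1}+a_{k}$.
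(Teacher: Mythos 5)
Your proof is correct. The paper states Lemma \ref{derivadas2} without any proof at all, and your double induction based on the single identity $\partial_{\eta}F=c(\xi)\,\eta\,F$ with $c(\xi)=\tfrac{2it\xi}{1+|\xi|}$ is exactly the natural argument the authors evidently have in mind; the only cosmetic caveat is that the constants $a_k$ also depend on $j$ (and differ between the even and odd formulas), which is harmless since all that matters is that they are absolute constants independent of $\xi$, $\eta$ and $t$.
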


Now we have the following corollary analogous to Corollary
\ref{pesoenx},  however unlike this one it is not requered the
restrictive condition in item 2.
\begin{corol}\label{pesoeny}
  Let $E$ be as in Corollary \ref{pesoenx}. Then, for $s_1,\
  s_2 \in \re$ and $r \in \mathbb{N}$ with $s_2 \geq r$, we have
\begin{equation}
  \|E(t)\varphi\|_{\mathcal{F}_{0,r}^{s_1,s_2}} \leq P_r(t)\|\varphi\|_{\mathcal{F}_{0,r}^{s_1,s_2}},
\end{equation}
where $P_r(t)$ is a polynomial of degree $r$ with positives  coefficients.
\end{corol}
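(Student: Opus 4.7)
The plan is to mimic the strategy of Corollary \ref{pesoenx}, but now the situation is cleaner because the symbol $b(\xi,\eta)=i\xi\eta^2/(1+|\xi|)$ is smooth in $\eta$, so no Dirac masses ever appear in $\partial_\eta^j F$ and no vanishing condition on $\widehat\varphi$ is needed. First, I would control the purely anisotropic Sobolev part: since $|F(t,\xi,\eta)|=1$ and $F$ acts as a Fourier multiplier, $\|E(t)\varphi\|_{s_1,s_2}=\|\varphi\|_{s_1,s_2}$ trivially, so it only remains to estimate $\|y^rE(t)\varphi\|_{L^2}$.

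Next, by Plancherel,
\[
\|y^rE(t)\varphi\|_{L^2}^2 \;=\; \bigl\|\partial_\eta^r\bigl(F(t,\xi,\eta)\widehat\varphi(\xi,\eta)\bigr)\bigr\|_{L^2}^2,
\]
and I would expand via Leibniz as $\sum_{k=0}^{r}\binom{r}{k}(\partial_\eta^k F)(\partial_\eta^{r-k}\widehat\varphi)$. Lemma \ref{derivadas2} gives each $\partial_\eta^k F$ as a finite sum of monomials of the form $a_\ell\,\eta^{m(\ell)}\bigl(2it\xi/(1+|\xi|)\bigr)^{p(\ell)}F$, with $m(\ell)\le k$ and $p(\ell)\le k$. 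Using the two crucial bounds $|F|=1$ and $|\xi/(1+|\xi|)|\le 1$, each such term is dominated pointwise by $C\,t^{p(\ell)}(1+|\eta|^k)$. Therefore, after taking $L^2$ norms and using the identity $|\eta|^{j}\partial_\eta^{r-k}\widehat\varphi \leftrightarrow \partial_y^{j}(y^{r-k}\varphi)$ under Fourier, the estimate reduces to controlling finitely many quantities of the form $\|\partial_y^{j}(y^{r-k}\varphi)\|_{L^2}$ with $j\le k$ and $0\le k\le r$, each multiplied by a polynomial in $t$ of degree at most $r$.

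To bound those mixed weighted-Sobolev norms by the $\mathcal F_{0,r}^{s_1,s_2}$ norm I would invoke Lemma \ref{interpolacion} with $u=v=y$, $a=b=r$, and $\theta=j/r$, together with Young's inequality, obtaining
\[
\|\partial_y^{j}(y^{r-k}\varphi)\|_{L^2} \;\le\; c\,\bigl(\|\langle y\rangle^{r}\varphi\|_{L^2}+\|J_y^{r}\varphi\|_{L^2}\bigr) \;\le\; c\,\|\varphi\|_{\mathcal F_{0,r}^{s_1,s_2}},
\]
where the last step uses the hypothesis $s_2\ge r$ so that $\|J_y^r\varphi\|_{L^2}\le\|\varphi\|_{s_1,s_2}$. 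Summing the finitely many terms with their $t$-polynomial coefficients produces a polynomial $P_r(t)$ of degree $r$ with positive coefficients, completing the estimate.

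The routine but slightly delicate point is keeping track of the degrees in $t$ when combining the Leibniz expansion with the structure of Lemma \ref{derivadas2}; the main conceptual obstacle is essentially absent here, since the absence of $\delta$-terms (in contrast with Corollary \ref{pesoenx}) removes any obstruction to continuity in $t$ and any restriction on the vanishing of $\widehat\varphi$ at $\xi=0$. The induction on $r$ is then straightforward: the base cases $r=1,2$ are explicit direct calculations like the ones in the previous corollary, and the inductive step is a repetition of the above Leibniz-plus-interpolation scheme.
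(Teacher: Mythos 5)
Your proposal is correct and follows essentially the same route as the paper: Plancherel plus the Leibniz expansion of $\partial_\eta^r(F\widehat\varphi)$, the pointwise bounds on $\partial_\eta^jF$ from Lemma \ref{derivadas2} via $|F|=1$ and $|\xi|/(1+|\xi|)\le 1$, and then Lemma \ref{interpolacion} with Young's inequality (the argument of \eqref{eq:2.13}) to absorb the mixed weight--derivative terms into $\|\varphi\|_{\mathcal F_{0,r}^{s_1,s_2}}$ using $s_2\ge r$. The paper simply carries out the general-$r$ computation directly rather than phrasing it as an induction, but the content is identical.
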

\begin{proof}
 For $r \in \mathbb{N}$ we have
\begin{align*}
\|E(t)\varphi\|_{\mathcal{F}_{0,r}^{s_1,s_2}}^2 & = \|E(t)\varphi\|_{s_1,s_2}^2+ \|E(t)\varphi\|_{L_{0,r}^2}^2\\
&\leq c\|\varphi\|_{s_1,s_2}^2 + c\int_{\re^2} |y^rE(t)\varphi|^2 \, dxdy \\
& =c \|\varphi\|_{s_1,s_2}^2 + \int_{\re^2}
|\partial_{\eta}^r (F(t,\xi, \eta)\widehat{\varphi})|^2 \, d\xi d\eta\\
&\leq  c \|\varphi\|_{s_1,s_2}^2 + \int_{\re^2}
\sum_{j=0}^{r}|c_j \partial_{\eta} ^{j}F(t,\xi,\eta)\partial_{
  \eta}^{r-j} \widehat{\varphi}|^2 \, d\xi d\eta \\
&\leq  c \|\varphi\|_{s_1,s_2}^2 + \sum_{j=0}^{r}\int_{\re^2}|c_j
( \partial_{\eta}^{j}
F(t,\xi,\eta)\partial_{\eta}^{r-j} \widehat{\varphi }|^2 \, d\xi d\eta\\ 
&\leq  c \|\varphi\|_{s_1,s_2}^2 +
\sum_{j=0}^{r}\int_{\re^2}|c_jp_{j}(t,\eta) \partial_{\eta}^{r-j}\widehat{\varphi
}|^2 \, d\xi d\eta \\
& \leq \|\varphi\|_{s_1,s_2}^2 +  Q(t)\|\varphi\|_{\mathcal{F}_{0,r}^{s_1,s_2}}^2 \\
& \leq P_{j}^2(t)\|\varphi\|_{\mathcal{F}_{0,r}^{s_1,s_2}}^2
\end{align*}
where $p_j(t,\eta)$ is a polynomial in $\eta$ of degree $j$ (whose
coefficients of the powers with parity different to that of $j$ are
zero) and $P_j$ is a polynomial in $t$ of degree $j$. Here, to estimate
the integrals where appear $p_j(t,\eta)\partial_\eta ^{r-j}\widehat
\varphi$, $j=0,1,\cdots,r$, we proceed as in \eqref{eq:2.13}.
\end{proof}

Thanks to $\mathcal{F}_{r_1,r_2 }^{ s_1,s_2}=\mathcal{F}_{
  r_1,0}^{ s_1,s_2}\cap \mathcal{F}_{0,r_2}^{s_1,s_2}$, from
Corollaries \ref{pesoeny} y \ref{pesoenx}, it follows immediately the
next corollary. 
\begin{corol}\label{pesoxy}
  Let $E$ be as in Corollaries \ref{pesoenx} and \ref{pesoeny}. If
  $s_1$, $s_2 \in \re$, $r_1=0,1,2$ and $r_2 \in \mathbb{N}$ with $s_2
  \geq \max(2r_1,r_2)$, we have
  \[\|E(t)\varphi\|_{\mathcal{F}_{r_1,r_2}^{s_1,s_2}} \leq
  P_r(t)\|\varphi\|_{\mathcal{F}_{r_1,r_2}^{s_1,s_2}},\] where
  $P_r(t)$ is a polynomial of degree $r=\max(r_1,r_2)$ with positive
  coefficients.
\end{corol}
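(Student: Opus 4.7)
The plan is to use the intersection identity $\mathcal{F}_{r_1,r_2}^{s_1,s_2}=\mathcal{F}_{r_1,0}^{s_1,s_2}\cap \mathcal{F}_{0,r_2}^{s_1,s_2}$, which at the level of norms means
\[
\|\varphi\|_{\mathcal{F}_{r_1,r_2}^{s_1,s_2}}^2 = \|\varphi\|_{s_1,s_2}^2+\||x|^{r_1}\varphi\|_{L^2}^2+\||y|^{r_2}\varphi\|_{L^2}^2,
\]
so that $\|\varphi\|_{\mathcal{F}_{r_1,r_2}^{s_1,s_2}}^2\le \|\varphi\|_{\mathcal{F}_{r_1,0}^{s_1,s_2}}^2+\|\varphi\|_{\mathcal{F}_{0,r_2}^{s_1,s_2}}^2$ and conversely each of the two right-hand summands is controlled by $\|\varphi\|_{\mathcal{F}_{r_1,r_2}^{s_1,s_2}}^2$. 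This reduces the problem to estimating $E(t)\varphi$ separately in the two ``one-sided'' weighted spaces.

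For the $x$-weighted part, since the hypothesis gives $r_1\in\{0,1,2\}$ and $s_2\ge 2r_1$, Corollary \ref{pesoenx} applies in its case (1) (no vanishing condition on $\widehat\varphi(0,\eta)$ is needed), yielding
\[
\|E(t)\varphi\|_{\mathcal{F}_{r_1,0}^{s_1,s_2}}\le P_{r_1}(t)\,\|\varphi\|_{\mathcal{F}_{r_1,0}^{s_1,s_2}}\le P_{r_1}(t)\,\|\varphi\|_{\mathcal{F}_{r_1,r_2}^{s_1,s_2}},
\]
with $P_{r_1}$ a polynomial of degree $r_1$ with positive coefficients. For the $y$-weighted part, the hypothesis $s_2\ge r_2$ is exactly what Corollary \ref{pesoeny} requires, so
\[
\|E(t)\varphi\|_{\mathcal{F}_{0,r_2}^{s_1,s_2}}\le P_{r_2}(t)\,\|\varphi\|_{\mathcal{F}_{0,r_2}^{s_1,s_2}}\le P_{r_2}(t)\,\|\varphi\|_{\mathcal{F}_{r_1,r_2}^{s_1,s_2}},
\]
with $P_{r_2}$ of degree $r_2$, also with positive coefficients.

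Combining these via the decomposition of the norm gives
\[
\|E(t)\varphi\|_{\mathcal{F}_{r_1,r_2}^{s_1,s_2}}^2\le \bigl(P_{r_1}(t)^2+P_{r_2}(t)^2\bigr)\|\varphi\|_{\mathcal{F}_{r_1,r_2}^{s_1,s_2}}^2,
\]
and then setting $P_r(t):=P_{r_1}(t)+P_{r_2}(t)$ produces a polynomial in $t$ of degree $r=\max(r_1,r_2)$ with positive coefficients that dominates the right-hand side, giving exactly the desired conclusion. There is essentially no obstacle here: the authors themselves point out that the corollary follows ``immediately'' from the two previous ones, and the only minor bookkeeping is to confirm that the intersection-norm equality and the hypotheses $s_2\ge 2r_1$, $s_2\ge r_2$ line up precisely with the assumptions required by Corollaries \ref{pesoenx} and \ref{pesoeny}.
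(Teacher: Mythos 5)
Your proposal is correct and follows exactly the route the paper indicates: it uses the decomposition $\mathcal{F}_{r_1,r_2}^{s_1,s_2}=\mathcal{F}_{r_1,0}^{s_1,s_2}\cap \mathcal{F}_{0,r_2}^{s_1,s_2}$ and applies Corollaries \ref{pesoenx} and \ref{pesoeny} to the two pieces, which is precisely what the authors mean when they say the corollary follows immediately from those two results. Your bookkeeping of the degrees and the hypotheses $s_2\ge 2r_1$, $s_2\ge r_2$ is accurate, so nothing further is needed.
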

Let us examine, now, the Cauchy problem of the linear equation
associated to the equation \eqref{eq:rBOZK} in weighted spaces $\mathcal
F_{r_1,r_2}^{s_1,s_2}$ where $r_1$ and $r_2$ are non-integers real
numbers. In this case we use in a systematic way the Stein
derivative. In fact, in the next three lemmas we will give estimates
of the Stein derivatives of some functions that appear throughout our
discussion in the last part of this section.

\begin{lema}\label{lem:2.19}
Let $b \in (0,1)$. For all $t>0$,
\begin{equation}
\mathcal{D}_x^b ( F(t,x,\eta)) =\mathcal{D}_x^b ( e^{\frac{it \eta^2
    x}{1+|x| }})  \leq C(b) t^b \eta^{2b}
\end{equation}
\end{lema}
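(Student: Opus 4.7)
The plan is to exploit two elementary bounds on the phase function and then balance them using a standard split-the-integral argument based on the definition of the Stein derivative.

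First I would introduce the auxiliary function $g(x) = x/(1+|x|)$, so that $F(t,x,\eta) = e^{i t \eta^2 g(x)}$, and observe that $g'(x) = (1+|x|)^{-2}$ for $x\ne 0$, whence $g$ is $1$-Lipschitz on $\re$. Combined with the elementary estimates $|e^{ia}-e^{ib}|\le |a-b|$ and $|e^{ia}-e^{ib}|\le 2$, this yields
\[
|F(t,x,\eta)-F(t,y,\eta)| \;\le\; \min\!\bigl(t\eta^{2}|x-y|,\,2\bigr),
\]
which is the only structural information about $F$ that I would use.

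Next, starting from the definition
\[
\mathcal{D}^{b}_{x} F(t,x,\eta)^{2}=\int_{\re}\frac{|F(t,x,\eta)-F(t,y,\eta)|^{2}}{|x-y|^{1+2b}}\,dy,
\]
I would split the integral at $|x-y|=r$ (a free parameter to be chosen), apply the Lipschitz bound on the inner piece and the trivial bound $2$ on the outer piece:
\[
\mathcal{D}^{b}_{x} F(t,x,\eta)^{2}\le t^{2}\eta^{4}\!\!\int_{|x-y|\le r}\!\!|x-y|^{1-2b}\,dy+4\!\!\int_{|x-y|>r}\!\!|x-y|^{-1-2b}\,dy.
\]
Both integrals are explicit and yield constants times $t^{2}\eta^{4}r^{2-2b}$ and $r^{-2b}$, respectively. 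Optimizing in $r$ (equivalently, choosing $r\sim (t\eta^{2})^{-1}$, which is natural since that is the scale at which the two bounds on $|F(t,x,\eta)-F(t,y,\eta)|$ meet) gives
\[
\mathcal{D}^{b}_{x} F(t,x,\eta)^{2}\le C(b)\bigl(t\eta^{2}\bigr)^{2b},
\]
which is the claimed pointwise bound after taking square roots.

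I do not expect any real obstacle here; the only things to be slightly careful about are that the constants depend on $b\in(0,1)$ (both exponents $1-2b>-1$ and $-1-2b<-1$ make the local and tail integrals finite), and that the degenerate case $\eta=0$ is trivial since then $F\equiv 1$ and both sides vanish. The whole argument is local in $x$ and uniform in $x$, so the bound is genuinely a pointwise bound independent of $x$.
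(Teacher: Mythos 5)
Your proof is correct, and it is tighter than the one in the paper while resting on the same two elementary facts ($|e^{ia}-e^{ib}|\le\min(|a-b|,2)$ and a near/far split of the Stein integral). The decisive simplification is your observation that the phase $g(x)=x/(1+|x|)$ satisfies $|g'(x)|=(1+|x|)^{-2}\le 1$, hence is globally $1$-Lipschitz; this gives the uniform bound $|F(t,x,\eta)-F(t,y,\eta)|\le\min(t\eta^2|x-y|,2)$ in one line and makes the subsequent split at $|x-y|=r\sim(t\eta^{2})^{-1}$ completely symmetric in $x$. The paper instead performs an explicit change of variables $y\mapsto\eta^{2}t\,y$ to pull the factor $(\eta^{2}t)^{2b}$ out front, and then must estimate the rescaled phase difference by hand; this forces a case analysis on the sign of $x$, on whether $\eta^{2}tx\lessgtr 1$, and on the half-lines $y<x$ and $y>x$ separately (with a somewhat delicate bound such as $|{\cdots}|\le 3|y|$ on one of the pieces). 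What the paper's route buys is explicit numerical constants (e.g.\ $\tfrac{13}{2(1-b)}+\tfrac 6b$ for the squared derivative); what yours buys is brevity, manifest uniformity in $x$, and constants with the same qualitative blow-up $C(b)^{2}\sim\tfrac 1{1-b}+\tfrac 1b$ as $b\to 0^{+}$ or $b\to 1^{-}$. Your parenthetical remarks — that $1-2b>-1$ and $-1-2b<-1$ guarantee convergence of both pieces, and that $\eta=0$ is trivial — cover the only points where care is needed, so there is no gap.
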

\begin{proof}
From the homogeneous derivative definition we have
\begin{align*}
\mathcal{D}_x^b ( e^{\frac{it \eta^2 x}{1+|x| }})^2 = \int_{\re}
\frac{ |e^{\frac{it \eta^2 x}{1+|x| }}-e^{ \frac{it \eta^2 y}{1+|y|
    }}|^2}{ |x-y|^{n+2b}} \, dy =\int_{\re} \frac{|1-e^{it\eta^2 (
    \frac{ x-y}{1+|x-y| }-\frac{x}{1+|x|} )}|^2}{|y|^{1+2b}} \, dy 
\end{align*}
Let us suposse $x>0$. Then, making a change of variable we have
\begin{multline*}
  \int_{-\infty}^{x}  \frac{|1-e^{it\eta^2 ( \frac{x-y}{1+x-y
      }-\frac{x}{1+x} )}|^2}{|y|^{1+2b}} \,  dy =
  \int_{-\infty}^{\eta^2 tx}  \frac{|1 -e^{
        \frac{-iy}{(1+x-\frac{y}{\eta^2t})(1+x)}}|^2}{|y|^{1+2b}} \,
    dy =\\= (\eta^2
  t)^{2b}\int_{-\infty}^{-1} + \int_{-1}^{\eta^2 tx}  \frac{|1-e^{
        \frac{-iy}{(1+x-\frac{y}{\eta^2t})(1+x)}}|^2}{|y|^{1+2b}} \,
    dy 
\end{multline*}
Now, let us examine each of the integrals that appear in the above
equation. Since $|1-e^{ix}|<2$, we have
\begin{equation}
\int_{-\infty}^{-1} \frac{|1-e^{
    \frac{-iy}{(1+x-\frac{y}{\eta^2t})(1+x)}}|^2}{|y|^{1+2b}} \, dy
\leq  \int_{-\infty}^{-1} \frac{4}{|y|^{1+2b}} \, dy= \frac{2}{b}
\end{equation}
If $\eta^2 t x \le 1$, from the mean value theorem, 
\[\int_{-1}^{\eta^2 tx} \frac{|1-e^{ \frac{-iy }{( 1+ x-
      \frac{y}{\eta^2t}) (1+x)}} |^2}{|y|^{1+2b}} \, dy \leq
\int_{-1}^1 \frac{ y^2}{|y|^{1+2b}}\, dy = \frac{1}{1-b} \]
If $\eta^2 xt >1$, combining the arguments used above, we have
\begin{align*}
 \int_{-1}^{\eta^2 xt} \frac{|1-e^{
    \frac{-iy}{(1+x-\frac{y}{\eta^2t})(1+x)}}|^2}{|y|^{1+2b}} \, dy
&\leq \int_{-1}^{1} + \int_{1}^{\eta^2 xt} \frac{|1-e^{
    \frac{-iy}{(1+x-\frac{y}{\eta^2t})(1+x)}}|^2}{|y|^{1+2b}} \, dy \\
&\leq \frac{ 1}{1-b}+\frac{2}{b}
\end{align*}
So that,
\begin{equation}\label{eq:2.17}
\int_{-\infty}^x \frac{|e^{\frac{it \eta^2 x}{1+|x| }}-e^{\frac{it \eta^2
      y}{1+|y| }}|^2}{|x-y|^{1+2b}} \, dy \leq
\left(\frac{2}{1-b}+\frac{4}{b} \right)(\eta^2 t)^{2b}
\end{equation}
On the other hand, making a change of variable, we have
\begin{align*}
  \int_x^{\infty} \frac{|1-e^{it\eta^2 ( \frac{x-y}{1-x+y
      }-\frac{x}{1+x} )}|^2}{|y|^{1+2b}} \, dy = (\eta^2
  t)^{2b}\int^{\infty}_{\eta^2 tx} \frac{|1-e^{
      \frac{i(2x^2\eta^2t-2xy-y)}{(1-x+\frac{y}{\eta^2t})(1+x)}}|^2}{|y|^{1+2b}
  } \, dy
\end{align*}
If $t\eta^2x >1$,
\[ \int^{\infty}_{\eta^2 tx} \frac{|1-e^{
    \frac{i(2x^2\eta^2t-2xy-y)}{(1-x+\frac{y}{\eta^2t})(1+x)}}|^2}{|y|^{1+2b}
} \, dy \leq \int^{\infty}_{1}\frac{4}{|y|^{1+2b}} \, dy =
\frac{2}{b}.\] Now, if $x>0$ and $y\ge x\eta^2 t$, it has 
$1-x+\frac{y}{\eta^2t}\ge 1$ and, thence,
\begin{align*}
\left | \frac{i(2x^2\eta^2t-2xy-y)}{(1-x+\frac{y}{\eta^2t})(1+x)}
\right  | & \leq \left | \frac{2x}{1+x} \right | \left |
  \frac{x\eta^2t-y }{(1-x+\frac{y}{\eta^2t})} \right | + \left |
  \frac{y }{(1-x+\frac{y}{\eta^2t})(1+x)} \right | \\
& \leq 2|x\eta^2t-y|+|y| \leq 3|y|.
\end{align*}
Then, if $t\eta^2x <1$,
\begin{align*}\int_{x\eta^2t}^\infty \frac{|1-e^{
      \frac{i(2x^2\eta^2t-2xy-y)}{(1+x-\frac{y}{\eta^2t})(1-x)}}|^2}{|y|^{1+2b}
  } \, dy =& \int_{x\eta^2t}^1+\int_{1}^\infty \frac{|1-e^{
      \frac{i(2x^2\eta^2t-2xy-y)}{(1-x+\frac{y}{\eta^2t})(1+x)}}|^2}{|y|^{1+2b}
  } \, dy \\ \le & \int_0^1\frac{9y^2}{|y|^{1+2b}}\,  dy +
  \int_{1}^{\infty} \frac{4}{|y|^{2b+1} }\,  dy=\frac{9}{ 2(1-b)}+ \frac2b.
\end{align*}
So,
\begin{equation}\label{eq:2.18}
\int_{x}^\infty \frac{|e^{\frac{it \eta^2 x}{1+|x| }}-e^{\frac{it
      \eta^2 y}{1+|y| }}|^2}{|x-y|^{1+2b}} \, dy \leq \left(
  \frac{9}{ 2(1-b) }+\frac{2}{b} \right)(\eta^2 t)^{2b}
\end{equation}
Therefore, if $x>0$, from \eqref{eq:2.17} and \eqref{eq:2.18}, we obtain
\begin{equation*}
\int_{-\infty}^\infty \frac{|e^{\frac{it \eta^2 x}{1+|x| }}-e^{\frac{it
      \eta^2 y}{1+|y| }}|^2}{|x-y|^{1+2b}} \, dy \leq \left(
  \frac{13}{ 2(1-b) }+\frac{6}{b} \right)(\eta^2 t)^{2b}
\end{equation*}

The proof of the estimate in the case where $x<0$ is completely analogous.
\end{proof}
\begin{corol}\label{corol:2.19}
Let $b \in (0,1)$. For all $t>0$,
\begin{equation}
\mathcal{D}_x^b (\sgn(x)( F(t,x,\eta)-1)) =\mathcal{D}_x^b (
\sgn(x)(e^{\frac{it \eta^2 x}{1+|x| }} -1 ))  \leq C(b) t^b \eta^{2b}
\end{equation}
\end{corol}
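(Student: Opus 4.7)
\medskip\noindent\textbf{Proof plan.} Set $g(x)=\sgn(x)(F(t,x,\eta)-1)$ and observe first that $F(t,0,\eta)=1$, so $g(0)=0$; the jump of $\sgn$ at the origin is cancelled by the vanishing of $F-1$ there, and $g$ is actually continuous on $\re$. The natural strategy is to mimic the proof of Lemma \ref{lem:2.19}, but to split the defining integral
\[
\bigl(\mathcal{D}_x^b g(x)\bigr)^2 = \int_{\re}\frac{|g(x)-g(y)|^2}{|x-y|^{1+2b}}\,dy
\]
according to whether $\sgn(x)=\sgn(y)$ or not. In the first region one has
\[
|g(x)-g(y)|=|F(t,x,\eta)-F(t,y,\eta)|,
\]
so the contribution is bounded by $\bigl(\mathcal{D}_x^b F(t,\cdot,\eta)(x)\bigr)^2\le C(b)^2 t^{2b}\eta^{4b}$ directly from Lemma \ref{lem:2.19}.

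The nontrivial piece is the region where $x$ and $y$ have opposite signs; by symmetry it suffices to treat $x>0$, $y<0$. There
\[
|g(x)-g(y)|=|(F(t,x,\eta)-1)+(F(t,y,\eta)-1)|\le |F(t,x,\eta)-1|+|F(t,y,\eta)-1|,
\]
so after using $(a+b)^2\le 2a^2+2b^2$ we are reduced to two integrals. For both I would use the elementary pointwise bound
\[
|F(t,z,\eta)-1|=\Bigl|e^{it\eta^2 z/(1+|z|)}-1\Bigr|\le \min\!\Bigl(2,\tfrac{t\eta^2|z|}{1+|z|}\Bigr)\le \min(2,t\eta^2|z|).
\]
The first integral becomes $|F(t,x,\eta)-1|^2\int_{-\infty}^{0}(x-y)^{-1-2b}\,dy = \dfrac{|F(t,x,\eta)-1|^2}{2b\,x^{2b}}$, which a separate analysis according to $t\eta^2 x\lessgtr 1$ shows is $\le C(b)(t\eta^2)^{2b}$.

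The second integral is the main work. After the substitution $u=-y$ and dropping $x$ in $(x+u)^{1+2b}\ge u^{1+2b}$, one gets
\[
\int_0^{\infty}\frac{\min(4,(t\eta^2 u)^2)}{u^{1+2b}}\,du;
\]
the further change $v=t\eta^2 u$ factors out exactly $(t\eta^2)^{2b}$, leaving $\int_0^{2}v^{1-2b}\,dv+\int_2^{\infty}4v^{-1-2b}\,dv$, which is a finite constant $C(b)$ because $0<b<1$. Combining the three contributions yields $\mathcal{D}_x^b g(x)\le C(b)t^b\eta^{2b}$, uniformly in $x$ and $\eta$, which is the claim.

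\medskip\noindent\textbf{Main obstacle.} The only delicate step is the mixed-sign region: naively one might try to apply the Leibniz-type inequality from Theorem item (a) with $f=\sgn$ and $g=F(t,\cdot,\eta)-1$, but $\mathcal{D}^b\sgn\notin L^\infty$, so a direct product rule is unavailable. What makes the bound work is that $F(t,\cdot,\eta)-1$ vanishes linearly at $0$ with modulus controlled by $t\eta^2|x|$, and this linear decay matches exactly the singularity produced by the sign jump at the origin in the Stein integral. Quantifying this cancellation is the content of the scaling step above.
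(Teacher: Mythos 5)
Your argument is correct and follows essentially the same route as the paper: both split off the contribution from the region where $y$ has the opposite sign to $x$ and exploit that $F(t,\cdot,\eta)-1$ vanishes at the origin to absorb the singularity created by the sign flip. The only difference is cosmetic — where you bound the cross term $\int_{-\infty}^{0}|F(t,y,\eta)-1|^2|x-y|^{-1-2b}\,dy$ by hand via the scaling substitution $v=t\eta^2 u$, the paper simply notes $|y|<|x-y|$ for $y<0<x$ and recognizes the resulting integral as $\bigl(\mathcal{D}_x^b F(t,\cdot,\eta)(0)\bigr)^2$, which Lemma \ref{lem:2.19} already controls.
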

\begin{proof}
Without loss of generality we suposse  $x>0$. Then
\begin{align*}
\mathcal{D}_x^b ( \sgn(x)(e^{\frac{it \eta^2 x}{1+|x| }}& -1))= \left(\int_{\re}
\frac{ |\sgn(x)(e^{\frac{it \eta^2 x}{1+|x| }}-1)-\sgn(y)(e^{ \frac{it \eta^2 y}{1+|y|
    }}-1)|^2}{ |x-y|^{n+2b}} \, dy\right)^{1/2} \\ &\le \left(\int_{\re} \frac{|e^{\frac{it
      \eta^2 x}{1+|x| }}-e^{ \frac{it \eta^2
      y}{1+|y|}}|^2}{|x-y|^{1+2b}} \, dy  \right)^{1/2}+2\left(\int_{-\infty}^0 \frac{|e^{ \frac{it \eta^2
      y}{1+|y|}}-1 |^2}{|x-y|^{1+2b}} \, dy  \right)^{1/2}.
\end{align*}
Since $|y|<|x-y|$, for $y<0$, from above lemma it follows the corollary.
\end{proof}
\begin{lema}\label{lem:2.20}
If $0<b<1$, then 
\begin{equation}\label{eq:2.19}
  \mathcal{D}_x^b \left( \frac{1}{(1+|x|)^n}\right) \leq
  \frac{C}{(1+|x|)^{1/2} }
\end{equation}
\end{lema}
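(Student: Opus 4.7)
The plan is to bound the defining integral
\[
\mathcal{D}_x^b f(x)^2 = \int_{\re} \frac{|f(x)-f(y)|^2}{|x-y|^{1+2b}}\,dy, \qquad f(x)=(1+|x|)^{-n},
\]
by splitting the domain of integration into the near region $\{|x-y|<1\}$ and the far region $\{|x-y|\ge 1\}$ and analyzing each separately. This mirrors the kind of decomposition used in the previous lemma and corollary, where the local contribution is controlled by smoothness and the tail by integrability.

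For the near region I would use the mean value theorem together with the elementary bound $|f'(\xi)|\le n(1+|\xi|)^{-(n+1)}$. Since $|x-y|<1$ forces any $\xi$ between $x$ and $y$ to satisfy $1+|\xi|\gtrsim 1+|x|$, this gives $|f(x)-f(y)|\le C|x-y|(1+|x|)^{-(n+1)}$. Inserting this estimate and using $\int_{|x-y|<1}|x-y|^{1-2b}\,dy<\infty$ (which holds precisely because $b<1$), the local contribution is at most $C(1+|x|)^{-2n-2}$.

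For the far region I would apply $|f(x)-f(y)|^2\le 2|f(x)|^2+2|f(y)|^2$. The $|f(x)|^2$ piece is immediate, since $\int_{|x-y|\ge 1}|x-y|^{-1-2b}\,dy\le C/b$, producing at most $C(1+|x|)^{-2n}$. The $|f(y)|^2$ piece is the delicate one, and I would split it further according to whether $|y|\ge |x|/2$ or $|y|<|x|/2$. On $\{|y|\ge |x|/2\}$ we have $|f(y)|\le C(1+|x|)^{-n}$, so this subregion again contributes $C(1+|x|)^{-2n}$. On $\{|y|<|x|/2\}$ we have $|x-y|\ge |x|/2$, which extracts a factor $(1+|x|)^{-(1+2b)}$ from the denominator, and the remaining integral $\int|f(y)|^2\,dy$ is finite since $n\ge 1$; this subregion therefore contributes at most $C(1+|x|)^{-(1+2b)}$.

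Combining all three bounds yields
\[
\mathcal{D}_x^b f(x)^2 \le C\bigl((1+|x|)^{-2n-2}+(1+|x|)^{-2n}+(1+|x|)^{-1-2b}\bigr).
\]
Since $n\ge 1$ and $0<b<1$, each exponent on the right is at least $1$, so the right-hand side is dominated by $C(1+|x|)^{-1}$; taking square roots gives the desired estimate. The main obstacle I expect is the subregion $\{|x-y|\ge 1,\ |y|<|x|/2\}$, where both $|f(y)|^2$ is not small and $|x-y|$ is not quite of order $|x|$ uniformly; the key observation is that the extra factor $|x|^{-(1+2b)}$ extracted from the denominator there, combined with the global integrability of $|f|^2$, is exactly what makes the exponent in the conclusion come out uniform in $b\in(0,1)$ at the threshold value $1/2$.
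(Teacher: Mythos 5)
Your argument is correct and reaches the stated bound, but it takes a genuinely different route from the paper's in the tail estimate, which is the nontrivial part. Near the diagonal the two proofs coincide in substance: the paper puts the difference over a common denominator and bounds $(1+|y|)^n-(1+|x|)^n$ by $n(2+|x|+|y|)^{n-1}|y-x|$, which is exactly your mean value theorem step, and both then use the integrability of $|x-y|^{1-2b}$ on $\{|x-y|<1\}$. The divergence is on $\{|x-y|\ge 1\}$: the paper keeps the gain $|y-x|^{1-2b}$ there, which forces a case distinction between $b\in(0,\tfrac12)$ (where this factor grows and must be traded against the decay of $(1+|y|)^{-2n}$ on the two subregions $|y-x|\le|y|$ and $|y|<|y-x|$) and $b\in[\tfrac12,1)$ (where it is bounded by $1$). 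You instead discard the cancellation altogether via $|f(x)-f(y)|^2\le 2|f(x)|^2+2|f(y)|^2$ and split on $|y|\gtrless|x|/2$; this removes the case analysis in $b$ and needs only that $f$ is bounded, square integrable and decays like $(1+|x|)^{-n}$ with $n\ge1$, so your argument generalizes beyond the specific function $(1+|x|)^{-n}$. Both proofs isolate the same critical region: your set $\{|x-y|\ge 1,\ |y|<|x|/2\}$ plays the role of the paper's set $B$, and in both it is this region that produces the limiting decay (your $(1+|x|)^{-1-2b}$ versus the paper's $(1+|x|)^{-1}$ for the square), whence the exponent $1/2$ after taking square roots. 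In both versions the constant degenerates as $b\to0$ and $b\to1$, which is harmless for the applications since $b$ is fixed there.
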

\begin{proof}
It is clear that
\begin{align*}
\left(\mathcal{D}_x^b \left( \frac{1}{(1+|x|)^n}\right) \right)^2  =& \int_{\re}
\frac{\left | \frac{1}{ (1+|x|)^n}-\frac{1}{(1+|y|)^n}\right
  |^2}{|y-x|^{1+2b}} \, dy \\
\leq&\frac{1}{(1+|x|)^{2n}}\int_{\re} \frac{(2+|y|+|x|
  )^{2(n-1)}|y-x|^{1-2b}}{(1+|y|)^{2n}} \,  dy\\
\leq &\frac{1}{(1+|x|)^{2n}} \bigg( \int_{|x-y|< 1}
\frac{(2+|y|+|x|)^{2(n-1)}|y-x|^{1-2b}}{(1+|y|)^{2n}} \, dy +
\end{align*}
\begin{align*}
  \phantom{\mathcal{D}_x^b \left( \frac{1}{(1+|x|)^2}\right)^2
    =}&+\int_{|y-x|\ge 1}\frac{(2+ |y|+ |x| )^{2(n-1)}|y- x|^{1-2b}
  }{(1+|y|)^{2n} }\, dy \bigg) 
\end{align*}
Now 
\begin{align*}
\int_{x-1
}^{x+1}\frac{(2+|y|+|x|)^{2(n-1)}|y-x|^{1-2b}}{(1+|y|)^{2n}} \, dy &\leq
2(2+|x|)^{2(n-1)} \int_0^{1} y^{1-2b} \, dy  \\
& \leq C\frac{(1+|x|)^{2(n-1)}} {1-b}
\end{align*}
 On the other hand, if $b \in (0, 1/2)$,
\begin{align*}
  \int_{|y-x|\ge 1}\frac{(2+ |y|+ |x| )^{2(n-1)}|y- x|^{1-2b}
  }{(1+|y|)^{2n} }\, dy  \le \int_A +\int_B \frac{(2+ |y|+ |x| )^{
      2(n-1)}| y- x|^{1-2b}
  }{(1+|y|)^{2n} }\, dy 
\end{align*}
where $A=\{y\in \re\mid 1\le |y-x|\le |y|\}$ and $B=\{y\in \re\mid 1\le
|y-x| \text{ y } |y|<|y-x|\}$. Since
\begin{align*}
\int_A \frac{(2+ |y|+ |x|
  )^{2(n-1)}|y- x|^{1-2b} }{(1+|y|)^{2n} }\, dy \le &\int_A \frac{(2+ |y|+ |x|
  )^{2(n-1)}|y|^{1-2b} }{(1+|y|)^{2n} }\, dy \\\le &C(1+|x|)^{2(n-1)} \int_\re
(1+|y|)^{-1-2b}\, dy\\ \le &\frac Cb(1+|x|)^{2(n-1)}
\end{align*}
and
\begin{align*}
\int_B \frac{(2+ |y|+ |x|
  )^{2(n-1)}|y- x|^{1-2b} }{(1+|y|)^{2n} }\, dy \le & 2^{2b}\int_A \frac{(2+ |y|+ |x|
  )^{2(n-1)}(|y|+|x|)(1+|y|)^{-2b} }{(1+|y|)^{2n} }\, dy \\\le &C(1
+|x|)^{ 2n-1} \int_\re
(1+|y|)^{-1-2b}\, dy\\ \le &\frac Cb(1+|x|)^{2n-1},
\end{align*}
in this case we have $$ \int_{|y-x|\ge 1}\frac{(2+ |y|+ |x| )^{2(n-1)}|y- x|^{1-2b}
  }{(1+|y|)^{2n} }\, dy  \le \frac Cb(1+|x|)^{2n-1}.$$
If $b \in [1/2,1)$, then
\begin{align*}
\int_{|x-y|\ge 1}\frac{(2+ |y|+|x|)^{2(n-1)}| y-x|^{ 1-2b}} {(1+
  |y|)^{2n}} \, dy & \leq
\int_{ \re} \frac{(2+|y|+|x|)^{2(n-1)}}{(1+|y|)^{2n}} \, dy\\
& \leq C(1+|x|)^{2(n-1)}\int_{\re} \frac{1}{(1+|y|)^2} \, dy \\
& \leq C(1+|x|)^{2(n-1)}.
\end{align*}
So, $$\left( \mathcal{D}_x^b \left( \frac{1}{(1+|x|)^n}\right)
\right)^2  \le \frac C{b(1+|x|)}.$$
\end{proof}

\begin{lema}\label{steiny}
Let $0<b<1$. For $t>0$ 
\begin{equation}
\mathcal{D}_{\eta}^b ( e^{\frac{ it\xi \eta^2}{( 1+|\xi|)}}) \leq
c\left( \left( \frac{t\xi}{(1+|\xi|)}\right)^{b/2}+\left( \frac{
      t\xi}{(1 +|\xi|)}\right)^{b}|\eta|^b\right)
\end{equation}
\end{lema}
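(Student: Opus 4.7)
The plan is to compute the Stein derivative directly from its definition, reduce it via a translation and a scaling to a one-parameter model integral, and then read off the asymptotics of that integral by a standard small-/large-argument split.

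First I would write $\alpha=\frac{t\xi}{1+|\xi|}$ and use the identity $e^{i\alpha\eta^2}-e^{i\alpha\tau^2}=e^{i\alpha\eta^2}\bigl(1-e^{i\alpha(\tau^2-\eta^2)}\bigr)$. After the change of variable $u=\tau-\eta$, we have $\tau^2-\eta^2=u(u+2\eta)=u^2+2u\eta$, so that
$$\mathcal{D}_\eta^{b}\bigl(e^{i\alpha\eta^2}\bigr)^2=\int_{\re}\frac{\bigl|1-e^{i\alpha(u^2+2u\eta)}\bigr|^2}{|u|^{1+2b}}\,du.$$
This already removes the $\eta^2$ phase factor and pushes all dependence on $\eta$ into the exponent, which is crucial for the scaling step.

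Next I would set $A=|\alpha|$ and rescale by $v=u\sqrt{A}$, $\beta=\eta\sqrt{A}$. A quick computation gives $du/|u|^{1+2b}=A^{b}\,dv/|v|^{1+2b}$ and $\alpha(u^2+2u\eta)=\pm(v^2+2v\beta)$, so the integral becomes $A^{b}J(\beta)$ with
$$J(\beta)=\int_{\re}\frac{\bigl|1-e^{\pm i(v^2+2v\beta)}\bigr|^2}{|v|^{1+2b}}\,dv.$$
The lemma now reduces to the scale-free estimate $J(\beta)\le C(1+|\beta|^{2b})$, because undoing the rescaling yields $\mathcal{D}_\eta^{b}(e^{i\alpha\eta^2})^2\lesssim A^{b}+A^{2b}|\eta|^{2b}$, whose square root is exactly the right-hand side of the claim.

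To bound $J(\beta)$, assume $\beta\ge 0$ without loss of generality, let $M=\max(1,\beta)$, and split at $|v|=1/M$. On $|v|\le 1/M$ one has $|v^2+2v\beta|\le 3|v|M$, so the mean value bound $|1-e^{i\phi}|\le|\phi|$ gives a contribution $\lesssim M^{2}\int_{|v|\le 1/M}|v|^{1-2b}\,dv\lesssim M^{2b}$. On $|v|>1/M$ I would use $|1-e^{i\phi}|\le 2$ to get a contribution $\lesssim\int_{|v|>1/M}|v|^{-1-2b}\,dv\lesssim M^{2b}$. Together these give $J(\beta)\lesssim M^{2b}\le 1+\beta^{2b}$, finishing the proof.

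The main obstacle is really just choosing the split point $1/M$ correctly: it is the value at which the quadratic bound $|\phi|\le 3|v|M$ and the trivial bound $2$ coincide, so both halves contribute the same order $M^{2b}$. Once this is noticed, the calculation is analogous in spirit to the one already carried out in Lemma \ref{lem:2.19}, and no new tool beyond the definition of $\mathcal{D}^{b}$ is required.
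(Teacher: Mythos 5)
Your proof is correct: the translation $u=\tau-\eta$ correctly turns the phase difference into $\alpha(u^2+2u\eta)$, the rescaling $v=u\sqrt{|\alpha|}$ produces exactly the factor $|\alpha|^{b}$ in front of the scale-free integral $J(\beta)$ with $\beta=\eta\sqrt{|\alpha|}$, and the split of $J$ at $|v|=1/M$ with $M=\max(1,\beta)$ yields $J(\beta)\lesssim M^{2b}\le 1+\beta^{2b}$, which upon undoing the scaling and taking square roots gives precisely the claimed bound (with the harmless understanding that $t\xi/(1+|\xi|)$ in the statement should carry an absolute value). The paper does not prove this lemma at all; it simply cites Proposition 2 of \cite{napo}, so your argument is a genuinely self-contained alternative. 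It is also structurally cleaner than the related Lemma \ref{lem:2.19} in the paper, which handles a similar oscillatory phase by a case-by-case analysis ($\eta^2tx\le 1$ versus $\eta^2tx>1$, $x>0$ versus $x<0$) after a less symmetric change of variables: your reduction to a single model integral $J(\beta)$ via translation-plus-scaling isolates the two scales ($|\alpha|^{b/2}$ from the pure quadratic part and $|\alpha|^{b}|\eta|^{b}$ from the linear-in-$u$ part) in one stroke, at the cost of nothing beyond the definition of the Stein derivative. The one point worth making explicit if this were written up is the evenness $J(\beta)=J(-\beta)$ under $v\mapsto -v$, which justifies the reduction to $\beta\ge0$; you assert it without comment, but it is immediate.
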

\begin{proof}
See Proposition 2 in \cite{napo}.  
\end{proof}
Now, we show two propositions extending the Corollaries
\ref{pesoenx} and \ref{pesoeny} the case where the exponent from
weight is real.
\begin{prop}\label{pesoenxr} 
  Let $E$ be as in Corollary \ref{pesoenx}.  For $s_1$, $s_2 \in
  \re$ and $0<r<\frac52$ such that $s_2 \geq 2r$, we have
  \begin{equation}\label{eq:2.21}
\|E(t)\varphi\|_{\mathcal{F}_{r,0}^{s_1,s_2}} \leq
  C(t)\|\varphi\|_{\mathcal{F}_{r,0}^{s_1,s_2}},
\end{equation}
where $C(t)$ is an increasing continuous function in $t$.
\end{prop}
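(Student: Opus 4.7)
\emph{Plan.} The argument is a case analysis in $r$, following the template of Corollary \ref{pesoenx} for integer weights and the template of Corollary \ref{acotop} for non-integer ones, adapted to the time-dependent multiplier $F(t,\xi,\eta)=e^{b(\xi,\eta)t}$ in place of the stationary symbol $\xi/(1+|\xi|)$ used there.

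For $0<r\le 2$, I would obtain the estimate by interpolation via Lemma \ref{interpolacion} applied between the integer endpoints $r=0,1,2$ already handled in Corollary \ref{pesoenx}; the resulting constant $C(t)$ is a convex combination of $P_0\equiv 1$, $P_1(t)$ and $P_2(t)$, hence continuous and increasing in $t$. For $2<r<5/2$, write $r=2+b$ with $b\in(0,1/2)$. Passing to Fourier variables in $x$ and using the characterization of $H^{2+b}_\xi$ via the Stein derivative, one reduces to controlling
\[\||x|^r E(t)\varphi\|_{L^2(\re^2)}\;\lesssim\;\|\mathcal{D}_\xi^b\partial_\xi^2(F\widehat\varphi)\|_{L^2(\re^2)}+\text{lower-order terms}.\]
Expand $\partial_\xi^2(F\widehat\varphi)$ by Leibniz and use Lemma \ref{derivadas1} to write
\[\partial_\xi F=\frac{it\eta^2}{(1+|\xi|)^2}F,\qquad \partial_\xi^2 F=-\frac{2it\eta^2\,\sgn\xi}{(1+|\xi|)^3}F+\frac{(it)^2\eta^4}{(1+|\xi|)^4}F.\]
Distribute $\mathcal{D}_\xi^b$ by the Leibniz-type inequality \eqref{equ1} and estimate each factor with the tools already assembled: $\mathcal{D}_\xi^b F\le C(b)t^b|\eta|^{2b}$ by Lemma \ref{lem:2.19}; $\mathcal{D}_\xi^b\bigl(\sgn\xi\,(F-1)\bigr)\le C(b)t^b|\eta|^{2b}$ by Corollary \ref{corol:2.19}; $\mathcal{D}_\xi^b((1+|\xi|)^{-n})\le C(1+|\xi|)^{-1/2}$ by Lemma \ref{lem:2.20}; and the remaining factors $\widehat\varphi$, $\partial_\xi\widehat\varphi$, $\partial_\xi^2\widehat\varphi$, $\mathcal{D}_\xi^b\widehat\varphi$ correspond in physical space to $\varphi$, $x\varphi$, $x^2\varphi$, $|x|^b\varphi$, all bounded by $\|\varphi\|_{\mathcal{F}_{r,0}^{s_1,s_2}}$ (the fractional one via the Case~1 bound). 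The $|\eta|^{2k}$ powers with $k\le r$ that appear are absorbed using $s_2\ge 2r$ together with Lemma \ref{interpolacion}, exactly as in the model interpolation \eqref{eq:2.13}.

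The main obstacle is the $\mathcal{D}_\xi^b$ estimate on the first summand of $\partial_\xi^2 F$: the factor $\sgn\xi$ jumps at the origin and its bare Stein derivative fails to lie in $L^2$ locally. The trick, already used for the operator $B$ in Corollary \ref{acotop}, is to exploit $F(t,0,\eta)=1$ and rewrite $\sgn\xi\cdot F=\sgn\xi\,(F-1)+\sgn\xi$: the first piece is continuous at $\xi=0$ and is controlled by Corollary \ref{corol:2.19}, while the second is a pure Hilbert-transform-type remainder handled via Theorem \ref{condicionAp} on the weighted $L^2((1+x^2)^b\,dx\,dy)$. Beyond this, the only nontrivial bookkeeping is the careful tracking of the $\eta$-powers generated by the successive $\xi$-differentiations, all of which are ultimately absorbed into the anisotropic norm under the hypothesis $s_2\ge 2r$. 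Summing the contributions yields a constant $C(t)$ depending polynomially on $t^b$ and $t$, in particular continuous and increasing in $t>0$.
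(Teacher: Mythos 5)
Your treatment of the range $2<r<\frac52$ is essentially the paper's own argument: write $\||x|^{2+b}E(t)\varphi\|_{L^2}=\|D_\xi^b\partial_\xi^2(F\widehat\varphi)\|_{L^2}$, expand by Leibniz using Lemma \ref{derivadas1}, distribute $\mathcal D_\xi^b$, and invoke Lemmas \ref{lem:2.19} and \ref{lem:2.20} together with Theorem \ref{condicionAp} (via the $A_2$ weight $|x|^{2b}$, $0<b<\tfrac12$) for the $\sgn\xi$ term, absorbing the $\eta$-powers through Lemma \ref{interpolacion} and $s_2\ge 2r$. (Your extra splitting $\sgn\xi\cdot F=\sgn\xi\,(F-1)+\sgn\xi$ is not needed here --- the paper reserves it for the borderline case $b=\tfrac12$ in the unique continuation theorem, where the $A_2$ argument fails --- but it does no harm.)

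The genuine gap is your treatment of non-integer $r\in(0,2)$. Lemma \ref{interpolacion} interpolates between a weight in one variable and derivatives (in the same or the other variable); it does not interpolate an operator bound between two different weight exponents $r=0,1,2$, so it cannot be ``applied between the integer endpoints.'' Nor does the elementary log-convexity $\||x|^{r}g\|_{L^2}\le\||x|^{a}g\|_{L^2}^{1-\theta}\||x|^{c}g\|_{L^2}^{\theta}$ save the argument: applied to $g=E(t)\varphi$ with $c=\lceil r\rceil$ it yields a bound by $\|\varphi\|_{\mathcal F_{\lceil r\rceil,0}^{s_1,s_2}}$, which is \emph{not} controlled by $\|\varphi\|_{\mathcal F_{r,0}^{s_1,s_2}}$, and a true interpolation of the couples $\bigl(H^{s_1,s_2},\mathcal F_{2,0}^{s_1,s_2}\bigr)$ is not established in the paper and would in any case force $s_2\ge 4$ rather than the claimed $s_2\ge 2r$ (the endpoint bounds for $E(t)$ are not pure weighted-$L^2$ bounds: $\partial_\xi F$ produces $\eta^2$ factors, so the weight and the $y$-regularity are entangled). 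The correct route --- and the one the paper takes --- is to run for $r=b\in(0,1)$ and $r=1+b\in(1,2)$ exactly the same direct Stein-derivative computation you describe for $r=2+b$, applied to $F\widehat\varphi$ and to $\partial_\xi(F\widehat\varphi)$ respectively: the Leibniz rule for $\mathcal D_\xi^b$ plus $\mathcal D_\xi^b F\le C(b)t^b\eta^{2b}$ produce the terms $\|D_y^{2b}\varphi\|_{L^2}$, $\||x|^bD_y^2\varphi\|_{L^2}$ and $\|xD_y^{2b}\varphi\|_{L^2}$, which are then absorbed by Lemma \ref{interpolacion} under $s_2\ge 2r$. You should replace the interpolation shortcut by this computation.
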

\begin{proof}
  First, let us suposse $r=b$, with $0 < b < 1$. From Stein derivative
  properties and Lemmas \ref{lem:2.19} and \ref{lem:2.20}, we have
\begin{align*}
\||x|^{r} E(t) \varphi \|_{L^2} & = \|D_{\xi}^b (F\widehat{\varphi})\|_{L^2} \\
& \leq c( \|F\widehat{\varphi}\|_{L^2} + \|\mathcal{D}_{\xi}^b
(F\widehat{\varphi})\|_{ L^2} )\\
& \leq
c(\|\varphi\|_{L^2}+\|F \mathcal{D}_{\xi}^b( \widehat{\varphi})
\|_{L^2}+\|\widehat{\varphi } \mathcal{D}_{\xi}^bF\|_{L^2})\\
& \leq c (\|\varphi\|_{L^2}+ \| \mathcal{D}_{ \xi}^b(\widehat{
  \varphi}) \|_{L^2}  +\|c(b)t^b \eta^{2b} \widehat{ \varphi}\|_{L^2} )\\
& \leq c (\|\varphi\|_{L^2}+ \|\widehat{|x|^b\varphi}\|_{L^2}
+c(b)t^b\| \widehat{D_y^{2b}\varphi}\|_{L^2} )\\
& \leq c ( \|\varphi\|_{L^2}+ \||x|^b\varphi\|_{L^2} +c(b) t^b\|
D_y^{2b} \varphi\|_{L^2} )\\
& \leq c \|\varphi\|_{\mathcal{F}_{r,0}^{s_1,s_2}}
\end{align*}

Now, if $1<r<2$, $r=1+b$, for some
$0<b<1$. Then, since from Lemma \ref{interpolacion},
\begin{align*}
\||x|^b D^2_y\varphi\|^2_{L^2} \leq &c (\|\langle x \rangle ^{1+b}
\varphi \|_{L^2}^{2} +\|J_y^{2+2b} \varphi\|_{L^2}^{2} )\\
\intertext{and}
\|x D^{2b}_y\varphi\|_{L^2}  \leq &c( \|\langle x \rangle
^{1+b}\varphi\|_{L^2}^{2} +\|J_y^{2+2b} \varphi\|_{L^2}^{2}), 
\end{align*}
again, from Stein derivative properties and Lemas \ref{derivadas1},
\ref{lem:2.19} and \ref{lem:2.20}, we obtain
\begin{align*}
\||x|^{r} E(t) \varphi \|_{L^2}  = & \|D_{\xi}^b ( \partial_{\xi}(
F\widehat{ \varphi}))\|_{L^2} \\
 \leq &c( \| \partial_{\xi}( F\widehat{\varphi})\|_{L^2} +
 \|\mathcal{D}_{\xi}^b (\partial_{ \xi} (F\widehat{\varphi}))\|_{L^2} \\
 \leq &c(\|xE(t) \varphi\|_{L^2}  + \|\mathcal{D}_{\xi}^b
(\widehat{\varphi} \partial_{\xi}F )\|_{L^2} +\|\mathcal{D}_{\xi}^b
(F \partial_{\xi} \widehat{\varphi})\|_{L^2} \\  \leq &c\bigg( P_1(t)
\|\varphi \|_{\mathcal F^{s_1,s2}_{r,0}}+
\left\| \mathcal{D}_{\xi}^b\left( \widehat{\varphi}
    \frac{it\eta^2}{(1+|\xi|)^2}F \right)
\right \|_{L^2} + \|\partial_{\xi} \widehat{ \varphi}
\mathcal{D}_{\xi}^b F\|_{L^2}+ \\ 
& + \|F \mathcal{D}_{\xi}^b(\partial_{\xi} \widehat{\varphi})\| \bigg)
\\ \leq &c(t)\bigg( \|\varphi
\|_{\mathcal F^{s_1,s2}_{r,0}}+ \left\| \frac{1}{(1+|\xi|)^2}
\right\|_{\infty}\| \mathcal{D}_{\xi}^b ( \eta^2 F
\widehat{\varphi})\|_{L^2} + \\
& + \left\| \eta^2 F \widehat{\varphi} \mathcal{D}_{\xi}^b \left(
    \frac{1}{(1+|\xi|)^2} \right) \right\|_{L^2}+ \|c(b)t^b
\eta^{2b} \partial_{\xi} \widehat{\varphi}\|_{L^2} + 
\|\mathcal{D}_{\xi}^{1+b} \widehat{\varphi}\|_{L^2}\bigg) \\
 \leq &c(t)\bigg( \|\varphi
\|_{\mathcal F^{s_1,s2}_{r,0}} +\|\mathcal{D}_{\xi}^b(\eta^2
F)\widehat{\varphi}\|_{L^2}+ \|\eta^2 F \mathcal{D}_{\xi}^b
\widehat{\varphi}\|_{L^2}+ \| xD_y^{2b} \varphi\|_{L^2} \bigg)\\
 \leq &c(t,b)( \|\varphi \|_{\mathcal F^{s_1,s2}_{r,0}} +
 \|D_y^{2+2b}\varphi\|_{L^2}+\||x|^b D^2_y\varphi\|_{L^2}  + \|
 J_y^{2b+2}\varphi\|_{L^2}+ \\ &+ \|\langle x \rangle^{1+b} \varphi\|_{L^2} )\\
 \leq &c(t) \left (\|\varphi\|_{\mathcal{F}_{r,0}^{s_1,s_2}} + \|
 J_y^{2b+2}\varphi\|_{L^2}+ \|\langle x \rangle^{1+b} \varphi\|_{L^2}
\right) \\
 \leq &c(t) \|\varphi\|_{\mathcal{F}_{r,0}^{s_1,s_2}}.
\end{align*}

Finally, we suposse $2<r<\dfrac52$, or, in other words, soposse
$r=2+b$ with $0<b<\frac{1}{2}$. Then, from Lemma \ref{derivadas1},
\begin{equation}\label{segundaderivada}
  \begin{aligned}
      \||x|^{2+b} E(t) \varphi \|_{L^2}= &\|D_{\xi}^b ( \partial_{\xi}^2(F
  \widehat{\varphi}))\|_{L^2} \\
\le &\left \|D_{\xi}^b \left( \frac{2it \eta^2 \sgn(\xi)}{(1+|\xi|)^3}
  F\widehat{\varphi}\right) \right \|_{L^2} + \left\|D_{\xi}^b \left(\frac{t^2
      \eta^4}{(1+|\xi|)^4}F\widehat{\varphi}\right)\right \|_{L^2} +\\
&+ \left \|D_{\xi}^b \left(
      \frac{2it \eta^2}{(1+|\xi|)^2}F \partial_{\xi}\widehat{\varphi}
    \right) \right\|_{L^2}+  \left\|D_{\xi}^b \left( F \partial_{\xi}^2
      \widehat{\varphi}\right) \right\|_{L^2} 
  \end{aligned}
\end{equation}
Let us estimate each of the terms in the right side of the last
inequality. Proceeding as in the above case, the estimate of
fourth term is thus 
\begin{equation}\label{estimativa1}
\begin{split}
  \|D_{\xi}^b (F\partial_{\xi}^2 \widehat{\varphi})\|_{L^2} &
  \leq c(\|F\partial_{\xi}^2 \widehat{ \varphi}\|_{L^2}+\|\mathcal
  D_{\xi}^b(F\partial_{\xi}^2 \widehat{\varphi}) \|_{L^2})\\
  & \leq c(\|\partial_{\xi}^2 \widehat{\varphi} \|_{L^2}+\|F\mathcal
  D_{\xi}^b(\partial_{ \xi}^2
  \widehat{\varphi})\|_{L^2}+\|\partial_{\xi}^2 \widehat{\varphi}
  \mathcal D_{ \xi}^bF\|_{L^2}) \\
  & \leq c( \|x^2 \varphi\|_{L^2}+ \||x|^{2+b}
  \varphi\|_{L^2}+\|c(b)t^b
  \eta^{2b}\partial_{\xi}^2  \widehat{\varphi} \|_{L^2}) \\
  & \leq c( \|x^2 \varphi\|_{L^2}+ \||x|^{r}
  \varphi\|_{L^2}+c(b)t^{b}\|x^2 D_y^{2b}\varphi \|_{ L^2})\\
  & \leq c(t) \|\varphi\|_{\mathcal{F}_{r,0}^{s_1,s_2}},
\end{split}
\end{equation}
where we use the following inequality, that follows from Lemma
\ref{interpolacion} and Young inequality,
\begin{align*}
\|x^2 D^{2b}_y\varphi\|_{L^2}  \leq \|\langle x \rangle
^{2+b}\varphi\|_{L^2} +\|J^{4+2b}_y \varphi\|_{L^2} 
\end{align*}
In the same way we have for the second and the third term the following
estimates 
\begin{equation}\label{estimativa2}
  \left \|D_{\xi}^b \left(\frac{t^2 \eta^4}{(1+|\xi|)^4}F
      \widehat{\varphi} \right) \right\|_{L^2} \le c(t) \| \varphi\|_{
    \mathcal{F}_{r,0}^{s_1,s_2} }
\end{equation}
and 
\begin{equation}\label{estimativa3}
 \left \|D_{\xi}^b \left(\frac{2t\eta^2}{(1+|\xi|)^2}F \widehat{
   \varphi}\right) \right\|_{L^2} \le c(t) \| \varphi\|_{\mathcal{F}_{r,0}^{s_1,s_2}} 
\end{equation}
The treatment for the first term has the following slight difference,
thanks to the Example \ref{Ap2} and the Theorem
\ref{condicionAp}, for $0<b<\frac12$, we obtain
$$
\begin{aligned}
 \left \|D_{\xi}^b \left(\frac{2t\sgn(\xi)\eta^2}{(1+|\xi|)^3}  F
     \widehat{\varphi} \right) \right\|_{L^2}  &\leq 2t \left\||x|^b
         \mathscr{H}\left(\frac{ \eta^2}{(1+|\xi|)^3}  F
     \widehat{ \varphi} \right)^{\vee}\right\|_{L^2} \\ & \le ct\left\||x|^b
        \left(\frac{ \eta^2}{(1+|\xi|)^3}  F
     \widehat{ \varphi} \right)^{\vee}\right\|_{L^2} \\ & \le ct\left\| D_{\xi}^b
       \left(\frac{ \eta^2}{(1+|\xi|)^3}  F
     \widehat{ \varphi} \right)\right\|_{L^2} 
\end{aligned}
$$
Now, proceeding as before, we have
\begin{equation}\label{estimativa4}
 \left \|D_{\xi}^b \left(\frac{2t\sgn(\xi)\eta^2}{(1+|\xi|)^3}F \widehat{
   \varphi}\right) \right\|_{L^2} \le c(t) \| \varphi\|_{\mathcal{F}_{r,0}^{s_1,s_2}} 
\end{equation}
Then, from \eqref{segundaderivada}, \eqref{estimativa1},
\eqref{estimativa2}, \eqref{estimativa3} and \eqref{estimativa4}, it
follows \eqref{eq:2.18}, for $2<r<\frac52$.  This completes the proof
of the proposition.
\end{proof}
\begin{prop}\label{pesoenyr} 
  Let $E$ be as in Corollary \ref{pesoenx}.  For $s_1$, $s_2 \in
  \re$ and $0<r$ such that $s_2 \geq r$, we have
  \begin{equation}\label{eq:2.27}
\|E(t)\varphi\|_{\mathcal{F}_{0,r}^{s_1,s_2}} \leq
  C(t)\|\varphi\|_{\mathcal{F}_{0,r}^{s_1,s_2}},
\end{equation}
is an increasing continuous function in $t$.
\end{prop}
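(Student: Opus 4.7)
The plan is to mimic Corollary \ref{pesoeny} and Proposition \ref{pesoenxr}: the Sobolev part is trivial because $|F|\equiv 1$ gives $\|E(t)\varphi\|_{s_{1},s_{2}}\le\|\varphi\|_{s_{1},s_{2}}$, so by Plancherel it is enough to bound
$$\||y|^{r}E(t)\varphi\|_{L^{2}}=\|D_{\eta}^{r}(F\widehat{\varphi})\|_{L^{2}}.$$
Write $r=m+b$ with $m$ a nonnegative integer and $0\le b<1$. The integer case $b=0$ is exactly Corollary \ref{pesoeny}, so from now on I assume $0<b<1$.

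First I would expand $\partial_{\eta}^{m}(F\widehat{\varphi})$ using Lemma \ref{derivadas2} as a finite sum of terms of the form
$$c_{j}\;q_{j}(t,\xi)\;\eta^{k_{j}}\;F\;\partial_{\eta}^{m-j}\widehat{\varphi},\qquad 0\le j\le m,\ 0\le k_{j}\le m,$$
where $q_{j}(t,\xi)=\bigl(2it\xi/(1+|\xi|)\bigr)^{j+k_{j}}$ is uniformly bounded in $\xi$ by a polynomial in $t$, since $|\xi|/(1+|\xi|)\le 1$. Using the Stein-derivative characterization of $L^{2}_{r}(\re)$, it suffices then to estimate each summand in $L^{2}$ and, after applying $\mathcal{D}_{\eta}^{b}$, to estimate the resulting quantity in $L^{2}$ as well. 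For the first estimate, each term corresponds via Plancherel to an expression of the form $\|\partial_{y}^{k_{j}}(y^{m-j}\varphi)\|_{L^{2}}$ which, by Leibniz and Lemma \ref{interpolacion}(b), is controlled by $C(t)\|\varphi\|_{\mathcal{F}_{0,r}^{s_{1},s_{2}}}$ because $s_{2}\ge r$; this is the same mechanism already exploited in \eqref{eq:2.13} and the proof of Corollary \ref{pesoeny}.

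For the Stein piece I would apply the product inequality
$$\mathcal{D}_{\eta}^{b}(GH)\le\|G\|_{\infty}\mathcal{D}_{\eta}^{b}H+|H|\mathcal{D}_{\eta}^{b}G$$
to each summand with $G=q_{j}(t,\xi)\eta^{k_{j}}F$ (note $q_{j}F$ is bounded in $(\xi,\eta)$, and the Stein derivative of $\eta^{k_j}$ is classical). Lemma \ref{steiny} bounds $\mathcal{D}_{\eta}^{b}F$ by $C(t^{b/2}+t^{b}|\eta|^{b})$, again using $|\xi|/(1+|\xi|)\le 1$. The contribution in which $\mathcal{D}_{\eta}^{b}$ falls on $\widehat{\varphi}$ becomes, via Plancherel and the Stein characterization, a norm of the type $\||y|^{m-j+b}\partial_{y}^{k_{j}}\varphi\|_{L^{2}}$ with $k_{j}+(m-j+b)\le r$, and the contribution from $\mathcal{D}_{\eta}^{b}F$ produces at worst $\|\,|\eta|^{b}\eta^{k_{j}}\partial_{\eta}^{m-j}\widehat{\varphi}\,\|_{L^{2}}$, which by Plancherel is of the same type. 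All such mixed norms are handled by Lemma \ref{interpolacion}(b) as an interpolation between $\|\langle y\rangle^{r}\varphi\|_{L^{2}}$ and $\|J_{y}^{r}\varphi\|_{L^{2}}$, the latter being controlled by $\|\varphi\|_{s_{1},s_{2}}$ thanks to $s_{2}\ge r$.

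The main obstacle is the endpoint cross-term in which $\mathcal{D}_{\eta}^{b}$ hits $F$, producing the factor $|\eta|^{b}$, and then multiplies the largest factor $\eta^{k_{j}}\partial_{\eta}^{m-j}\widehat{\varphi}$ coming out of Lemma \ref{derivadas2}; here the weight exponent reaches exactly $r=m+b$, with no slack. The interpolation inequality of Lemma \ref{interpolacion}(b) together with the assumption $s_{2}\ge r$ is precisely what is needed to close this endpoint case, and summing the finitely many contributions yields \eqref{eq:2.27} with $C(t)$ a polynomial in $t$ (hence continuous and increasing in $t\ge 0$).
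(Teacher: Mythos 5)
Your proposal is correct and follows essentially the same route as the paper: reduce to $\|\mathcal{D}_\eta^b(\partial_\eta^n(F\widehat{\varphi}))\|_{L^2}$ with $r=n+b$, expand via Leibniz and Lemma \ref{derivadas2}, apply the Stein product inequality together with Lemma \ref{steiny}, and close the endpoint terms with Lemma \ref{interpolacion} and $s_2\ge r$. The only cosmetic difference is your grouping $G=q_j\eta^{k_j}F$ (which is not itself $L^\infty$); the paper keeps $\eta^{2k}$ attached to $\partial_\eta^{n-m}\widehat{\varphi}$ inside the Stein derivative, which is also what your subsequent estimates implicitly do.
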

\begin{proof}
Let $n\in\nat$ and $0<b<1$ be such that $r=n+b$. Then,
\begin{equation}\label{eq:2.28}
\begin{aligned}
\||y|^{r} E(t) \varphi \|_{L^2}  &\le c(\|\phi\|_{\mathcal{F}_{0,r}^{s_1,s_2}} +\| \mathcal D_{\eta}^b
(\partial_\eta^n(F\widehat{\varphi}))\|_{L^2}) \\ &\leq c\left(
\|\phi\|_{\mathcal{F}_{0,r}^{s_1,s_2}} + \sum_{m=0}^n \| \mathcal D_{\eta}^b (\partial_\eta
^{m}F \partial_\eta^{n-m}\widehat{\varphi}) \|_{L^2}\right)
\end{aligned}
\end{equation}
Let us examine each of the terms in the sum in the right side from
inequality above. We suposse without loss generality that $m$ is even,
that means, $m=2j$. From Lemma \ref{derivadas2} and the Stein
derivetive properties it follows that
$$
\begin{aligned}
\| \mathcal D_{\eta}^b (\partial_\eta
^{m}F \partial_\eta^{n-m}\widehat{\varphi}) \|_{L^2} \le
&\sum_{k=0}^{j}a_{k} \left\| \mathcal D_{\eta}^b \left( \eta^{2k} \left(
    \frac{2i t \xi}{1+ |\xi|} \right)^{j+ k}F \partial_
  \eta^{ n-m}\widehat{\varphi}\right)
  \right\|_{L^2} \\ \le & c(t)\sum_{k=0}^{j} \left( \left\| \mathcal
    D_{\eta}^bF  \eta^{2k}  \partial_  \eta^{ n-m}\widehat{\varphi}
  \right\|_{L^2} + \left\| F\mathcal
    D_{\eta}^b(\eta^{2k}  \partial_\eta^{ n-m} \widehat{\varphi})
  \right\|_{L^2} \right)\\ \le & c(t)\sum_{k=0}^{j} \left( \left\| \mathcal
      \eta^{2k+b}  \partial_\eta^{ n-m}\widehat{\varphi}
  \right\|_{L^2} + \left\| \mathcal
    D_{\eta}^{b}(\eta^{2k} \partial_\eta^{ n-m} \widehat{\varphi})
  \right\|_{L^2} \right).
\end{aligned}
$$
Since, from Lemma \ref{interpolacion}
\begin{align*}
  \|\eta^{2k+b} \partial_\eta^{n -m} \widehat\varphi\|_{L^2} & \leq
  \|\langle \eta\rangle ^{2k+b} J_\eta^{n-m} \widehat\varphi \|_{L^2}
  \leq
  \|\langle \eta\rangle ^{m+b} J_\eta^{n-m} \widehat\varphi \|_{L^2} \\
  & \leq \|J_y^{n+b} \varphi\|_{L^2}^{\frac{m+b}{n+b}}  \|\langle
  \eta\rangle^{n} \varphi\|_{L^2}^{\frac{ n-m}{n+b}} \\
  & \leq c (\|\langle y\rangle ^{n+b}\varphi\|_{L^2}+\|J_y^{n+b} \varphi\|_{L^2})
\intertext{and}
  \left\| \mathcal D_{\eta}^{b}(\eta^{2k} \partial_\eta^{ n-m}
    \widehat{\varphi}) \right\|_{L^2} & \leq \| J_\eta^{b} \langle \eta\rangle
  ^{2k} J_\eta^{n-m} \widehat\varphi \|_{L^2}  \\
  & \leq \|J_y^{n-m+ 2k+b} \varphi\|_{L^2}^{\frac{2k}{2k+b}} \|\langle
  \eta\rangle^{2k +b } J_\eta^{n-m} \varphi\|_{L^2}^{\frac{ b}{2k+b}}
  \displaybreak[3]\\
  & \leq \|J_y^{n +b} \varphi\|_{L^2}^{\frac{2k}{2k+b}} \|\langle
  \eta\rangle^{m +b } J_\eta^{n-m} \varphi\|_{L^2}^{\frac{ b}{2k+b}}\\
  & \leq \|J_y^{n +b} \varphi\|_{L^2}^{(\frac{2k}{2k+b} + \frac{(n-m)
      b } {(n+b)(2k+b) } )} \|\langle \eta\rangle^{m +b } J_\eta^{n-m}
  \varphi \|_{L^2}^{ \frac{(m+b) b}{(n+b) (2k+b)} }\\
  & \leq c (\|\langle y\rangle ^{n+b}\varphi\|_{L^2}+\|J_y^{n+b}
  \varphi\|_{L^2}),
\end{align*}
we have
$$
\begin{aligned}
\| \mathcal D_{\eta}^b (\partial_\eta
^{m}F \partial_\eta^{n-m}\widehat{\varphi}) \|_{L^2} &\le
c (\|\langle y\rangle ^{n+b}\varphi\|_{L^2}+\|J_y^{n+b}
  \varphi\|_{L^2})\\ &\le c\|\varphi\|_{\mathcal{F}_{0,r}^{s_1,s_2}}
\end{aligned} 
$$
Then, from \eqref{eq:2.28}, it follows the proposition.
\end{proof}
From the two previous  propositions we obtain the following corollary
analogous to Corollary \ref{pesoxy}, in the same way that we obtained
this.
\begin{corol}\label{pesoxyr}
  Let $E$ be as in Corollary \ref{pesoenx}. If 
  $s_1$, $s_2 \in \re$, $0\le r_1< 5/2$ and $r_2 \ge 0$ with $s_2
  \geq \max(2r_1,r_2)$, we have
  \[\|E(t)\varphi\|_{\mathcal{F}_{r_1,r_2}^{s_1,s_2}} \leq
  c(t)\|\varphi\|_{\mathcal{F}_{r_1,r_2}^{s_1,s_2}},\] where
  $c(t)$ is an increasing continuous function in $t$.
\end{corol}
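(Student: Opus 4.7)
The plan is to reduce the bound directly to Propositions \ref{pesoenxr} and \ref{pesoenyr} by exploiting the identification
\[
\mathcal{F}_{r_1,r_2}^{s_1,s_2} \;=\; \mathcal{F}_{r_1,0}^{s_1,s_2} \cap \mathcal{F}_{0,r_2}^{s_1,s_2},
\]
whose defining norm satisfies, up to an absolute constant,
\[
\|f\|_{\mathcal{F}_{r_1,r_2}^{s_1,s_2}}^2 \;\simeq\; \|f\|_{s_1,s_2}^2 + \||x|^{r_1} f\|_{L^2}^2 + \||y|^{r_2} f\|_{L^2}^2.
\]
This is the exact analogue of how Corollary \ref{pesoxy} was obtained from Corollaries \ref{pesoenx} and \ref{pesoeny}, so no new ideas beyond bookkeeping are required.

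First I would note that the hypothesis $s_2\ge \max(2r_1,r_2)$ is precisely the conjunction of the hypotheses $s_2\ge 2r_1$ (required by Proposition \ref{pesoenxr}) and $s_2\ge r_2$ (required by Proposition \ref{pesoenyr}), while $0\le r_1 <5/2$ is exactly the range where Proposition \ref{pesoenxr} applies. Thus both propositions are simultaneously available for any $\varphi \in \mathcal{F}_{r_1,r_2}^{s_1,s_2}$.

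Next I would estimate each of the three ingredients of the norm separately. Since the symbol $F(t,\xi,\eta) = e^{b(\xi,\eta)t}$ satisfies $|F|\equiv 1$, the group $E(t)$ is an isometry on $H^{s_1,s_2}$, giving
\[
\|E(t)\varphi\|_{s_1,s_2} \;\le\; \|\varphi\|_{s_1,s_2}.
\]
For the $|x|^{r_1}$-weighted piece, Proposition \ref{pesoenxr} yields
\[
\||x|^{r_1} E(t)\varphi\|_{L^2} \;\le\; \|E(t)\varphi\|_{\mathcal{F}_{r_1,0}^{s_1,s_2}} \;\le\; C_1(t)\,\|\varphi\|_{\mathcal{F}_{r_1,0}^{s_1,s_2}},
\]
and for the $|y|^{r_2}$-weighted piece, Proposition \ref{pesoenyr} gives
\[
\||y|^{r_2} E(t)\varphi\|_{L^2} \;\le\; \|E(t)\varphi\|_{\mathcal{F}_{0,r_2}^{s_1,s_2}} \;\le\; C_2(t)\,\|\varphi\|_{\mathcal{F}_{0,r_2}^{s_1,s_2}},
\]
with $C_1,C_2$ increasing continuous in $t$.

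Finally, summing the squares and using the trivial bounds
\[
\|\varphi\|_{\mathcal{F}_{r_1,0}^{s_1,s_2}}^2,\; \|\varphi\|_{\mathcal{F}_{0,r_2}^{s_1,s_2}}^2 \;\le\; \|\varphi\|_{\mathcal{F}_{r_1,r_2}^{s_1,s_2}}^2,
\]
I would obtain
\[
\|E(t)\varphi\|_{\mathcal{F}_{r_1,r_2}^{s_1,s_2}}^2 \;\le\; \bigl(1+C_1(t)^2+C_2(t)^2\bigr)\,\|\varphi\|_{\mathcal{F}_{r_1,r_2}^{s_1,s_2}}^2,
\]
and setting $c(t):=\sqrt{1+C_1(t)^2+C_2(t)^2}$ (which is increasing and continuous in $t$ since $C_1,C_2$ are) gives the claim. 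There is no real obstacle here, since all the analytic work was already absorbed into the two preceding propositions; the only thing worth stating explicitly is the norm-equivalence for intersection spaces that justifies combining the two bounds additively.
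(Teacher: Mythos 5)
Your argument is correct and is precisely the paper's proof: the authors also obtain Corollary \ref{pesoxyr} by writing $\mathcal{F}_{r_1,r_2}^{s_1,s_2}=\mathcal{F}_{r_1,0}^{s_1,s_2}\cap\mathcal{F}_{0,r_2}^{s_1,s_2}$ and invoking Propositions \ref{pesoenxr} and \ref{pesoenyr} componentwise, exactly as was done for Corollary \ref{pesoxy}. Your write-up merely makes explicit the norm bookkeeping that the paper leaves implicit.
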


\subsection{Well-posedness of \eqref{eq:rBOZK}}
Thanks to the  discussion in the previous subsection we have the
following theorem.
\begin{teo}\label{pvireales}
  Let $s_1$ and $s_2$ be positive real numbers, and $r_1$ and $r_2$
  nonegative real numbers such that $r_1<\frac{5}{2}$, $s_2\ge \min(
  2r_1, r_2)$ and $\frac1{s_1} +\frac1{s_2} <2$. For all $\varphi \in
  \mathcal{F}_{r_1,r_2}^{s_1,s_2}$, there exists $T>0$, depending on
  $\|\varphi\|_{\mathcal{F}_{r_1,r_2}^{s_1,s_2}}$, and a unique $u \in
  C([0,T] ; \mathcal{F}_{r_1,r_2}^{s_1,s_2} )$ solution of the initial
  value problem associated to \eqref{eq:rBOZK}. \par Moreover, the map
  $\psi\mapsto v$, $v$ solution of \eqref{eq:rBOZK} with initial
  contition $\psi$, is continuous on the open set $
  \mathcal{F}_{r_1,r_2}^{s_1,s_2}$, containing $\varphi$, with values
  in $C([0,T] ; \mathcal{F}_{r_1,r_2}^{s_1,s_2} )$.
\end{teo}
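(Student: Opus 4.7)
The strategy is the classical contraction-mapping argument applied to the Duhamel formulation. First, applying $(1+b\mathscr{H}\partial_x)^{-1}$ to \eqref{eq:rBOZK} puts the equation in the form
$$u_t = Au + aB(u^n), \qquad u(0)=\varphi,$$
where $A=\partial_x(1+b\mathscr{H}\partial_x)^{-1}\partial_{yy}$ and $B=-\partial_x(1+b\mathscr{H}\partial_x)^{-1}$ are the operators appearing in Corollaries \ref{pesoenx} and \ref{acotop}. Since $A$ has purely imaginary symbol, the associated group $E(t)=e^{tA}$ is an $H^{s_1,s_2}$-isometry, and Duhamel's formula converts the Cauchy problem into the integral equation
$$u(t)=E(t)\varphi+a\int_0^tE(t-\tau)\,B(u^n(\tau))\,d\tau =:\Phi(u)(t).$$

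The plan is to run a fixed-point argument in the closed ball
$$X_T^M=\bigl\{u\in C([0,T];\mathcal{F}_{r_1,r_2}^{s_1,s_2})\,:\,\sup_{0\le t\le T}\|u(t)\|_{\mathcal{F}_{r_1,r_2}^{s_1,s_2}}\le M\bigr\},$$
with $M=2C(T)\|\varphi\|_{\mathcal{F}_{r_1,r_2}^{s_1,s_2}}$, where $C(T)$ is the increasing continuous function produced by Corollary \ref{pesoxyr} (the hypothesis $s_2\ge\max(2r_1,r_2)$ is what makes that corollary apply; I read the $\min$ in the statement as a typo for $\max$). Using Corollary \ref{pesoxyr} for $E(t)$, Corollary \ref{acotop} for $B$ (where $r_1<5/2$ is used essentially), and the Banach-algebra property from Corollary \ref{algban} (which requires $\tfrac{1}{s_1}+\tfrac{1}{s_2}<2$), I would estimate
$$\|\Phi(u)(t)\|_{\mathcal{F}_{r_1,r_2}^{s_1,s_2}}\le C(T)\|\varphi\|_{\mathcal{F}_{r_1,r_2}^{s_1,s_2}}+|a|\,C(T)\,\|B\|\int_0^T\|u(\tau)\|_{\mathcal{F}_{r_1,r_2}^{s_1,s_2}}^{n}\,d\tau,$$
so that, for $T$ sufficiently small depending only on $\|\varphi\|_{\mathcal{F}_{r_1,r_2}^{s_1,s_2}}$, $\Phi$ maps $X_T^M$ into itself. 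For the Lipschitz bound I factor $u^n-v^n=(u-v)\sum_{k=0}^{n-1}u^{k}v^{n-1-k}$ and apply the algebra property to obtain $\|u^n-v^n\|_{\mathcal{F}_{r_1,r_2}^{s_1,s_2}}\le n\,c^{n-1}M^{n-1}\|u-v\|_{\mathcal{F}_{r_1,r_2}^{s_1,s_2}}$. Combined with the same bounds on $E(t)$ and $B$, this yields a contraction constant proportional to $T$, and a further shrinking of $T$ produces a unique fixed point $u\in X_T^M$. Continuous dependence on the datum follows from applying the same two estimates to the difference of two solutions with neighbouring initial conditions $\varphi,\psi$ and running a Gronwall argument; this gives a local Lipschitz bound $\|u-v\|_{C([0,T];\mathcal{F}_{r_1,r_2}^{s_1,s_2})}\le K\|\varphi-\psi\|_{\mathcal{F}_{r_1,r_2}^{s_1,s_2}}$ on a neighbourhood of $\varphi$.

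The main obstacle I anticipate is not the nonlinear analysis, which becomes routine once Corollary \ref{algban} is available, but rather the accounting of constants: the weight-propagation factor $C(T)$ from Corollary \ref{pesoxyr} grows polynomially in $T$, and the operator norm of $B$ from Corollary \ref{acotop} is finite only because $r_1<5/2$. One has to balance these against the $T$ gained from the time integral so that the invariance of $X_T^M$ and the contraction inequality close simultaneously, and this is precisely what determines the time of existence as a function of $\|\varphi\|_{\mathcal{F}_{r_1,r_2}^{s_1,s_2}}$ and pins down the structural role of the hypothesis $r_1<5/2$.
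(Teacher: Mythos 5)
Your proposal is correct and follows essentially the same route as the paper: the same Duhamel formulation with $E(t)$ and $B=-\partial_x(1+\mathscr{H}\partial_x)^{-1}$, a contraction argument built on Corollaries \ref{algban}, \ref{acotop} and \ref{pesoxyr}, and Gronwall for continuous dependence; the only cosmetic difference is that the paper centers its complete metric space at the free evolution $E(t)\varphi$ rather than at the origin. Your reading of the $\min$ in the hypothesis as a typo for $\max$ is consistent with what Corollary \ref{pesoxyr} actually requires and with how the paper's own proof invokes it.
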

\begin{proof}
It is clear that \eqref{eq:rBOZK} is equivalent to the integral equation
\begin{equation}\label{ecuacionintegral}
u= E(t)\varphi +\int_0^t E(t- \tau) B(u^n(\tau)) \, d\tau,
\end{equation}
where
\begin{equation}\label{operadores}
  E(t)=e^{tB}\qquad\text{ and}\qquad B=- \partial_x
  (1+\mathscr{H}\partial_x)^{-1}\partial_{ y}^2. 
\end{equation}
So, we will show that this equation has solution in $C([0,T];
\mathcal{F}_{r_1,r_2}^{s_1,s_2} )$, for $T$ small enough. Let $\Phi$
be defined by
$$ \Phi u= E(t)\varphi +\int_0^t E(t- \tau) B(u^n(\tau)) \, d\tau,$$
for each function $u\in C([0,T] ; \mathcal{F}_{r_1,r_2}^{s_1,s_2}
)$. Let us see that, for some $T$, $\Phi$ is a contraction in the 
space
\begin{equation}
\mathcal{X}(T,M)= \{ u \in C([0,T];  \mathcal{F}_{r_1,r_2}^{s_1,s_2} ) \mid \|E(t)\varphi-u(t)\|_{\mathcal{F}_{r_1,r_2}^{s_1,s_2}} \leq M\},
\end{equation}
where $M$ is a arbitrary positive real number. It is clear that
$\mathcal{X}(T,M)$ is a complete metric space with the metric
$d_{T,M}$ defined by
\[d_{T,M}(u,v)= \sup \limits_{t \in [0,T]}
\|u(t)-v(t)\|_{\mathcal{F}_{r_1,r_2}^ {s_1,s_2}}.\] We will choose $T$ such
that $\Phi$ maps $\mathcal{X}(T,M)$ in itself. Thanks to Corollaries \ref{algban}, \ref{acotop} and \ref{pesoxyr}, for $u\in
\mathcal X(T,M)$, we have
\begin{align*}
  \|\Phi(u)(t)-E(t)\varphi\|_{\mathcal{F}_{r_1,r_2}^{s_1,s_2}} & = \left\|\int_0^t E(t-\tau)B(u^n(\tau)) \, d\tau \right\|_{\mathcal{F}_{r_1,r_2}^{s_1,s_2}} \\
  & \leq \int_0^t \|E(t-\tau)B(u^n(\tau))\|_{\mathcal{F}_{r_1,r_2}^{s_1,s_2}} \, d\tau \\
  & \leq \int_0^t c(t-\tau) \|u^n(\tau)\|_{\mathcal{F}_{r_1,r_2}^{s_1,s_2}} \, d\tau \\
  & \leq c(T) \int_0^t \|u(\tau)\|_{\mathcal{F}_{r_1,r_2}^{s_1,s_2}}^n \, d\tau \\
  & \leq c(T) \int_0^t (M+\|\varphi\|_{\mathcal{F}_{r_1,r_2}^{s_1,s_2}})^n \, d\tau   \\
  & \leq c(T)(M+\|\varphi\|_{\mathcal{F}_{r_1,r_2}^{s_1,s_2}})^n T,
\end{align*} 
where $c$ is an increasing continuous function.  Then, if we choose $T$
in such a way that
\[Tc(T) \leq \frac{M}{(M+\|\varphi\|_{\mathcal{F}_{r_1,r_2}^{s_1,s_2}})^n },\]
$\Phi(u) \in \mathcal{X}(T,M)$.
On the other hand, since
\begin{align*}
\|\Phi(u)(t)-\Phi(v)&(t)\|_{\mathcal{F}_{r_1,r_2}^{s_1,s_2}}  \leq
\int_0^t \|E(t-\tau)B(u^n (\tau))-v^n(\tau))\|_{\mathcal{F}_{r_1,r_2}^{s_1,s_2}} \, d \tau \\
& \leq c(T)\int_0^t \|u^n(\tau)-v^n(\tau)) \|_{\mathcal{F}_{r_1,r_2}^{s_1,s_2}} \, d \tau \\
& \leq \tilde c(T) \int_0^t \|u(\tau)-v(\tau)\|_{\mathcal{F}_{r_1,r_2}^{s_1,s_2}}
(\|u(\tau)\|_{\mathcal{F}_{r_1,r_2}^{s_1,s_2}}^{n-1} +\|v(\tau)\|_{\mathcal{F}_{r_1,r_2}^{s_1,s_2}}^{n-1}) \, d\tau \\
& \leq \tilde c(T) (M+\|\varphi \|_{\mathcal{F}_{r_1,r_2}^{s_1,s_2}} )^{n-1}T d_{T,M}(u,v)
\end{align*}
for some increasing continuous function $\tilde c$, if we take $T$
such that it also satisfies
\[\tilde c(T)(M+\|\varphi \|_{\mathcal{F}_{r_1,r_2}^{s_1,s_2}})^{n-1} T < 1,\]
we have, indeed, that $\Phi$ is a contraction in $\mathcal X(T,M)$.
Then, from the Banach fixed point theorem, there is a function $u \in
\mathcal{X}(T,M)$ solution to \eqref{ecuacionintegral}, and thereby
solution to \eqref{eq:rBOZK}.\par
Finally, if $u$ and $v$ are solutions to \eqref{eq:rBOZK} in the
interval $[0,T]$, we have \[\|u(t)-v(t)\|_{\mathcal{F}_{r_1,r_2}^{s_1,s_2}}\le
c(t)\|\varphi-\psi\| + \int_0^t
M^{n-1}c(t-\tau)\|u(\tau)-v(\tau)\|_{\mathcal{F}_{r_1,r_2}^{s_1,s_2}}
\, d\tau. \] From this last inequality and the Gronwall lemma it
follows the theorem.
\end{proof}
\subsection{Unique continuation of the solutions to \eqref{eq:rBOZK}}

Let us see now that if $r_1=5/2$ there is not persistence of the solutions to
\eqref{eq:rBOZK}. More precisely we have the next theorem.
\begin{teo}\label{continuacionunica}
  Suposse that the Theorem \ref{pvireales} conditions are satisfied
  for $s_1$, $s_2$, $r_1$ and $r_2$, with $s_2\ge 5$. Let $u \in C([0,T]
  ; \mathcal{F}_{r_1,r_2}^{s_1,s_2})$ be solution to \eqref{eq:rBOZK}
  such that $\int_\re \partial_y^2u(0,x,y)\, dx \geq 0$,
  for all $y\in \re$. If for two times $t_1=0 < t_2 < T$ it has that
  $u(t_j) \in \mathcal{F}_{ \frac{5}{2},r_2}^{s_1, s_2}$, $j=1,2$,
  then $u$ is identically zero.
\end{teo}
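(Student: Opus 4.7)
The plan is to transport the persistence hypothesis $u(t_2)\in\mathcal F_{5/2,r_2}^{s_1,s_2}$ to the Fourier side, identify a $\sgn(\xi)$-type jump that $\partial_\xi^2\hat u(t_2,\cdot,\eta)$ develops at $\xi=0$, force this jump to vanish using Theorem~\ref{lim}, and then combine the resulting identity with the sign hypothesis on $\partial_y^2u(0)$ to conclude $u\equiv 0$.

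First I would rewrite the integral equation~\eqref{ecuacionintegral} in Fourier variables as
$$\hat u(t_2,\xi,\eta)=F(t_2,\xi,\eta)\hat\varphi(\xi,\eta)-a\int_0^{t_2}F(t_2-\tau,\xi,\eta)\,M(\xi)\,\widehat{u^n}(\tau,\xi,\eta)\,d\tau,$$
with $F(t,\xi,\eta)=e^{it\xi\eta^2/(1+|\xi|)}$ and $M(\xi)=i\xi/(1+|\xi|)$, and apply $\partial_\xi^2$. From Lemma~\ref{derivadas1}, the only discontinuous piece produced in $\partial_\xi^2(F\hat\varphi)$ at $\xi=0$ is $-2it_2\sgn(\xi)\eta^2(1+|\xi|)^{-3}F\hat\varphi$, whose jump coefficient at $\xi=0$ is $-2it_2\eta^2\hat\varphi(0,\eta)$. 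For the nonlinear integrand, a short calculation gives $M(0)=0$, $M'(0)=i$ and $M''(\xi)=-2i\sgn(\xi)(1+|\xi|)^{-3}$, so only the piece $F\,M''\,\widehat{u^n}$ is discontinuous at $\xi=0$, contributing (after the $-a\int_0^{t_2}(\cdot)\,d\tau$) a jump coefficient $2ai\int_0^{t_2}\widehat{u^n}(\tau,0,\eta)\,d\tau$. Hence the total jump of $\partial_\xi^2\hat u(t_2,\cdot,\eta)$ at $\xi=0$ is $C(\eta)\sgn(\xi)$ with
$$C(\eta):=-2it_2\eta^2\hat\varphi(0,\eta)+2ai\int_0^{t_2}\widehat{u^n}(\tau,0,\eta)\,d\tau.$$
Since $|x|^{5/2}u(t_2)\in L^2$ yields $D_\xi^{1/2}(\partial_\xi^2\hat u(t_2,\cdot,\eta))\in L^2_\xi$ for a.e.~$\eta$, Theorem~\ref{lim} rules out any such jump, forcing $C(\eta)=0$ for a.e.~$\eta$.

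Next I would invert the Fourier transform in $\eta$. Setting $G(y):=\int_\re\varphi(x,y)\,dx$ and $H_\tau(y):=\int_\re u^n(\tau,x,y)\,dx$, and using $\eta^2\hat\varphi(0,\eta)=-\widehat{G''}(\eta)$, the identity $C\equiv 0$ becomes
$$-t_2\,G''(y)=a\int_0^{t_2}H_\tau(y)\,d\tau.$$
Because $|x|^{5/2}\varphi\in L^2$, Cauchy--Schwarz gives $G\in L^2(\re)$. The hypothesis $G''(y)=\int_\re\partial_y^2u(0,x,y)\,dx\ge 0$ makes $G'$ non-decreasing, so its limits at $\pm\infty$ must coincide (otherwise $G$ would grow linearly at infinity, contradicting $G\in L^2$); hence $G'\equiv\text{constant}$, $G''\equiv 0$, and since an affine $L^2$ function on $\re$ vanishes, $G\equiv 0$. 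In particular $\int_0^{t_2}H_\tau(y)\,d\tau=0$ for every $y$.

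The delicate part is closing the argument from these integral identities to $u\equiv 0$. When $n$ is even, $u^n\ge 0$ makes $H_\tau\ge 0$ pointwise, and the identity $\int_0^{t_2}H_\tau(y)\,d\tau=0$ forces $H_\tau\equiv 0$ a.e.\ in $(\tau,y)$, hence $u\equiv 0$. For odd $n$ the sign information on $u^n$ is lost and a supplementary argument is needed; the plan would be to combine the conservation law $\hat u(t,0,\eta)=\hat\varphi(0,\eta)$ (immediate from evaluating the Duhamel formula at $\xi=0$, using $M(0)=0$), which together with $G\equiv 0$ already gives $\int_\re u(t,x,y)\,dx\equiv 0$ for all $t,y$, with an iteration of the jump analysis either on the time-translated solution $\tilde u(\tau,x,y):=u(t_2-\tau,x,y)$ or on the higher Taylor moments $\partial_\xi^k\hat\varphi(0,\eta)$ at $\xi=0$, following the strategy of~\cite{FonsecaGui}. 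This last step is the main obstacle of the argument.
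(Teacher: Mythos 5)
Your proposal follows essentially the same route as the paper's proof: pass to the Duhamel formula on the Fourier side, observe via Lemma \ref{derivadas1} that the only obstruction to $D_\xi^{1/2}\partial_\xi^2\widehat{u}(t_2)$ lying in $L^2$ is the $\sgn(\xi)$-jump whose coefficient is exactly your $C(\eta)$ (the paper isolates the same two singular pieces, namely $C_3=-2it\sgn(\xi)\eta^2\widehat\varphi$ from the linear part and $A_5=-2i\sgn(\xi)(1+|\xi|)^{-3}\widehat v$ from the nonlinear part, and shows all remaining terms have $D_\xi^{1/2}$ in $L^2$), and then invoke Theorem \ref{lim} to kill the jump. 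The one place where you genuinely diverge is the endgame. The paper reads the resulting identity directly as a sum of two pointwise nonnegative quantities, $2t_2\int_\re\partial_y^2\varphi\,dx$ and $\int_0^{t_2}\int_\re u^2\,dx\,d\tau$, and concludes both vanish; note that it silently sets $v=u^2$, so it really only treats even $n$ (in effect $n=2$). You instead first prove $G''\equiv 0$ from $G''\ge 0$ and $G\in L^2$ by the convexity argument, and only then deduce $\int_0^{t_2}H_\tau\,d\tau=0$; this is marginally more robust, since it does not require the two terms in the identity to carry the same sign (in particular it is insensitive to the sign of the coefficient $a$, which the paper drops altogether), but it still needs $u^n\ge 0$, i.e.\ $n$ even, to pass from $\int_0^{t_2}\int_\re u^n\,dx\,d\tau=0$ to $u\equiv 0$. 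The odd-$n$ gap you honestly flag at the end is therefore not a defect of your write-up relative to the paper: the paper's own proof has exactly the same limitation and does not address it. Modulo that shared restriction, your argument is complete and matches the paper's.
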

\begin{proof}
  Let $u \in C([0,T] : \mathcal{F}_{2,r_2}^{s_1,s_2})$ be as in the
  statement of the theorem. Let $\varphi(x,y)=u(0,x,y)$ and
  $v=u^2$.\par
  First we will carefully examine the terms of the right side of the
  equation obtained by multiplying $x^{2}$ in both sides of
  \eqref{ecuacionintegral}. \par For the first term that appear there,
  from Lemma \ref{derivadas1}, we have
\begin{align*}
  \partial_{\xi}^2(F\widehat{\varphi}) & = F \left [ \frac{-2it \eta^2 \sgn(\xi)}{(1+|\xi|)^3}
    \widehat{\varphi}- \frac{t^2 \eta^4}{( 1+|\xi|)^4} \widehat{
      \varphi} + \frac{2it \eta^2}{(1+|\xi|)^2} \partial_{\xi}
    \widehat{ \varphi}+ \partial_{\xi}^2 \widehat{\varphi}\right ]\\
  &= B_1+B_2+B_3+B_4,
\end{align*}
where each $B_i$, $i=1,\cdots,4$, are the summands obtained by
distributing $F$ in the sum in brackets on the right side of the
equation above. Since the estimates (\ref{estimativa1}),
(\ref{estimativa2}) and (\ref{estimativa3}) are valid for $0<b<1$,
$D_{\xi}^{1/2}B_i\in L^2(\re^2)$, $i=2,$ 3 and 4. Now,
for $B_1$ we have
\begin{equation}\label{c2}
\begin{split}
  B_1=&\frac{-2it  \eta^2}{(1+|\xi|)^3}\widehat{ \varphi}\sgn(\xi)(F-1)
  - {2it \sgn( \xi)\eta^2 }\left(\frac 1{(1+|\xi|)^3}-1 \right) \widehat{
    \varphi} -\\ &- 2it \sgn( \xi)\eta^2 \widehat{\varphi}  \\
   =& C_1+C_2 +C_3 
\end{split}
\end{equation}
Reasoning in the same way as we obtained \eqref{estimativa1} it shows
that $D_{\xi}^{\frac{1}{2}}C_2\in L^2(\re^2)$. It is slightly more
difficult to see that $D_{ \xi}^{ \frac{1}{2}} C_1 \in L^2(\re^2)$,
but thanks to Corollary \ref{corol:2.19}, we can do this with the
same argument.  Then,
\begin{equation}\label{eq:3.22}
D^{1/2}_\xi\left( B_1- 2it \eta^2
  \sgn(\xi) \widehat{\varphi}\right) \in
L^2(\re^2).
\end{equation}
So, we get
\begin{equation}\label{eq:3.23}
D^{1/2}_\xi \left((x^2 E(t)
  \varphi)\,\widehat{ } + 2it \eta^2  \sgn(\xi)
  \widehat{\varphi}\right) \in L^2(\re^2). 
\end{equation}
\par
To examine the term of the integral, we will examine the second
derivative with respect to $\xi$ of the Fourier transform of the
expression that appears inside the integral. For this purpose, we have
\begin{align*}
  \partial_\xi^2\left(\frac{i\xi F\widehat v}{1+|\xi|} \right)=&
  F(t,\xi,\eta)  \biggl ( \frac{2t|\xi|\eta^2}{(1+|\xi|)^4}
  \widehat{v}- \frac{2t\eta^2}{(1+|\xi| )^4}
  \widehat{v}-\frac{it^2\eta^4}{(1+| \xi|)^5} \xi\widehat{v}- \\ &- 
  \frac{2t\xi \eta^2}{(1+|\xi|)^3} \partial_{\xi} \widehat{v}-
  \frac{2i\sgn(\xi)}{(1+|\xi|)^3}  \widehat{v} + \frac{2i}{(1+|\xi|)^2} \partial_{\xi} \widehat{v}+
  \frac{i\xi}{1+|\xi|}   \partial_{\xi}^2\widehat{v} \biggr )\\
  = &A_1+ \cdots+ A_7
\end{align*}
Proceeding as before, we can see that $D_{\xi}^{1/2}(A_i) \in
C([0,T]: L^2( \re^2))$, for $i\ne 5$, and that 
\begin{equation}\label{mm1}
  \left ( D_{\xi}^{1/2} (A_5 -2i\sgn(\xi) \widehat v ) \right) \in C([0,T]: L^{2}(\re^2)).  
\end{equation}
 Therefore, 
\begin{equation}\label{mm}
 D_{\xi}^{1/2}\left ( \left(x^2\int_0^t E(t-\tau)B(u^n(\tau)) \, d\tau
  \right)\, \widehat{ } +2i\sgn(\xi) \int_0^t\widehat v d\tau \right) \in C([0,T]: L^{2}(\re^2)).  
\end{equation}
\par
So, from \eqref{eq:3.23} and \eqref{mm}, we obtain
\begin{equation}\label{eq:3.24}
 D_{\xi}^{1/2}\left ( \widehat{x^2u(t)} + 2it \eta^2 \sgn(\xi)
  \widehat{\varphi} +
   2i\sgn( \xi) \int_0^t\widehat v d\tau \right) \in  L^{2}(\re^2),  
\end{equation}
for all $t \in [0, T]$. Since $u(t_2) \in \mathcal{F}_{\frac{5}{2},r_2}^{s_1,s_2}$ 
\[D_{\xi}^{\frac{1}{2}} \left[  \sgn(\xi) \left ( 2t_2
    \eta^2 \widehat{ \varphi} + \int_{0}^{t_2} \widehat{ v} \, d \tau
  \right ) \right] \in L^2(\re^2).\]
In particular,
\[
D_{\xi}^{\frac{1}{2}} \left[  \sgn(\xi)  \left ( 2t_2
     \widehat{\partial_y^2  \varphi}^x + \int_{0}^{t_2} \widehat{ v}^x \, d \tau
  \right ) \right] \in L^2(\re)
\]
(in $\xi$) for almost every $y\in\re$, where $\widehat{\phantom a }^x$
denotes the Fourier transform only on $x$. By the Theorem \ref{lim}
\[  2t_2\widehat{\partial_y^2 \varphi}^x(0,y) + \int_{0}^{t_2}
  \widehat{v}^x(\tau, 0, y) \, d \tau=0, \] 
for almost every $y\in\re$. In view that
\[\widehat{\partial_y^2 \varphi}^x (0,y) = \int_{\re}
\partial_y^2 \varphi( x, y) \, dx\ge 0\]
for almost every $y$,  
\[ \int_{0}^{t_2} \widehat{ v}^x \, d \tau =\int_{0}^{t_2} \int_{-\infty}^{\infty} u^2(\tau, x,y) dx  \, d
\tau=0,\]
for almost every $y$. In other words, $u \equiv 0$. 
\end{proof}
\bibliographystyle{acm}
\bibliography{mybiblio}

\end{document}